\newcommand{\ZZ}{\mathbb Z}
\newcommand{\NN}{\mathbb N}
\newcommand{\LL}{\mathbb L}
\newcommand{\cpt}{\mathbb K}
\newcommand{\tC}{\mathrm{C}}
\newcommand{\cA}{\mathcal{A}}
\newcommand{\cC}{\mathcal{C}}
\newcommand{\cD}{\mathcal{D}}
\newcommand{\cF}{\mathcal{F}}
\newcommand{\cH}{\mathcal{H}}
\newcommand{\cL}{\mathcal{L}}
\newcommand{\cM}{\mathcal{M}}
\newcommand{\cN}{\mathcal{N}}
\newcommand{\cR}{\mathcal{R}}
\newcommand{\cS}{\mathcal{S}}
\newcommand{\cT}{\mathcal{T}}
\newcommand{\cW}{\mathcal{W}}
\def\map{\rightarrow}
\DeclareMathOperator{\precolim}{colim}
\def\colim{\mathop{\precolim}}
\newcommand{\beq}{\begin{eqnarray}}
\newcommand{\beqn}{\begin{eqnarray*}}
\newcommand{\eeq}{\end{eqnarray}}
\newcommand{\eeqn}{\end{eqnarray*}}
\def\H{\mathrm{H}}
\def\ker{\mathrm{ker}}
\def\cone{\mathrm{cone}}
\def\Sing{\mathrm{Sing}}
\def\fib{\mathrm{fib}}
\def\hfib{\mathrm{hfib}}
\def\cofib{\mathrm{cofib}}
\def\Map{\mathrm{Map}}
\def\Hom{\mathrm{Hom}}
\def\Top{\mathrm{Top}}
\def\Ab{\mathtt{Ab}}
\def\all{\mathrm{all}}
\def\fin{\mathrm{fin}}
\def\Bon{\mathrm{Bon}}
\def\Ne{\mathtt{N}}
\def\Fun{\mathtt{Fun}}
\def\Catfinc{\mathtt{Cat_*^{fincolim}}}
\def\Funfinc{\mathtt{Fun_*^{fincolim}}}
\def\Catfinl{\mathtt{Cat_*^{finlim}}}
\def\Funfinl{\mathtt{Fun_*^{finlim}}}
\def\Catex{\mathtt{Cat^{Ex}}}
\def\Funex{\mathtt{Fun^{Ex}}}
\def\Prl{\mathtt{Pr^L_*}}
\def\Prr{\mathtt{Pr^R_*}}
\def\Prlex{\mathtt{Pr^L_{Ex}}}
\def\Prrex{\mathtt{Pr^R_{Ex}}}
\def\Funl{\mathtt{Fun^L}}
\def\Funr{\mathtt{Fun^R}}
\def\Catinfty{\mathtt{Cat^{\infty}}}
\def\LPK{\mathbb{L}\pi_K}
\def\Cgen{\mathtt{C^*}}
\def\id{\mathrm{id}}
\def\Csep{\mathtt{SC^*}}
\def\Csepinf{\mathtt{SC^*_\infty}}
\def\KK{\mathrm{KK}}
\def\E{\mathrm{E}}
\def\K{\mathrm{K}}
\def\op{\mathrm{op}}
\def\Ind{\mathrm{Ind}}
\def\Pro{\mathrm{Pro}}
\def\iNS{\mathtt{N}\mathcal{S_*}}
\def\Set{\mathrm{Set}}
\def\TPro{\mathrm{TPro}}
\def\SpN{\mathrm{Sp}^\mathbb{N}}
\def\Sp{\mathrm{Sp}}
\def\Ho{\mathrm{Ho}}
\def\BKK{\mathtt{BKK}}
\def\pro{\mathrm{pro}}
\def\ind{\mathrm{ind}}
\def\KKP{\mathtt{KK^{pro}}}
\def\KKI{\mathtt{KK^{ind}}}
\def\NSH{\mathtt{NSH}}
\def\iNS{\mathtt{N}{\mathcal{S_*}}}
\def\SW{\mathrm{SW}}
\def\sC{\text{$\sigma$-$C^*$}}
\def\skk{\text{$\sigma$-$\mathrm{kk}$}}
\theoremstyle{definition}
\newtheorem{thm}{Theorem}[section]
\newtheorem{lem}[thm]{Lemma}
\newtheorem{prop}[thm]{Proposition}
\newtheorem{cor}[thm]{Corollary}
\newtheorem{defn}[thm]{Definition}
\newtheorem{rem}[thm]{Remark}
\begin{document}

\title{Model structure on projective systems of $C^*$-algebras and bivariant homology theories}

\author[Barnea]{Ilan Barnea}
\address{Mathematical Institute, University of Muenster, Einsteinstrasse 62, 48149 Muenster, Germany.} \email{ilanbarnea770@gmail.com}

\author[Joachim]{Michael Joachim}
\address{Mathematical Institute, University of Muenster, Einsteinstrasse 62, 48149 Muenster, Germany.}
\email{joachim@math.uni-muenster.de}

\author[Mahanta]{Snigdhayan Mahanta}
\address{Fakult{\"a}t f{\"u}r Mathematik, Universit{\"a}t Regensburg, 93040 Regensburg, Germany.}
\email{snigdhayan.mahanta@mathematik.uni-regensburg.de}

\subjclass[2010]{46L85, 46L80, 18E30, 18G55, 55P42, 55N15}
\keywords{Pro-category, model category, $\infty$-category, triangulated category, bivariant homology, $\KK$-theory, $C^*$-algebra}
\thanks{The first author was supported by the Alexander von Humboldt Foundation (Humboldt Professorship of Michael Weiss).}
\thanks{The third author was supported by the Deutsche Forschungsgemeinschaft (SFB 878 and SFB 1085), ERC through AdG 267079, and the Alexander von Humboldt Foundation (Humboldt Professorship of Michael Weiss).}

\begin{abstract}
Using the machinery of weak fibration categories due to Schlank and the first author, we construct a convenient model structure on the pro-category of separable $C^*$-algebras $\mathrm{Pro}(\mathtt{SC^*})$. The opposite of this model category models the $\infty$-category of pointed noncommutative spaces $\mathtt{N}\mathcal{S_*}$ defined by the third author. Our model structure on $\mathrm{Pro}(\mathtt{SC^*})$ extends the well-known category of fibrant objects structure on $\mathtt{SC^*}$. We show that the pro-category $\mathrm{Pro}(\mathtt{SC^*})$ also contains, as a full coreflective subcategory, the category of pro-$C^*$-algebras that are cofiltered limits of separable $C^*$-algebras. By stabilizing our model category we produce a general model categorical formalism for triangulated and bivariant homology theories of $C^*$-algebras (or, more generally, that of pointed noncommutative spaces), whose stable $\infty$-categorical counterparts were constructed earlier by the third author. Finally, we use our model structure to develop a bivariant $\mathrm{K}$-theory for all projective systems of separable $C^*$-algebras generalizing the construction of Bonkat and show that our theory naturally agrees with that of Bonkat under some reasonable assumptions.
\end{abstract}

\maketitle

\tableofcontents

\section{Introduction}

The Gel'fand--Na{\u{\i}}mark correspondence implies that the category of pointed compact Hausdorff (metrizable) spaces with pointed continuous maps is equivalent to the opposite category of commutative (separable) $C^*$-algebras with $*$-homomorphisms. In the realm of noncommutative geometry {\`a} la Connes, the category of \emph{all} (or \emph{separable}) $C^*$-algebras constitutes the basic setup. Let $\Cgen$ (resp. $\Csep$) denote the category of all (resp. separable) $C^*$-algebras with $*$-homomorphisms. It is natural to regard its opposite category as the category of \emph{noncommutative} pointed compact and Hausdorff (resp. metrizable) spaces. This category has been studied using tools from algebraic topology for a very long time (see for instance, \cite{KasEll}, \cite{RosNCT}, \cite{SchTop3}, \cite{CunKK}, \cite{ConBook}, \cite{Uuye}). In particular, there is a natural notion of homotopy that enables us to define homotopy equivalences in this context.

Quillen introduced model categories in \cite{Quillen} that provide a very general context in which it is possible to set up the basic machinery of homotopy theory. Thus an important question that arises is whether there exists a natural model structure on $\Cgen$ or $\Csep$; preferably one that also models the homotopy theory induced by the homotopy equivalences. This question was explicitly raised in Hovey's book  \cite[Problem 8.4]{HovBook}. It is not possible to build such a model structure directly on $\Cgen$ or $\Csep$ because of the following argument of Andersen-Grodal \cite[Corollary 4.7]{AndGro} (see also \cite{Phi2,Phi}):
We know how to construct the analogue of the suspension functor
$$\Sigma:\Ho(\Cgen^{\op})\map\Ho(\Cgen^{\op}).$$
Namely, if $A\in\Cgen^\op$, then
$$\Sigma A:=S^1\wedge A$$
is just the separable $C^*$-algebra of \emph{pointed} continuous maps from $S^1$ to $(A,0)$. However, this functor does not have a right adjoint $\Omega$. An intuitive reason for this is the inherent compactness of the objects in $\Cgen$, since the functor $\Omega$ can take a compact space to a non-compact one (for example, $\Omega S^1\simeq \mathbb{Z}$). We thus need to extend the category $\Cgen$ to include non-compact noncommutative spaces in order to put a model structure on it. For practical applications it is often sufficient to restrict one's attention to separable $C^*$-algebras. Thus keeping in mind the contravariant nature of the Gel'fand--Na{\u{\i}}mark duality, we formulate the following problem: Find a category $\cD$, that contains $\Csep$ as a full subcategory, and construct a model structure on $\cD$ such that

\begin{enumerate}[(i)]
 \item The inclusion functor $\Csep\map\cD$ sends homotopy equivalences to weak equivalences and the resulting map $\Ho\Csep\map\Ho\cD$ is fully faithful.

 \item The category $\cD$ is \emph{as close to $\Csep$ as possible}, preferably already known and studied in $C^*$-algebra theory.

 \item The model category $\cD$ is simplicial, proper and cocombinatorial (i.e., the opposite model category is combinatorial).
\end{enumerate}
The first item codifies the requirement that one must build a homotopy theory for the prevalent notion in the literature. The second item takes into account the requirement that there should be minimal deviation from the well established theory of $C^*$-algebras. The third item stipulates that the model category possess features that facilitate homotopy theoretic constructions therein (see Appendix \ref{a:model} for more detail).

We are aware of the following different model categories in the context of $C^*$-algebras (the authors apologise for any omission due to ignorance):
\begin{enumerate}
 \item The homotopy theory of cubical $C^*$-spaces by {\O}stv{\ae}r \cite{Ost},

 \item The model structure on $\nu$-sequentially complete l.m.c-$C^*$-algebras by Joachim--Johnson \cite{JoaJoh},

 \item The model category (or the $\infty$-category) of pointed noncommutative spaces by the third author \cite{MahNSH},

 \item The Morita homotopy theory of $C^*$-categories by Dell'Ambrogio--Tabuada \cite{DelTab}, and

 \item The operadic model structure on the topos $\mathcal{P}(\mathtt{SC^*_{un}}^\op)$ by the third author, where $\mathtt{SC^*_{un}}$ denotes the category of nonzero separable and unital $C^*$-algebras with unital $*$-homomorphisms \cite{MahDrawing}.
\end{enumerate}

In item (1), {\O}stv{\ae}r constructs a model structure on cubical set valued presheaves on $\Csep$. He begins with the projective model structure, and then the appropriate model category is obtained by successive Bousfield localizations. The end result has a flavour of the motivic (unstable) model category. The underlying category of this model category is not so well known in the theory of $C^*$-algebras and is much bigger than the candidate that we put forward. Thus this model category does not satisfy the second criterion above.

The approach in item (2) relies on the quasi-homomorphism picture for $\KK$-theory due to Cuntz \cite{CunKK} to build a model category, whose homotopy category contains Kasparov $\KK$-category fully faithfully. The enlargement of the category of $C^*$-algebras is carefully chosen by the authors in \cite{JoaJoh} so that it permits the small object argument leading to the construction of a cofibrantly generated model structure. Evidently it does not satisfy the first criterion mentioned above. The same comment applies to the approaches in items (4) and (5). Actually the model category of item (5) acts as a bridge between dendroidal sets and noncommutative spaces; more precisely, it acts as a bridge only at the level of underlying $\infty$-categories of $\infty$-operads and noncommutative spaces.

Before turning our attention to item (3) let us mention that the most straightforward way that extends the category of $C^*$-algebras to include non-compact noncommutative spaces is to consider the classical notion of pro-$C^*$-algebras (\cite{Phi,Phi2}).
These are topological $*$-algebras that are cofiltered limits of $C^*$-algebras (in the category of topological $*$-algebras). Commutative unital pro-$C^*$-algebras roughly correspond to completely Hausdorff compactly generated spaces (strictly speaking, one should consider completely Hausdorff \textbf{quasitopological} spaces). These objects are very close to $C^*$-algebras (in particular, they are also topological $*$-algebras), and were studied in $C^*$-algebra theory (so they certainly satisfy the second criterion above). There is also a notion of homotopy equivalence between pro-$C^*$-algebras, so that homotopy equivalences are natural candidates for weak equivalences. Hence if we could define a model structure with these weak equivalences, the first criterion above would also be satisfied. This attempted model structure would be similar to the Str{\o}m model structure on topological spaces \cite{Strom}, where the weak equivalences are the homotopy equivalences, and it is quite conceivable that Str{\o}m's construction would generalize to the category of pro-$C^*$-algebras. However, it is very likely that the resulting model structure would fail to be cocombinatorial, and thus violate criterion three above, much like the Str{\o}m model structure  \cite[Remark 4.7]{Raptis}.

Let us now explain how our article complements the approach of item number (3). Being an active area of research, the theory of $\infty$-categories is considered nowadays to be the most appropriate and conceptual environment for using homotopy theoretic tools in a generalized context. There are two natural ways of considering $\Csep$ as an $\infty$-category:
\begin{enumerate}
\item We can consider $\Csep$ as a topologically enriched category, where for every $A,B\in\Csep$ we endow the set of $*$-homomorphisms $\Csep(A,B)$ with the \emph{point norm topology}. Then we can take topological nerve of this topological category as in \cite[Section 1.1.5]{Lur}. This approach was taken by the third author in \cite{MahNSH}.
\item We can consider $\Csep$ as a relative category, with the weak equivalences given by the homotopy equivalences. Then we can take $\infty$-localization of this relative category. (We refer the reader to Appendix \ref{ss:infinity} for  the definition of a relative category and the $\infty$-localization of a relative category.) This relative category was considered in \cite{AndGro,SchTop3,Uuye}.
\end{enumerate}
We are going to show in Proposition \ref{p:equiv_infinity} that these two ways are equivalent. Let us denote by $\Csepinf$ the $\infty$-category obtained by either of the two equivalent ways above. The $\infty$-category $\Csepinf$ is not a very convenient one from an $\infty$-categorical perspective, since it does not permit many natural constructions. For example, while $\Csepinf$ admits finite $\infty$-limits  \cite[Proposition 2.7]{MahNSH}, it does not have finite $\infty$-colimits. Indeed, if it did possess finite $\infty$-colimits, one could define an adjoint pair of $\infty$-categories, as in Appendix \ref{s:stab_infinity}:
$$\Sigma_{\Csepinf}:\Csepinf\rightleftarrows \Csepinf:\Omega_{\Csepinf}.$$
Being an adjoint pair of $\infty$-categories, it would descend to an adjoint pair on their homotopy categories, thereby contradicting the argument of Andersen-Grodal mentioned above. This is why it is desirable to embed $\Csepinf$ in a bigger $\infty$-category, which is complete and cocomplete in an $\infty$-categorical sense. One of the most convenient types of $\infty$-categories is that of \emph{presentable $\infty$-categories}, or even more particularly, \emph{compactly generated $\infty$-categories} (see \cite[Chapter 5]{Lur}). There is a very natural procedure to embed $\Csepinf^\op$ in a compactly generated $\infty$-category, viz., since $\Csepinf^\op$ admits finite colimits, one may simply take its $\infty$-categorical ind-completion $\Ind(\Csepinf^\op)$. This is a very elegant solution, since $\Ind(\Csepinf^\op)$ is generated by the objects in $\Csepinf^\op$, which become compact inside it. The $\infty$-category $\Ind(\Csepinf^\op)$ was called the $\infty$-category of pointed noncommutative spaces (that are not necessarily compact) by the third author and denoted by $\iNS$ in \cite{MahNSH}.

While the theory of $\infty$-categories is very conceptual and enables us to prove theorems using universal properties, when it comes to concrete calculations it is in many cases quite abstract. For this purpose it is beneficial to have a convenient model structure that models the $\infty$-category of pointed noncommutative spaces (see Appendix \ref{ss:infinity} for the exact meaning of this).
This leads us to the model structure constructed in this paper, which fulfils this requirement and also seems to satisfy all the above-mentioned criteria (cf. Remark \ref{ModelComparison} below).

In this paper we construct a model structure on the category of \emph{projective systems} of separable $C^*$-algebras, which we denote $\Pro(\Csep)$. This is done in Theorem \ref{t:proC_main}, where it is also shown that this model structure satisfies the third criterion above. The construction of our model structure is based on a general method for constructing model structures on pro-categories that was developed by T. Schlank and the first author in \cite{BarSch1,BarSch2}. In these papers the concept of a \emph{weak fibration category} was introduced. A weak fibration structure on a category is much weaker than a model structure. It is shown in \cite{BarSch1,BarSch2} that a small weak fibration structure on a category naturally induces a model structure on its pro-category, provided the induced weak equivalences satisfy the two out of three property. The verification of this two out of three property is usually not an easy task. In order to show this in our case we apply a result proved by the first author in \cite{Bar} that gives sufficient intrinsic conditions on a weak fibration category for this two out of three property to hold.

A weak fibration category is a triple $(\cC,\cW,\cF)$ consisting of a category $\cC$ and two subcategories $\cW$ and $\cF$, called weak equivalences and fibrations, satisfying certain axioms (weaker than those of a model category). This notion is closely related to Brown's notion of a \emph{category of fibrant objects} \cite{Bro} and Baues's notion of a fibration category \cite{Bau}, which were introduced as more flexible structures than a model category that permit abstract homotopy theory. Andersen--Grodal defined a structure of a Baues fibration category on $\Csep$ in their unpublished paper \cite{AndGro}. Uuye later gave a different proof \cite{Uuye}. Both are building upon the earlier work of Schochet \cite{SchTop3}. We explain why this construction also constitutes a weak fibration structure on $\Csep$, and use it as a starting point for constructing our model structure on $\Pro(\Csep)$, as explained in the previous paragraph. Thus we extend the fibration structure on $\Csep$ to a much more powerful model structure on $\Pro(\Csep)$.

The category of projective systems (as well as inductive systems and some other diagram categories) of $C^*$-algebras has already been studied in the literature on $C^*$-algebras. For instance, it was considered by Bonkat \cite{Bon} with applications to bivariant $\K$-theory and by Puschnigg \cite{PusHL} and Meyer \cite{MeyCycHom} with applications to bivariant cyclic homology theories. In combination with Kirchberg's techniques, diagrams of (separable) $C^*$-algebras have since then been used effectively in various classification problems (see for instance, \cite{EilRes,EilResRui,MeyNesFK}).

Moreover, we are also able to build a bridge between the objects of our model category and the classical notion of pro-$C^*$-algebras mentioned above. In particular, we show in Proposition \ref{p:pro adjoint} that the underlying category of our model category contains, as a full coreflective subcategory, a very large category of pro-$C^*$-algebras (namely, those that are cofiltered limits of \textbf{separable} $C^*$-algebras). Thus we can say that our model structure satisfies the second criterion above.

We further show in Proposition \ref{p:noncomm_space} that the opposite category of our model category models the $\infty$-category of pointed noncommutative spaces $\Ne\cS_*$ described above. We do this using a general result proved by the first author, Y. Harpaz and G. Horel in \cite{BarHarHor}, which connects the model structure on a pro-category defined in \cite{BarSch1,BarSch2} with the $\infty$-categorical pro-construction. A direct consequence of this is that our model structure also satisfies the first criterion above.

\begin{rem} \label{ModelComparison}
Since the $\infty$-category $\Ne\cS_*$ is presentable, there is a general construction giving a combinatorial simplicial model category $\cM$, that models $\Ne\cS_*$ (see \cite[Proposition A.3.7.6]{Lur}). However, the underlying category of the model category produced by this general construction is much bigger than the one that we construct here. Indeed, the underlying category of $\cM$ is the category of simplicial presheaves on a small simplicial category containing $\Csep^{\op}$; whereas the (opposite of the) underlying category of our model structure can be realised as a \textbf{full subcategory} of the category of \textbf{usual} (set valued) presheaves on $\Csep^{\op}$. From the viewpoint of applications to classification problems and computations of various bivariant homology theories the simplicity and convenience of our new model structure is quite significant.
\end{rem}

After constructing the $\infty$-category of pointed noncommutative spaces $\Ne\cS_*$ as a starting point, the third author constructed in \cite{MahNSH} several bivariant homology theories on $\Ne\cS_*$ using $\infty$-categorical tools such as \emph{stabilization} and \emph{localization}. These theories extend the applicability of some known theories on the category of separable $C^*$-algebras, like $\K$-theory and noncommutative stable homotopy theory. Using our model structure, that models pointed noncommutative spaces by projective systems of separable $C^*$-algebras, all these homology theories become homology theories for projective systems of separable $C^*$-algebras. Our constructions also develop a parallel world of stable model categories that model the stable $\infty$-categories constructed by the third author in \cite{MahNSH,MahNSp,MahColoc} (see Proposition \ref{p:homology_main_model}).

In particular, using the general construction mentioned above, we obtain a bivariant $\K$-theory for projective systems of separable $C^*$-algebras. In \cite{Bon}, Bonkat also constructed a bivariant $\K$-theory for certain types of projective systems of separable $C^*$-algebras, using analytic tools extending the Kasparov bimodule picture. In Theorem \ref{t:main Bonkat}, we use our model structure to show that in certain cases, our bivariant $\K$-theory agrees with Bonkat's construction. However, note that our bivariant $\K$-theory applies to \emph{all} projective systems of separable $C^*$-algebras, while Bonkat's construction only applies to projective systems that have surjective connecting homomorphisms and admit a countable cofinal subsystem. Furthermore we show that our $\K$-theory has better formal properties (see Theorems \ref{t:LPK satisfy}, \ref{t:Our lim1} and \ref{t:Our lim}). Further applications of the framework developed in this article will appear elsewhere.

{\bf Overview of the paper.} In Section \ref{s:prelim} we review some of the necessary background on pro-categories and homotopy theory in pro-categories. In particular, we recall the definition of the pro-category of a general category, as well as some related theory. In the homotopical part, we recall the definition of a simplicial weak fibration category and state Theorem \ref{t:tool_main}, which is the main tool for constructing our model structure. We end the section by considering the relation between the model structure on a pro-category defined in Theorem \ref{t:tool_main} with the $\infty$-categorical pro-construction.

In Section \ref{s:model C} we construct our model structure on the category of projective systems of separable $C^*$-algebras. We begin in Subsection \ref{ss:SC WFC} by defining a simplicial weak fibration structure on the category $\Csep$ of separable $C^*$-algebras (see Propositions \ref{p:SC_WFC} and \ref{p:simplicial}). In Subsection \ref{ss:model pro C} we use the results of the previous subsection and Theorem \ref{t:tool_main} to construct our model structure on $\Pro(\Csep)$ (see Theorem \ref{t:proC_main}). In Subsection \ref{ss:relation infty}, we begin by showing that the two ways to look at $\Csep$ as an $\infty$-category mentioned above are equivalent (see Proposition \ref{p:equiv_infinity}). We then deduce in Propositions \ref{p:noncomm_space} that the underlying $\infty$-category of our model structure is naturally equivalent to the opposite $\infty$-category of pointed noncommutative spaces defined in \cite{MahNSH}. We also obtain a stable version of this last result in Proposition \ref{p:N_spectra}. We end this section with Subsection \ref{s:connect}, in which we connect the underlying category of our model structure on $\Pro(\Csep)$ with the more classical category of pro-$C^*$-algebras (\cite{Phi},\cite{Phi2}). More precisely, we show in Proposition \ref{p:pro adjoint} that $\Pro(\Csep)$ contains, as a full coreflective subcategory, a very large category of pro-$C^*$-algebras (namely, those that are cofiltered limits of \textbf{separable} $C^*$-algebras).

In Section \ref{s:homology} we consider bivariant homology theories on projective systems of separable $C^*$-algebras. We begin with Subsection \ref{ss:define homology}, in which we define the notion of a triangulated homology theory on a pointed cocomplete $\infty$-category. In Subsection \ref{ss:THT} we
recall a construction defined by the third author in \cite{MahNSH}, which associates a triangulated homology theory on the $\infty$-category of pointed noncommutative spaces to any set of morphisms in $\Csep$ (see Theorem \ref{p:homology_main}). For any set of morphisms in $\Csep$, by taking the opposite category and using our model structure, we get a bivariant homology theory which is applicable to all projective systems of separable $C^*$-algebras. We then transform this construction, which uses the language of $\infty$-categories, to the world of model categories (see Theorem \ref{p:homology_main_model}). We end in Subsection \ref{ss:homology examples} by considering several examples of this general construction. In particular, we construct a bivariant $\K$-theory category for projective systems of separable $C^*$-algebras, and show that it extends Kasparov's bivariant $\K$-theory. We also show how to use our model structure in order to obtain a representing projective system for $\K$-theory.

Originally Bonkat constructed a bivariant $\K$-theory for certain types of projective systems of separable $C^*$-algebras \cite{Bon}. In Section \ref{s:Bonkat} we compare the bivariant $\K$-theory for projective systems constructed in \ref{ss:homology examples} with Bonkat's construction.
We begin with Theorem \ref{t:LPK satisfy}, in which we show that our bivariant $\K$-theory satisfies the same defining properties as Bonkat's, namely, homotopy invariance, $C^*$-stability and split exactness. While our bivariant $\K$-theory satisfies these properties for \emph{all} projective systems of separable $C^*$-algebras, Bonkat's construction only applies to projective systems that have surjective connecting homomorphisms and admit a countable cofinal subsystem.
We then show, in Theorems \ref{t:Our lim1} and \ref{t:Our lim}, that the main calculational tools of Bonkat's $\K$-theory also hold for ours, and in fact, under less restrictive assumptions. We end with Theorem \ref{t:main Bonkat} in which we use the results above to show that in certain cases, our bivariant $\K$-theory agrees with Bonkat's construction. In this last section we use our model structure and its properties in an essential way.

\medskip
\noindent
{\bf Notations and conventions:} Throughout the article we use the language of model categories and that of $\infty$-categories. We refer the readers to \cite{HovBook} or \cite{Hir} for the prerequisites from the theory of model categories. For the benefit of the readers we have gathered some of the main results that we need about model categories in Appendix \ref{a:model}. By an $\infty$-category we mean the quasicategory model of Joyal and Lurie (see \cite{Joyal,Lur}). We have also compiled some of the main results that we need about $\infty$-categories in Appendix \ref{a:infinity}.

We denote by $\Csep$ the category of separable $C^*$-algebras and $*$-homomorphisms between them. Whenever we mention a tensor product on $\Csep$ we mean the maximal $C^*$-tensor product.
Whenever we mention a morphism between objects in $\Csep$ we mean a $*$-homomorphism.

\medskip
\noindent
{\bf Acknowledgements:}
The first author would like to thank Tomer M. Schlank for useful conversations. The third author has benefited from the hospitality of Max Planck Institute for Mathematics, Bonn and Hausdorff Research Institute for Mathematics, Bonn under various stages of development of this project.

\section{Preliminaries: homotopy theory in pro-categories}\label{s:prelim}
In this section we review some of the necessary background on pro-categories and homotopy theory in pro-categories. Standard references on pro-categories include \cite{SGA4-I} and \cite{AM}. For the homotopical parts the reader is referred to \cite{BarSch0,BarSch1,BarSch2,Bar}.  See also \cite{EH} and \cite{Isa}.

\subsection{Pro-categories}
In this subsection we bring general background on pro-categories.

\begin{defn}\label{d:cofiltered}
A category $I$ is called \emph{cofiltered} if the following conditions are satisfied:
\begin{enumerate}
\item $I$ is non-empty.
\item for every pair of objects $s,t \in I$, there exists an object $u\in I$, together with
morphisms $u\to s$ and $u\to t$.
\item for every pair of morphisms $f,g:s\to t$ in $I$, there exists a morphism $h:u\to s$ in $I$ such that $f\circ h=g\circ h$.
\end{enumerate}
\end{defn}

If $T$ is a poset, then we view $T$ as a category which has a single morphism $u\to v$ iff $u\geq v$. Thus, a poset $T$ is cofiltered iff $T$ is non-empty, and for every $a,b\in T$ there exists $c\in T$ such that $c\geq a, b$. A cofiltered poset will also be called \emph{directed}. Additionally, in the following, instead of saying \emph{a directed poset} we will just say \emph{a directed set}.

\begin{defn}\label{def CDS}
A poset $T$ is called \emph{cofinite} if for every element $x$ in $T$ the set $T_x:=\{z\in T\;|\; z \leq x\}$ is finite.
\end{defn}

A category is called \emph{small} if it has only a set of objects and a set of morphisms.

\begin{defn}
Let ${\cC}$ be a category. The category $\Pro({\cC})$ has as objects all diagrams in $\cC$ of the form $I\to \cC$ such that $I$ is small and cofiltered (see Definition ~\ref{d:cofiltered}). The morphisms are defined by the formula
$$\Hom_{\Pro({\cC})}(X,Y):=\lim_s \colim_t \Hom_{{\cC}}(X_t,Y_s).$$
Composition of morphisms is defined in the obvious way.
\end{defn}

Thus, if $X:I\to {\cC}$ and $Y:J\to {\cC}$ are objects in $\Pro({\cC})$, providing a morphism $X\to Y$ means specifying for every $s$ in $J$ an object $t$ in $I$ and a morphism $X_t\to Y_s$ in ${\cC}$. These morphisms should satisfy some compatibility condition. In particular, if $p:J\to I$ is a functor, and $\phi:p^*X:=X\circ p\to Y$ is a natural transformation, then the pair $(p,\phi)$ determines a morphism $\nu_{p,\phi}:X\to Y$ in $\Pro(\cC)$ (for every $s$ in $J$ we take the morphism $\phi_s:X_{p(s)}\to Y_s$). Taking $Y=p^*X$ and $\phi$ to be the identity natural transformation, we see that $p$ determines a morphism $\nu_{p,X}:X\to p^*X$ in $\Pro(\cC)$. If $I=J$ and we take $p=\id$, we see that every natural transformation $X\to Y$ determines a morphism in $\Pro(\cC)$.

The word pro-object refers to objects of pro-categories. A \emph{simple} pro-object
is one indexed by the category with one object and one (identity) map. Note that for any category ${\cC}$, $\Pro({\cC})$ contains ${\cC}$ as the full subcategory spanned by the simple objects. We will thus abuse notation and treat $\cC$ as a full subcategory of $\Pro({\cC})$.

\begin{defn}\label{d:cofinal}
Let $p:J\to I$ be a functor between small categories. The functor $p$ is said to be \emph{(left) cofinal} if for every $i$ in $I$ the over category ${p}_{/i}$ is nonempty and connected (This means that the geometric realization is a nonempty connected space).
\end{defn}

Cofinal functors play an important role in the theory of pro-categories mainly because of the following well known lemma:

\begin{lem}\label{l:cofinal}
Let $p:J\to I$ be a cofinal functor between small cofiltered categories, and let $X:I\to \cC$ be an object in $\Pro(\cC)$. Then $\nu_{p,X}:X\to p^*X$ is an isomorphism in $\Pro(\cC)$.
\end{lem}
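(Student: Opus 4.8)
The plan is to prove that $\nu_{p,X}:X\to p^*X$ is an isomorphism in $\Pro(\cC)$ by constructing an explicit inverse and verifying that the two composites are identities, using the defining formula for morphisms in a pro-category. Recall that a morphism $X\to Y$ in $\Pro(\cC)$ with $X:I\to\cC$ and $Y:J\to\cC$ is an element of $\lim_s\colim_t\Hom_\cC(X_t,Y_s)$. The morphism $\nu_{p,X}:X\to p^*X$ is the one determined by the functor $p$ together with the identity natural transformation; concretely, for each object $j\in J$ the $j$-component is the class in $\colim_t\Hom_\cC(X_t,X_{p(j)})$ of the identity map $X_{p(j)}\to (p^*X)_j=X_{p(j)}$.

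First I would construct the candidate inverse $\psi:p^*X\to X$. Since $p$ is cofinal, for every $i\in I$ the over category $p_{/i}$ is nonempty and connected. Nonemptiness gives, for each $i$, at least one object $(j,\alpha)$ with $\alpha:p(j)\to i$ in $I$; this yields a map $X_i\to X_{p(j)}=(p^*X)_j$ obtained by applying $X$ to $\alpha$. The $i$-component of $\psi$ should be the class of this map in $\colim_j\Hom_\cC((p^*X)_j,X_i)$. The key point is that connectedness of $p_{/i}$ guarantees this class is \emph{independent} of the chosen object $(j,\alpha)$: any two choices are linked by a zig-zag of morphisms in $p_{/i}$, and each such morphism becomes an equality after passing to the colimit over $j$, since a morphism $(j,\alpha)\to(j',\alpha')$ in $p_{/i}$ is a map $j\to j'$ in $J$ compatible with the structure maps to $i$. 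I would also need to check that these components are compatible as $i$ varies, i.e.\ that they assemble to a genuine element of $\lim_i\colim_j\Hom_\cC((p^*X)_j,X_i)$; this again uses connectedness to match up choices made at different indices, together with cofilteredness of $I$ to find common refinements.

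Next I would verify the two composites. For $\psi\circ\nu_{p,X}:X\to X$, the $i$-component is computed by choosing some $(j,\alpha)$ with $\alpha:p(j)\to i$ and composing $X_i\xrightarrow{X(\alpha)}X_{p(j)}\xrightarrow{\id}X_{p(j)}$, then comparing with the identity of $X$ in $\colim_t\Hom_\cC(X_t,X_i)$; since $X(\alpha)$ factors through a structure map of the diagram $X$, its class agrees with that of $\id_{X_i}$ in the colimit, so the composite is $\id_X$. For $\nu_{p,X}\circ\psi:p^*X\to p^*X$, the $j$-component involves evaluating $\psi$ at $i=p(j)$ using the canonical object $(j,\id_{p(j)})\in p_{/j}$, which makes the composite reduce to the identity; here the independence-of-choice established above is what licenses using this canonical representative.

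I expect the main obstacle to be the \textbf{well-definedness of $\psi$}, specifically showing that the colimit class of the component map at each $i$ does not depend on the chosen object of $p_{/i}$ and that the components cohere into an element of the inverse limit. This is precisely where the full strength of cofinality (both nonemptiness and connectedness of every $p_{/i}$) is used, as opposed to mere surjectivity of $p$ on objects. Everything else is a routine, if slightly tedious, unwinding of the $\lim\colim$ formula for pro-morphisms; the conceptual content is entirely concentrated in the claim that connectedness collapses the ambiguity in the colimit. It would likely be cleanest to phrase this by checking that $\nu_{p,X}$ induces a bijection $\Hom_{\Pro(\cC)}(Y,X)\xrightarrow{\sim}\Hom_{\Pro(\cC)}(Y,p^*X)$ for every pro-object $Y$ (Yoneda), which reduces the whole statement to the identity $\colim_i\Hom_\cC(Z,X_i)\xrightarrow{\sim}\colim_j\Hom_\cC(Z,X_{p(j)})$ of filtered colimits induced by a cofinal functor, a standard fact about cofinal functors into filtered diagrams.
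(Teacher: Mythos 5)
Your main argument is correct, and in fact the paper has nothing to compare it to: Lemma \ref{l:cofinal} is stated there as a ``well known lemma'' with no proof, the reader being implicitly referred to the standard sources on pro-categories (\cite{SGA4-I}, \cite{AM}). Your explicit-inverse construction is the standard way to fill this in, and the details check out. Defining $\psi_i$ as the class of $X(\alpha):X_{p(j)}\to X_i$ for a chosen $(j,\alpha)\in p_{/i}$, independence of the choice follows from connectedness exactly as you say (a morphism $\gamma:(j,\alpha)\to(j',\alpha')$ gives $X(\alpha)=X(\alpha')\circ X(p(\gamma))$, which is precisely a transition map of the colimit over $J^{\op}$), compatibility in $i$ then follows by composing with $X(\beta)$ and invoking that independence (cofilteredness of $I$ is not actually needed for this step), and both composites reduce to identities by the computations you indicate: $\psi\circ\nu_{p,X}=\id_X$ because $[X(\alpha)]=[\id_{X_i}]$ in $\colim_t\Hom_{\cC}(X_t,X_i)$ via the transition map along $\alpha$, and $\nu_{p,X}\circ\psi=\id_{p^*X}$ using the canonical object $(j,\id_{p(j)})$ of $p_{/p(j)}$, which is legitimate precisely because of the independence already established.

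One caveat about your closing remark: the Yoneda-style shortcut is stated with the wrong variance. The functors $\Hom_{\cC}(Z,X_i)$ are covariant in $i$, so over the cofiltered category $I$ they assemble into a \emph{cofiltered limit}, not a filtered colimit; indeed $\Hom_{\Pro(\cC)}(Z,X)=\lim_i\Hom_{\cC}(Z,X_i)$. The clean reduction goes the other way: since $A\mapsto\Hom_{\Pro(\cC)}(A,-)|_{\cC}$ is fully faithful on $\Pro(\cC)$ (because $\Hom_{\Pro(\cC)}(A,B)=\lim_s\Hom_{\Pro(\cC)}(A,B_s)$), it suffices to show that for every $Z\in\cC$ the map
$$\colim_{j\in J^{\op}}\Hom_{\cC}(X_{p(j)},Z)\to\colim_{i\in I^{\op}}\Hom_{\cC}(X_i,Z)$$
is a bijection, which is the standard fact that $p^{\op}:J^{\op}\to I^{\op}$ is a final functor between filtered categories. (Alternatively, testing with $\Hom_{\Pro(\cC)}(Y,-)$ for all pro-objects $Y$ reduces the lemma to the statement that initial functors preserve limits, not to a filtered-colimit statement.) This does not affect your proof, since the remark was offered only as an alternative packaging of the argument you carried out directly.
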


The following lemma can be found in \cite[Proposition 8.1.6]{SGA4-I} and in \cite[Corollary 3.11]{BarSch0}:
\begin{lem}\label{l:cofinal_CDS}
Let $I$ be a small cofiltered category. Then there exists a small
cofinite directed set $A$ and a cofinal functor $A\to I$.
\end{lem}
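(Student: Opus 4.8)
The plan is to factor the desired cofinal functor through an intermediate directed set, by first replacing $I$ with a cofiltered \emph{poset} and then refining that poset to a cofinite one. Concretely, I would prove two reductions: (A) every small cofiltered category $I$ admits a cofinal functor $\psi\colon T\to I$ from a small cofiltered poset (directed set) $T$; and (B) every small directed set $T$ admits a cofinal functor $\phi\colon A\to T$ from a small cofinite directed set $A$. Since a composite of cofinal functors is again cofinal (a standard fact), the composite $A\xrightarrow{\phi}T\xrightarrow{\psi}I$ then furnishes the required cofinal functor from a cofinite directed set, proving the lemma.

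Step (B) is the clean combinatorial heart, and I would carry it out explicitly. Let $A$ be the set of all nonempty finite subsets $S\subseteq T$ that possess a maximum $\max S$, partially ordered by inclusion, and define $\phi(S):=\max S$. Then $A$ is a poset, and $\phi$ is monotone since $S\subseteq S'$ forces $\max S\le \max S'$, so $\phi$ is a functor of the associated categories. The set $A$ is directed: given $S_1,S_2\in A$, choose $t\in T$ with $t\ge \max S_1,\max S_2$ (possible as $T$ is directed), and then $S_1\cup S_2\cup\{t\}$ is finite with maximum $t$, hence lies in $A$ and dominates both. It is cofinite because $A_S=\{S'\in A: S'\subseteq S\}$ is contained in the (finite) power set of the finite set $S$. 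Finally $\phi$ is cofinal in the sense of Definition \ref{d:cofiltered} (i.e. Definition \ref{d:cofinal}): for $t\in T$ the over category $\phi_{/t}$ is nonempty since $\{t\}$ has maximum $t\ge t$, and it is connected since any object $(S_i,\max S_i\ge t)$ is joined to $(\{t\},\id)$ through $S_i\cup\{t\}$, which still has maximum $\max S_i\ge t$ and contains both $S_i$ and $\{t\}$; this produces the zigzag $S_1\leftarrow S_1\cup\{t\}\to\{t\}\leftarrow S_2\cup\{t\}\to S_2$ inside $\phi_{/t}$.

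Step (A) is where I expect the real difficulty to lie. The idea is to organize the cones guaranteed by cofilteredness into a poset: roughly, take elements to be finite subdiagrams $F$ of $I$ each equipped with a dominating vertex $x$, i.e. an object with compatible morphisms $x\to f$ for every object $f$ of $F$, which exist by axioms (2) and (3) of Definition \ref{d:cofiltered}, and send such a datum to its vertex $x$. Two finite subdiagrams admit a common refinement with a common dominating vertex, which should yield directedness, and every object of $I$ is dominated by some such vertex, which should yield cofinality of the vertex map. The obstacle is that the naive order on such pairs is neither antisymmetric (there are many choices of vertex and of dominating morphisms) nor does the vertex assignment respect composition, so it is not literally a functor of posets. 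The standard remedy, which I would follow, is a transfinite construction: well-order the objects and morphisms of $I$ and build an increasing chain of finite configurations, at each stage adjoining a fresh dominating vertex with freshly chosen structure maps so that antisymmetry and functoriality are maintained. Managing this bookkeeping carefully, so that the resulting poset is genuinely cofiltered and the vertex functor genuinely cofinal, is the main technical hurdle; everything else reduces to the transparent construction in Step (B).
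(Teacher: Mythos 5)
Your two-step factorization---(A) replace the cofiltered category $I$ by a cofiltered \emph{poset}, (B) replace a directed set by a \emph{cofinite} directed set, then compose cofinal functors---is exactly the route taken by the sources the paper cites: the paper itself gives no proof, referring instead to \cite[Proposition 8.1.6]{SGA4-I} for step (A) and to \cite[Corollary 3.11]{BarSch0}, which performs precisely this composition. Your step (B) is complete and correct; it is the classical Marde\v{s}i\'c reindexing trick (finite subsets possessing a maximum, ordered by inclusion, mapped by $\max$), and your verifications of directedness, cofiniteness, and of nonemptiness and connectedness of the over categories are all accurate, as is the appeal to the standard fact that cofinal functors compose.

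The genuine gap is step (A), which you leave as a sketch---and step (A) is the entire nontrivial content of the lemma, namely the theorem of \cite{SGA4-I}. Moreover, the remedy you propose---well-order the objects and morphisms of $I$ and ``build an increasing chain of finite configurations''---cannot work as literally stated: a cofiltered category need not admit \emph{any} cofinal chain. For instance, in the directed poset of finite subsets of $\RR$, a totally ordered family of finite sets embeds into $\NN$ via cardinality and hence has countable union, so no chain is cofinal; consequently no bookkeeping along a single transfinite chain of finite configurations can produce the required functor. The classical proof is structurally different: take $A$ to be the set of finite subcategories $e\subseteq I$ admitting a unique initial object $s_e$, ordered by inclusion, and let $\phi(e)=s_e$; functoriality is automatic because morphisms out of $s_{e'}$ inside $e'$ are unique (this is exactly what dissolves the antisymmetry/composition obstacles you correctly identified), directedness of $A$ follows from cofilteredness of $I$ by a finite equalizing argument, and cofinality is checked directly. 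Notably, this construction is already cofinite---a finite category has only finitely many subcategories---so it proves the whole lemma in one stroke and makes your step (B) superfluous. (A transfinite argument in the spirit of \cite{AR} can also be made to work, but it recursively builds the directed poset itself, not a chain, and is considerably more delicate than your sketch suggests.)
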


\begin{defn}\label{def natural}
Let ${\cC}$ be a category with finite limits, $M$ a class of morphisms in ${\cC}$, $I$ a small category, and $F:X\to Y$ a morphism in ${\cC}^I$. Then:
\begin{enumerate}
\item The map  $F$ will be called a \emph{levelwise} $M$-\emph{map}, if for every $i$ in $I$ the morphism $X_i\to Y_i$ is in $M$. We will denote this by $F\in Lw(M)$.
\item The map  $F$ will be called a \emph{special} $M$-\emph{map}, if the following hold:
    \begin{enumerate}
    \item The indexing category $I$ is a cofinite poset (see Definition \ref{def CDS}).
    \item The natural map $X_t \to Y_t \times_{\lim_{s<t} Y_s} \lim_{s<t} X_s $ is in $M$, for every $t$ in $ I$.
    \end{enumerate}
    We will denote this by $F\in Sp(M)$.

\end{enumerate}
\end{defn}

\begin{defn}\label{d:lift}
Let $\cC$ be a category and let $f:A\to B$ and $g:X\to Y$ be morphisms in $\cC$. Then we say that $f$ has the \emph{left lifting property} with respect to $g$, or equivalently, that $g$ has the \emph{right lifting property} with respect to $f$, if in every commutative square of the form
$$\xymatrix{A\ar[r]\ar[d]_f & X\ar[d]^g\\
            B\ar[r] & Y}$$
we have a lift $B\to X$, making the diagram commutative.
\end{defn}

\begin{defn}\label{def mor}
Let ${\cC}$ be a category and let $M$ be a class of morphisms in ${\cC}$.
\begin{enumerate}
\item We denote by $R(M)$ the class of morphisms in ${\cC}$ that are retracts of morphisms in $M$. Note that $R(R(M))=R(M)$.
\item We denote by $M^{\perp}$ (resp. ${}^{\perp}M$) the class of morphisms in ${\cC}$ having the right (resp. left) lifting property with respect to all the morphisms in $M$.
\item We denote by $Lw^{\cong}(M)$ the class of morphisms in $\Pro({\cC})$ that are \textbf{isomorphic} to a morphism that comes from a natural transformation which is a levelwise $M$-map.
\item If $\cC$ has finite limits, we denote by $Sp^{\cong}(M)$ the class of morphisms in $\Pro({\cC})$ that are \textbf{isomorphic} to a morphism that comes from a natural transformation which is a special $M$-map.
\end{enumerate}
\end{defn}

Everything we did so far (and throughout this paper) is completely dualizable. Thus we can define:
\begin{defn}\label{d:filtered}
A category $I$ is called \emph{filtered} if the following conditions are satisfied:
\begin{enumerate}
\item $I$ is non-empty.
\item for every pair of objects $s,t \in I$, there exists an object $u\in I$, together with
morphisms $s\to u$ and $t\to u$.
\item for every pair of morphisms $f,g:s\to t$ in $I$, there exists a morphism $h:t\to u$ in $I$ such that $h\circ f=h\circ g$.
\end{enumerate}
\end{defn}

The dual to the notion of a pro-category is the notion of an ind-category:

\begin{defn}
Let ${\cC}$ be a category. The category $\Ind({\cC})$ has as objects all diagrams in $\cC$ of the form $I\to \cC$ such that $I$ is small and filtered (see Definition ~\ref{d:filtered}). The morphisms are defined by the formula:
$$\Hom_{\Ind({\cC})}(X,Y):=\lim_s \colim_t \Hom_{{\cC}}(X_s,Y_t).$$
Composition of morphisms is defined in the obvious way.
\end{defn}

Clearly for every category $\cC$ we have a natural isomorphism of categories $\Ind(\cC)^{\op}\cong \Pro(\cC^{\op})$.

In general, we are not going to write the dual to every definition or theorem explicitly, only in certain cases.

\subsection{From a weak fibration category to a model category}
In this subsection we discuss the construction of model structures on pro-categories.

\begin{defn}
Let ${\cC}$ be category with finite limits, and let ${\cM}\subseteq{\cC}$ be a subcategory. We say
that ${\cM}$ is \emph{closed under base change} if whenever we have a pullback square:
\[
\xymatrix{A\ar[d]_g\ar[r] & B\ar[d]^f\\
C\ar[r] & D}
\]
such that $f$ is in ${\cM}$, then $g$ is in ${\cM}$.
\end{defn}

\begin{defn}\label{d:weak_fib}
A \emph{weak fibration category} is a category ${\cC}$ with an additional
structure of two subcategories:
$${\cF}, {\cW} \subseteq {\cC}$$
that contain all the isomorphisms such that the following conditions are satisfied:
\begin{enumerate}
\item ${\cC}$ has all finite limits.
\item ${\cW}$ has the 2 out of 3 property.
\item The subcategories ${\cF}$ and ${\cF}\cap {\cW}$ are closed under base change.
\item Every map $A\to B $ in ${\cC}$ can be factored as $A\xrightarrow{f} C\xrightarrow{g} B $,
where $f$ is in ${\cW}$ and $g$ is in ${\cF}$.
\end{enumerate}
The maps in ${\cF}$ are called \emph{fibrations}, the maps in ${\cW}$ are called \emph{weak equivalences}, and the maps in $\cF\cap\cW$ are called \emph{acyclic fibrations}.
\end{defn}

Let $\cS_{\fin}$ denote the category of finite simplicial sets, that is, simplicial sets having a finite number of non-degenerate simplicies.
Note that there is a natural equivalence of categories $\Ind(\cS_{\fin})\xrightarrow{\sim}\cS$, given by taking colimits (see \cite{AR}).
We define a map in $\cS_{\fin}$ to be a cofibration or a weak equivalence, if it is so in the usual model structure on simplicial sets.

\begin{defn}\label{d:simplicial_weak}
A \emph{simplicial weak fibration category} is a weak fibration category $\cC$  together with a bifunctor $\hom(-,-):\cS_{\fin}^{\op}\times\cC\to \cC$ and coherent natural isomorphisms
$$\hom(L, \hom(K, X))\cong \hom(K\times L, X),$$
$$\hom(\Delta^0, X)\cong X,$$
for $X$ in $\cC$ and $K,L$ in $\cS_{\fin}$, such that:
\begin{enumerate}
\item The bifunctor $\hom$ commutes with finite limits in every variable separately.
\item For every cofibration $j:K\to L$ in $\cS_{\fin}$ and every fibration $p:A\to B$ in $\cC$, the induced map:
$$\hom(L,A)\to \hom(K,A) \prod_{\hom(K,B)}\hom(L,B)$$
is a fibration (in $\cC$), which is acyclic if either $j$ or $p$ is.
\end{enumerate}
\end{defn}

We now give our main tool for constructing our model structure. This is the main theorem in the paper \cite{Bar} by the first author. It is based on earlier joint work with Tomer M. Schlank \cite{BarSch0,BarSch1,BarSch2}. See also \cite{EH} and \cite{Isa} for related results. Note that the result in \cite{Bar} is stated for the dual ind-picture, but we bring it here in the form appropriate to the application that we need.

\begin{thm}[{\cite[Theorem 4.13]{Bar}}]\label{t:tool_main}
Let $({\cC},{\cW},{\cF})$ be a small simplicial weak fibration category  that satisfies the following conditions:
\begin{enumerate}
\item $\cC$ has finite colimits.
\item Every object in $\cC$ is fibrant.
\item A map in $\cC$ that is a homotopy equivalence in the simplicial category $\cC$ is also a weak equivalence.
\item Every acyclic fibration in $\cC$ admits a section.
\end{enumerate}
Then there exists a simplicial model category structure on $\Pro(\cC)$ such that:
\begin{enumerate}
\item The weak equivalences are $\mathbf{W} := Lw^{\cong}(\cW)$.
\item The fibrations are $\mathbf{F} := R(Sp^{\cong}(\cF))$.
\item The cofibrations are $\mathbf{C} :=  {}^{\perp}(\cF\cap \cW)$.
\end{enumerate}

Moreover, this model category is cocombinatorial, with set of generating fibrations $\cF$ and set of generating acyclic fibrations $\cF\cap \cW$.

The model category $\Pro(\cC)$ has the following further properties:
\begin{enumerate}
\item The acyclic fibrations are given by $\mathbf{F}\cap\mathbf{W} = R(Sp^{\cong}(\cF\cap\cW))$.
\item Every object in $\Pro(\cC)$ is cofibrant.
\item $\Pro(\cC)$ is proper.
\end{enumerate}
\end{thm}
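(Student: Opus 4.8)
The plan is to separate the genuinely new content of the statement from what is already supplied by general machinery. First note that hypothesis (1), together with the weak fibration axiom that $\cC$ has finite limits (Definition~\ref{d:weak_fib}), makes $\Pro(\cC)$ both complete and cocomplete, so it can carry a model structure. Next, the results of \cite{BarSch1,BarSch2} show that a small weak fibration category $(\cC,\cW,\cF)$ induces on $\Pro(\cC)$ a structure in which $\mathbf{W}=Lw^{\cong}(\cW)$, $\mathbf{F}=R(Sp^{\cong}(\cF))$ and $\mathbf{C}={}^{\perp}(\cF\cap\cW)$ form a model structure \emph{provided} $\mathbf{W}$ has the two-out-of-three property; that same machinery delivers the two weak factorization systems $(\mathbf{C}\cap\mathbf{W},\mathbf{F})$ and $(\mathbf{C},\mathbf{F}\cap\mathbf{W})$, the identification $\mathbf{F}\cap\mathbf{W}=R(Sp^{\cong}(\cF\cap\cW))$, and cocombinatoriality. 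Hence the whole theorem reduces to: (a) verifying two-out-of-three for $\mathbf{W}$ from hypotheses (1)--(4), which is the crux; and (b) checking the simplicial structure, cofibrancy of every object, and properness, which are comparatively formal. I would spend essentially all of the effort on (a).

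For the ambient factorizations I would recall the reindexing technique: by Lemma~\ref{l:cofinal_CDS} every morphism of $\Pro(\cC)$ is isomorphic to a levelwise one over a cofinite directed poset $A$, and cofiniteness permits an induction over $A$ in which one repeatedly applies the factorization axiom (Definition~\ref{d:weak_fib}) to the matching maps $X_t\to Y_t\times_{\lim_{s<t}Y_s}\lim_{s<t}X_s$ of Definition~\ref{def natural}. Since special maps are iterated base changes of the generators, and $\cF$ and $\cF\cap\cW$ are closed under base change (Definition~\ref{d:weak_fib}), the requisite lifting properties reduce to lifting against $\cF$ and $\cF\cap\cW$ alone; this simultaneously yields the formula for $\mathbf{F}\cap\mathbf{W}$ and exhibits $\cF,\cF\cap\cW$ as generating sets, whence cocombinatoriality using smallness of $\cC$. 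None of this step uses hypotheses (1)--(4) beyond the plain weak fibration axioms.

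The two-out-of-three property for $\mathbf{W}$ is the hard part, and it is exactly what hypotheses (1)--(4) are designed to force. The difficulty is that membership in $Lw^{\cong}(\cW)$ is defined only up to isomorphism of pro-morphisms, so given composable $f,g$ with two of $f,g,gf$ in $\mathbf{W}$, bringing all three to a common strict index over a cofinite directed poset need \emph{not} leave the chosen representatives levelwise in $\cW$, and one cannot naively invoke two-out-of-three for $\cW$ in $\cC$ objectwise. My approach would be to give an intrinsic, homotopy-invariant and reindexing-stable description of $\cW$ from the simplicial enrichment. Since every object of $\cC$ is fibrant (hypothesis (2)), the bifunctor $\hom(-,-)$ of Definition~\ref{d:simplicial_weak} supplies functorial path objects and cylinders and makes $\cC$ a category of fibrant objects in the sense of \cite{Bro}; hypothesis (3) places simplicial homotopy equivalences in $\cW$, and hypothesis (4) lets one promote suitable maps to homotopy equivalences by selecting sections of acyclic fibrations. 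Combining these, I would show that $\cW$ is the class inverted by $\cC\to\Ho(\cC)$ and that this class is detected by the simplicial mapping data in a way stable under reindexing. With such a description, two-out-of-three for $\mathbf{W}$ follows: after one common cofinal reindexing of $f,g,gf$, the criterion is checked objectwise and the known two-out-of-three for $\cW$ in $\cC$ closes the argument. The genuine obstacle is precisely this rectification --- proving that a map in $Lw^{\cong}(\cW)$ becomes, after a sufficiently fine common reindexing, objectwise a weak equivalence up to homotopy --- where hypotheses (2)--(4) must be used in an essential and coordinated way; I expect it to absorb the bulk of the work.

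Finally, for (b): the simplicial structure on $\Pro(\cC)$ is obtained by extending $\hom(-,-)$ to pro-objects and promoting the enrichment axiom of Definition~\ref{d:simplicial_weak} to $\Pro(\cC)$, where it becomes the pushout-product compatibility between the cotensor and the classes $\mathbf{F}$ and $\mathbf{F}\cap\mathbf{W}$. That every object is cofibrant uses hypothesis (4) directly: each generator $g\colon A\to B$ in $\cF\cap\cW$ admits a section $s$, so for any $X$ the lift $s\circ(X\to B)$ solves every square testing $\emptyset\to X$ against $g$ (the upper triangle commutes by uniqueness of maps out of the initial object), giving $\emptyset\to X\in{}^{\perp}(\cF\cap\cW)=\mathbf{C}$. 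Left properness is then automatic since all objects are cofibrant, and right properness follows from the levelwise description of $\mathbf{W}$ together with the base-change closure properties of the weak fibration structure, reducing the pullback statement to the corresponding objectwise fact in $\cC$.
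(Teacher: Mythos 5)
The first thing to say is that the paper itself contains no proof of Theorem~\ref{t:tool_main}: it is imported verbatim as \cite[Theorem 4.13]{Bar}, and the surrounding discussion only explains that it combines the pro-category machinery of \cite{BarSch0,BarSch1,BarSch2} with an intrinsic criterion, proved in \cite{Bar}, guaranteeing the two-out-of-three property for the induced weak equivalences. So the only meaningful comparison is with that cited literature, and your decomposition reproduces its architecture faithfully: the two weak factorization systems, the identification $\mathbf{F}\cap\mathbf{W}=R(Sp^{\cong}(\cF\cap\cW))$, cocombinatoriality, and the construction of factorizations by cofinite reindexing (Lemma~\ref{l:cofinal_CDS}) and induction on matching maps all belong to the conditional machinery and need only the weak fibration axioms, while hypotheses (1)--(4) exist solely to force two-out-of-three for $\mathbf{W}=Lw^{\cong}(\cW)$. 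Your peripheral verifications are also essentially sound: the section argument for cofibrancy of every object (with the upper triangle commuting by initiality) is exactly right and uses hypothesis (4) as it should; left properness from cofibrancy is standard; and right properness correctly rests on Brown's lemma in $\cC$, which is available because hypothesis (2) together with the simplicial cotensor makes $\cC$ a category of fibrant objects, though your levelwise reduction there is only sketched.

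The genuine gap is that the crux is never proved. The two-out-of-three property for $Lw^{\cong}(\cW)$ --- which you yourself isolate as the entire mathematical content of the theorem beyond \cite{BarSch1,BarSch2} --- is handled only by a declaration of intent: ``I would show that $\cW$ is the class inverted by $\cC\to\Ho(\cC)$ and that this class is detected by the simplicial mapping data in a way stable under reindexing.'' That rectification statement is not a deferrable routine step; it is precisely the theorem proved in \cite{Bar}. The difficulty you correctly diagnose (membership in $Lw^{\cong}(\cW)$ holds only up to isomorphism of pro-maps, so composable $f,g$ with two of $f$, $g$, $gf$ in $\mathbf{W}$ need not admit a common strict presentation that is levelwise in $\cW$) is exactly why the asserted coincidence of $Lw^{\cong}(\cW)$ with an intrinsic, homotopy-invariant class requires a long, coordinated argument using hypotheses (2)--(4): fibrancy to produce path objects and a Brown-type structure, hypothesis (3) for one inclusion between simplicial homotopy equivalences and weak equivalences, hypothesis (4) for the other via sections of acyclic fibrations. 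Naming the strategy and predicting that it ``absorbs the bulk of the work'' does not discharge it. As written, your proposal establishes nothing beyond what is already conditional in the cited machinery, and the conclusion of the theorem remains unproven.
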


\begin{rem}
The simplicial structure on $\Pro(\cC)$ in the theorem above is given by the natural prolongation of the cotensor action of $\cS_{\fin}$ on $\cC$, using the natural equivalence of categories $\cS\simeq\Ind(\cS_{\fin})$. Namely, if $K=\{K_i\}_{i\in I}$ is an object in $\cS\simeq\Ind(\cS_{\fin})$ and $A=\{A_j\}_{j\in J}$ is an object in $\Pro(\cC)$ then
$$\hom(K,A)=\{\hom(K_{i}, A_{j})\}_{(i,j)\in I^{\op}\times J}\in\Pro(\cC).$$
\end{rem}

\begin{rem}\label{r:tool_main}
If $(\cM,\cW,\cF,\cC)$ is any model category, then $(\cM^{\op},\cW^{\op},\cC^{\op},\cF^{\op})$ is also a model category. Thus, if $({\cC},{\cW},{\cF})$ is a weak fibration category satisfying the hypothesis of Theorem \ref{t:tool_main}, so that there is an induced simplicial model structure on $\Pro(\cC)$, there is also an induced simplicial model structure on $\Pro(\cC)^{\op}\cong\Ind(\cC^{\op})$, with properties dual to those stated in Theorem \ref{t:tool_main}.
\end{rem}

\subsection{Relation to pro-$\infty$-categories}
We finish this preliminary section by connecting the model structure of Theorem \ref{t:tool_main} with the $\infty$-categorical construction of the pro-category. In \cite[Section 5.3]{Lur}, Lurie defines the ind-category of a small $\infty$-category. The pro-category of a small $\infty$-category $\cC$ can be simply defined as $\Pro(\cC):=\Ind(\cC^{\op})^{\op}$.

Let $({\cC},{\cW},{\cF})$ be a weak fibration category satisfying the conditions of Theorem \ref{t:tool_main}. Clearly, we have a natural relative functor
$$({\cC},{\cW})\to (\Pro(\cC),Lw^{\cong}(\cW)).$$
This relative functor induces an $\infty$-functor between the $\infty$-localizations
$$\cC_\infty\to \Pro(\cC)_\infty.$$
(See Appendix \ref{ss:infinity} for  the definition of a relative category and the $\infty$-localization of a relative category.) The following theorem is a corollary of the main result in \cite{BarHarHor}:
\begin{thm}\label{t:BarHarHor}
Extending the natural functor $\cC_\infty\to \Pro(\cC)_\infty$ according to the universal property of the $\infty$-categorical pro-construction gives an equivalence of $\infty$ categories
$$\Pro(\cC_\infty)\simeq \Pro(\cC)_\infty.$$
In particular, the natural functor $\cC_\infty\to \Pro(\cC)_\infty$ is derived fully faithful.
\end{thm}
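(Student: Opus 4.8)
The plan is to deduce the statement directly from the main comparison theorem of \cite{BarHarHor}, which for a suitable weak fibration category identifies the $\infty$-localization of the model-categorical pro-construction with Lurie's $\infty$-categorical pro-category; the work then lies entirely in checking that our situation is within the scope of that theorem and in unwinding its conclusion. First I would recall the precise hypotheses of the cited theorem and match them against our data. By assumption $(\cC,\cW,\cF)$ is a small simplicial weak fibration category satisfying conditions (1)--(4) of Theorem \ref{t:tool_main}; in particular the induced model structure on $\Pro(\cC)$ exists, and its class of weak equivalences is exactly $Lw^{\cong}(\cW)$. Moreover, since every object of $\cC$ is fibrant and homotopy equivalences are weak equivalences, $\cC$ is (up to the simplicial enrichment) essentially a category of fibrant objects, so its $\infty$-localization $\cC_\infty$ is a small $\infty$-category admitting finite $\infty$-limits. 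These are precisely the inputs required by \cite{BarHarHor}.

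Next I would produce the comparison functor. The underlying $\infty$-category $\Pro(\cC)_\infty$ is that of a model category, hence admits all (in particular cofiltered) $\infty$-limits. Since $\Ind(\cC_\infty^{\op})$ is the free cocompletion of $\cC_\infty^{\op}$ under filtered colimits, its opposite $\Pro(\cC_\infty)=\Ind(\cC_\infty^{\op})^{\op}$ is the free completion of $\cC_\infty$ under cofiltered limits. Its universal property therefore extends the natural relative functor $\cC_\infty\to\Pro(\cC)_\infty$ uniquely to a cofiltered-limit-preserving $\infty$-functor
$$\Pro(\cC_\infty)\longrightarrow\Pro(\cC)_\infty,$$
and the content of \cite{BarHarHor} is exactly that this functor is an equivalence.

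The ``in particular'' clause is then formal. Under the equivalence just established, the functor $\cC_\infty\to\Pro(\cC)_\infty$ is identified with the canonical embedding $\cC_\infty\hookrightarrow\Pro(\cC_\infty)$, which is fully faithful because it is the opposite of the Yoneda embedding $\cC_\infty^{\op}\to\Ind(\cC_\infty^{\op})$. Hence $\cC_\infty\to\Pro(\cC)_\infty$ is fully faithful, which is to say the relative functor $(\cC,\cW)\to(\Pro(\cC),Lw^{\cong}(\cW))$ is derived fully faithful.

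The genuine difficulty is not in this reduction but in the cited theorem itself, whose proof must reconcile the two a priori different pro-constructions. Within the present corollary the only points needing care are that the $\infty$-localization of the model category $\Pro(\cC)$, obtained by inverting its model-categorical weak equivalences, agrees with the $\infty$-localization of the relative category $(\Pro(\cC),Lw^{\cong}(\cW))$, and that $\cC_\infty$ really does carry the finite-limit structure used to form $\Pro(\cC_\infty)$; both follow from the general theory of $\infty$-localizations of model categories together with conditions (1)--(4) of Theorem \ref{t:tool_main}.
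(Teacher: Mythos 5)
Your proposal is correct and follows exactly the route the paper takes: the paper offers no proof at all, simply declaring the statement a corollary of the main result of \cite{BarHarHor}, and your write-up is just that citation fleshed out (hypothesis-checking against Theorem \ref{t:tool_main}, the universal-property extension, and the formal deduction of derived full faithfulness from full faithfulness of $\cC_\infty\hookrightarrow\Pro(\cC_\infty)$). All the genuine mathematical content lives in \cite{BarHarHor} in both versions, so there is nothing to fault here.
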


\begin{rem}\label{r:BarHarHor}
By Remark  \ref{r:tool_main} we have an induced model structure on $\Ind(\cC^{\op})$. It thus follows from Theorem \ref{t:BarHarHor} that there is a natural equivalence of $\infty$ categories
$$\Ind(\cC_\infty^{\op})\simeq \Ind(\cC^{\op})_\infty.$$
\end{rem}

\section{Model structure on the pro-category of $C^*$-algebras}\label{s:model C}

In this section we construct our model structure on the category $\Pro(\Csep)$, where $\Csep$ is the category of separable $C^*$-algebras.

\subsection{$\Csep$ as a weak fibration category}\label{ss:SC WFC}

\begin{defn}\label{d:SC}
Let $\Csep$ denote the category of separable $C^*$-algebras and $*$-homomorphisms between them.
\end{defn}

\begin{rem}\label{r:Top enrich}
As noted in \cite{Uuye}, the category $\Csep$ is naturally enriched over $\Top$, the Cartesian closed category of compactly generated weakly Hausdorff topological spaces. Indeed, if $A,B\in\Csep$, we can give $\Hom_{\Csep}(A,B)$ the subspace topology of the space of all continuous maps $\Top(A,B)$, endowed with the compact open topology. We denote this \emph{space} by $\Csep(A,B)$. Since $A$ is separable it follows that $\Csep(A,B)$ is metrizable and hence compactly generated Hausdorff (see \cite[Remark 2.1]{Uuye}).
\end{rem}

\begin{rem}\label{r:conventions}
The category $\Csep$ is essentially small, since any separable $C^*$-algebra is isomorphic to a sub $C^*$-algebra of the $C^*$-algebra of bounded operators on $\ell^2$. We can therefore assume that we are working with an equivalent small category, and we will do so without mentioning.
\end{rem}

We now define a structure of a simplicial weak fibration category on $\Csep$.
\begin{defn}[{\cite[Definition 2.14]{Uuye}}]
A map $p: A\map B$ in $\Csep$ is called a {\em Schochet fibration} if for every $D\in\Csep$ and every commutative diagram of the form
$$\xymatrix{
\{0\} \ar[d] \ar[r] & \Csep(D,A)\ar[d]^{p_*}\\
[0,1]\ar[r] & \Csep(D,B),}$$
there exists a lift $[0,1]\map \Csep(D,A)$.
\end{defn}

\begin{rem}
The definition of a Schochet fibration was originally introduced by Schochet in \cite{SchTop3}, where such maps were called {\em cofibrations}.
\end{rem}

\begin{defn}\label{d:action}
For every $K$ in $\cS_{\fin}$ and $A$ in $\Csep$ we denote by
$$\hom(K,A):=\tC(|K|)\otimes A=\tC(|K|,A)$$
the separable $C^*$ algebra of continuous maps $|K|\to A$, where $|K|$ is the geometric realization of $K$.

Let $A\in\Csep$, and consider the simplicial unit interval $\Delta^1\in\cS_{\fin}$. Then $\hom(\Delta^1,A)$ is just the $C^*$-algebra of continuous maps from the topological unit interval $I=|\Delta^1|$ to $A$. We have two simplicial maps $\Delta^0\to\Delta^1$, taking the values $0$ and $1$. These maps induce two maps in $\Csep$, which we denote
$$\pi_0,\pi_1:\hom(\Delta^1,A)\to \hom(\Delta^0,A)\cong A.$$
These maps are given by evaluation at $0$ and $1$ respectively. There is a unique simplicial map $\Delta^1\to\Delta^0$, which induces a map which we denote
$$\iota:A\cong \hom(\Delta^0,A)\to\hom(\Delta^1,A).$$
This map sends an element to the constant map at that element. The $C^*$ algebra $\hom(\Delta^1,A)$ together with the maps $\pi_0,\pi_1$ and $\iota$ is called the standard path object for $A$ given by the simplicial structure.
\end{defn}

\begin{defn}
We define $\cW$ to be the class of homotopy equivalences in $\Csep$, and $\cF$ to be the class of Schochet fibrations in $\Csep$.
\end{defn}

\begin{prop}[Andersen--Grodal, Uuye]\label{p:SC_WFC}
The triple $({\Csep},{\cW},{\cF})$ is a weak fibration category.
\end{prop}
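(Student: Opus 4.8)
The plan is to verify each of the four axioms in Definition~\ref{d:weak_fib} for the triple $(\Csep, \cW, \cF)$, where $\cW$ is the class of homotopy equivalences and $\cF$ is the class of Schochet fibrations. The bulk of this is citing the Andersen--Grodal/Uuye theory of the category of fibrant objects structure on $\Csep$; the work is to observe that this data packages into a weak fibration structure. I would proceed axiom by axiom, dispatching the easy ones first and isolating the factorization as the genuine content.

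\textbf{Axioms (1) and (2).} For finite limits, note that $\Csep$ has a terminal object (the zero algebra) and finite products (the direct sum $A\oplus B$). By Remark~\ref{r:Top enrich} the Hom-sets carry no obstruction, and separability is preserved under finite direct sums and under pullbacks, since a pullback $A\times_C B$ embeds in $A\oplus B$; thus $\Csep$ has all finite limits. For the 2-out-of-3 property of $\cW$: since $\cW$ is by definition the class of homotopy equivalences, and homotopy equivalence in any homotopy-enriched setting satisfies 2-out-of-3 (if two of $g\circ f$, $f$, $g$ are homotopy equivalences then so is the third, by composing and cancelling homotopy inverses up to homotopy), this is immediate. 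One should check that the notion of homotopy here — homotopy via the standard path object $\hom(\Delta^1, A) = \tC(I)\otimes A$ of Definition~\ref{d:action} — is the relevant one, which is the classical notion used by Schochet and Uuye.

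\textbf{Axiom (3).} The fibrations $\cF$ (Schochet fibrations) and the acyclic fibrations $\cF\cap\cW$ must be closed under base change. Closure of $\cF$ under pullback follows from the lifting-property characterization: a Schochet fibration is defined by a lifting property against $\{0\}\hookrightarrow[0,1]$ after applying $\Csep(D,-)$ for all $D$, and the functor $\Csep(D,-)$ sends a pullback in $\Csep$ to a pullback of topological spaces, so solving the lifting problem against the base change reduces to solving it against the original map. Closure of $\cF\cap\cW$ requires additionally that the base change of an acyclic fibration remain a weak equivalence; this is precisely one of the standard facts established in the Andersen--Grodal/Uuye framework (acyclic Schochet fibrations pull back to homotopy equivalences), and I would cite \cite{Uuye} for it.

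\textbf{Axiom (4) --- the main obstacle.} Every map $A\to B$ must factor as a homotopy equivalence followed by a Schochet fibration. This is the heart of the matter and the one step that is genuinely analytic rather than formal. The standard construction is the mapping path space: given $f\colon A\to B$, set
\[
P_f := A\times_B \hom(\Delta^1, B) = \{(a,\gamma)\in A\oplus \tC(I,B) \mid \gamma(0)=f(a)\},
\]
so that $A\to P_f$, $a\mapsto (a, \text{const}_{f(a)})$ is a homotopy equivalence (with retraction given by evaluation, using $\iota$ and the contractibility of $I$), and the evaluation-at-$1$ map $P_f\to B$, $(a,\gamma)\mapsto\gamma(1)$ is a Schochet fibration. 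The content is twofold: first, that $P_f$ is again a \emph{separable} $C^*$-algebra (it is a pullback in $\Csep$, hence lands in the category by the finite-limit discussion above); second, that $P_f\to B$ genuinely has the Schochet lifting property, which is exactly the solvability of a continuous path-lifting problem in the mapping path space and is established by Schochet \cite{SchTop3}. I expect the verification that $P_f\to B$ is a Schochet fibration to be the step requiring the most care, and I would lean on the cited work of Schochet and Uuye rather than reproving it, since Proposition~\ref{p:SC_WFC} is explicitly attributed to Andersen--Grodal and Uuye. The simplicial compatibility (Definition~\ref{d:simplicial_weak}) is deferred to the separate Proposition~\ref{p:simplicial}, so it need not be addressed here.
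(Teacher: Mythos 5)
Your proposal is correct and is essentially the paper's own proof: the paper likewise leans on Andersen--Grodal and Uuye for the homotopy-theoretic content (citing \cite[Corollary 3.9]{AndGro} for the fibration-category structure, plus the observation that $\Csep$ has finite limits), and then writes out exactly your mapping path space factorization $P(f)=A\times_{B}\hom(\Delta^1,B)$ with $i\colon A\to P(f)$ and $p\colon P(f)\to B$, citing \cite{SchTop3} for the fact that $i$ is a homotopy equivalence and $p$ is a Schochet fibration. The only difference is presentational: you verify axioms (1)--(3) by hand (finite limits via pullbacks of separable algebras, 2-out-of-3 for homotopy equivalences, base change via the lifting characterization), whereas the paper absorbs these into the citation.
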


\begin{proof}
In \cite[Corollary 3.9]{AndGro} it is shown that $({\Csep},{\cW},{\cF})$  is a fibration category in the sense of Baues \cite{Bau}. Since $\Csep$ as finite limits, we obtain that it is also a weak fibration category (see also \cite{Uuye}).

It is worthwhile to describe explicitly a factorization of the morphisms in $\Csep$ into a weak equivalence followed by a fibration. Let $f:A\to B$ be a morphism in $\Csep$. We define $P(f)\in\Csep$ to be the pull back
$$\xymatrix{P(f)\ar[r]\ar[d] & \hom(\Delta^1,B)\ar[d]^{\pi_0} \\
             A \ar[r]^f & B.}$$

We define a morphism $i:A\to P(f)=A\times_{B}\hom(\Delta^1,B)$ to be the one induced by the commutative square
$$\xymatrix{A\ar[d]^{\id}\ar[r] & \hom(\Delta^1,B)\ar[d]^{\pi_0} \\
                 A \ar[r]^{f} & B,}$$
the upper horizontal map being the composite: $A\xrightarrow{f} B\overset{\iota}{\longrightarrow} \hom(\Delta^1,B)$.

We define a morphism $p:P(f)=A\times_{B}\hom(\Delta^1,B)\to B$ to be the composition:
$$P(f)\xrightarrow{}  \hom(\Delta^1,B)\overset{\pi_1} \longrightarrow B.$$

Clearly $f=pi$, and we call this the mapping cylinder factorization.
It is shown in \cite{SchTop3} that the mapping cylinder factorization is a indeed a factorization into a weak equivalence followed by a fibration. We note that this factorization is furthermore \emph{functorial}.
\end{proof}

We now want to show that the weak fibration category $({\Csep},{\cW},{\cF})$ is simplicial. First, it is not hard to see that Definition \ref{d:action} indeed defines a bifunctor
$$\hom(-,-):\cS_{\fin}^{\op}\times\cC\to \cC$$
that commutes with finite limits in every variable separately, and that there are coherent natural isomorphisms
$$\hom(L, \hom(K, A))\cong \hom(K\times L, A),$$
$$\hom(\Delta^0, A)\cong A,$$
for $A$ in $\Csep$ and $K,L$ in $\cS_{\fin}$.

\begin{defn}\label{SC enriched}
Using Definition \ref{d:action}, we can turn $\Csep$ into a category enriched in simplicial sets by defining for every $A,B\in\Csep$ and $n\geq 0$
$$\Map_{\Csep}(A,B)_n:= \Hom_{\Csep}(A,\hom(\Delta^n,B)).$$
\end{defn}
It is not hard to see that for every $K\in \cS_{\fin}$ and $A,B\in \Csep$ we have a natural isomorphism
$$\Map_{\cS}(K,\Map_{\Csep}(A,B))\cong \Map_{\Csep}(A,\hom(K,B)).$$
Since $\Csep$ is enriched in simplicial sets, we can consider the enriched Yoneda embedding
$$Y:A\mapsto \Map_{\Csep}(-,A):\Csep\to \cS^{\Csep^{\op}}.$$

\begin{lem}\label{l:Y commute}
The Yoneda embedding  $Y:\Csep\to \cS^{\Csep^{\op}}$  commutes with finite limits and the simplicial coaction.
\end{lem}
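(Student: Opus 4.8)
The plan is to exploit the fact that both limits and the simplicial coaction (cotensor) in the presheaf category $\cS^{\Csep^{\op}}$ are computed objectwise, so that each assertion reduces to a statement about a single simplicial mapping space $\Map_{\Csep}(B,-)$ and can then be settled degreewise using the two-variable compatibilities of $\hom$ already recorded in the discussion following Definition \ref{d:action}.

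For finite limits, let $A=\lim_i A_i$ be a finite limit in $\Csep$, which exists since $\Csep$ has all finite limits. Because limits in the functor category $\cS^{\Csep^{\op}}$ are pointwise, it suffices to produce, naturally in $B\in\Csep$, an isomorphism of simplicial sets $\Map_{\Csep}(B,\lim_i A_i)\cong\lim_i\Map_{\Csep}(B,A_i)$. As limits of simplicial sets are formed degreewise, I would check this in each simplicial degree $n$: by Definition \ref{SC enriched} the $n$-simplices on the left are $\Hom_{\Csep}(B,\hom(\Delta^n,\lim_i A_i))$. Now $\hom(\Delta^n,-)$ commutes with finite limits (this is exactly the property of $\hom$ recorded after Definition \ref{d:action}, i.e.\ condition (1) of Definition \ref{d:simplicial_weak}), so $\hom(\Delta^n,\lim_i A_i)\cong\lim_i\hom(\Delta^n,A_i)$; and the representable functor $\Hom_{\Csep}(B,-)$ preserves all limits, giving $\Hom_{\Csep}(B,\lim_i\hom(\Delta^n,A_i))\cong\lim_i\Hom_{\Csep}(B,\hom(\Delta^n,A_i))$, which is precisely the degree-$n$ part of $\lim_i\Map_{\Csep}(B,A_i)$. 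These identifications are natural in $n$ and in $B$, hence assemble into the desired isomorphism of presheaves $Y(\lim_i A_i)\cong\lim_i Y(A_i)$.

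For the simplicial coaction, fix $K\in\cS_{\fin}$ and $A\in\Csep$, and identify $Y(\hom(K,A))$ with the cotensor $\hom(K,Y(A))$ formed in $\cS^{\Csep^{\op}}$. Since cotensors in the presheaf category are computed objectwise and the cotensor of simplicial sets is the internal hom, for each $B$ one has $\hom(K,Y(A))(B)=\Map_{\cS}(K,\Map_{\Csep}(B,A))$. The natural isomorphism $\Map_{\cS}(K,\Map_{\Csep}(B,A))\cong\Map_{\Csep}(B,\hom(K,A))$ recorded just before the lemma identifies this with $Y(\hom(K,A))(B)$, naturally in $B$, which is exactly the claim.

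The content of the lemma is thus entirely formal once the objectwise description of limits and cotensors in $\cS^{\Csep^{\op}}$ is in hand, and the only point requiring care is the bookkeeping of naturality (in $B$ and in the simplicial degree) needed to upgrade the degreewise and objectwise isomorphisms to isomorphisms in the functor category. The substantive inputs, namely that $\hom(\Delta^n,-)$ preserves finite limits and the cotensor adjunction isomorphism, are both already established, so no new $C^*$-algebraic argument is required; I expect the mild nuisance to be simply verifying that the canonical comparison maps are the ones being inverted, rather than merely exhibiting abstract bijections.
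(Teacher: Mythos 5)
Your proof is correct and follows essentially the same route as the paper: the paper likewise reduces both claims to the objectwise/degreewise descriptions, dismissing the finite-limit part as clear and reducing the coaction part to the isomorphism $\Map_{\Csep}(B,\hom(K,A))\cong \Map_{\cS}(K,\Map_{\Csep}(B,A))$ recorded just before the lemma. You merely spell out the details (degreewise limits, $\hom(\Delta^n,-)$ preserving finite limits, representables preserving limits) that the paper's proof leaves implicit.
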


\begin{proof}
The fact that $Y$ commutes with finite limits is clear.
It is left to show that there are coherent natural isomorphisms
$$\Map_{\Csep}(-,\hom(K,A))\cong \hom(K,\Map_{\Csep}(-,A))$$
for $K\in \cS_{\fin}$ and $A\in \Csep$.
Thus, for every $K\in \cS_{\fin}$ and $A,B\in \Csep$ we need to supply an isomorphism
$$\Map_{\Csep}(B,\hom(K,A))\cong \Map_{\cS}(K,\Map_{\Csep}(B,A)),$$
but this is clear.
\end{proof}

\begin{lem}\label{l:enriched Top S}
For every $A,B\in \Csep$ we have a natural isomorphism
$$\Sing(\Csep(B,A))\cong\Map_{\Csep}(B,A),$$
where $\Sing$ denotes the singular simplices functor and $\Csep(A,B)$ is the \emph{space} of $*$-homomorphisms defined in Remark \ref{r:Top enrich}.
\end{lem}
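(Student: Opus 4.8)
The plan is to establish the isomorphism degreewise and then check compatibility with the simplicial and functorial structure. Unwinding the definitions, in simplicial degree $n$ the two sides are $\Sing(\Csep(B,A))_n = \Top(|\Delta^n|, \Csep(B,A))$ and $\Map_{\Csep}(B,A)_n = \Hom_{\Csep}(B, \tC(|\Delta^n|, A))$, using $\hom(\Delta^n,A)=\tC(|\Delta^n|,A)$ from Definition \ref{d:action}. Hence it suffices to produce, for every compact metrizable space $X$ (applied to $X=|\Delta^n|$), a bijection
$$\Top(X, \Csep(B,A)) \cong \Hom_{\Csep}(B, \tC(X,A))$$
that is natural in $X$, $A$, and $B$.

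The first step is to rewrite both sides as sets of maps $X\times B\to A$. Since $\Top$ denotes compactly generated weakly Hausdorff spaces, which form a Cartesian closed category (Remark \ref{r:Top enrich}), the exponential law gives $\Top(X,\Top(B,A))\cong\Top(X\times B,A)$; restricting to the subspace $\Csep(B,A)\subseteq\Top(B,A)$ of $*$-homomorphisms then identifies $\Top(X,\Csep(B,A))$ with the set of continuous maps $g\colon X\times B\to A$ such that $g(x,-)$ is a $*$-homomorphism for every $x\in X$. On the other side, a $*$-homomorphism $\phi\colon B\to\tC(X,A)$ determines $\tilde g(x,b):=\phi(b)(x)$; each $\tilde g(x,-)=\ev_x\circ\phi$ is a $*$-homomorphism because evaluation $\ev_x\colon\tC(X,A)\to A$ is one, and conversely such a $g$ yields $\phi(b):=g(-,b)$, which is a $*$-homomorphism since the operations on $\tC(X,A)$ are pointwise. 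So both sides are described by the same data, modulo matching the continuity conditions.

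The one point requiring care — and the main obstacle — is exactly this matching of regularity, since $B$ is not locally compact and the naive exponential law is unavailable outside the compactly generated setting. Concretely, I must check that ``$\phi(b)\in\tC(X,A)$ for all $b$, with $\phi$ a (hence contractive) $*$-homomorphism'' is equivalent to joint continuity of $g$. This follows from the estimate
$$\|g(x,b)-g(x',b')\| \le \|\phi(b-b')\|_{\sup} + \|\phi(b')(x)-\phi(b')(x')\| \le \|b-b'\| + \|\phi(b')(x)-\phi(b')(x')\|,$$
where the second inequality uses that $*$-homomorphisms are contractive: the first term is controlled by $\|b-b'\|$ and the second by continuity of the single function $\phi(b')\in\tC(X,A)$, giving joint continuity of $g$; the reverse implication is immediate. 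Since $X=|\Delta^n|$ is compact metrizable and $B$ is metrizable (Remark \ref{r:Top enrich}), the product $X\times B$ is metrizable, so its compactly generated and ordinary topologies agree and there is no discrepancy between the product used in the exponential law and the usual one.

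Finally I would verify naturality. Naturality in $A$ and $B$ is immediate from the pointwise formula $\tilde g(x,b)=\phi(b)(x)$ defining the bijection. Naturality in $X$ — which encodes compatibility with faces and degeneracies, induced on $\Sing(\Csep(B,A))$ by the cosimplicial space $|\Delta^{\bullet}|$ and on $\Map_{\Csep}(B,A)$ by the $*$-homomorphisms $\tC(|\Delta^n|,A)\to\tC(|\Delta^m|,A)$ — holds because a continuous map $\alpha\colon X'\to X$ sends $g$ to $g\circ(\alpha\times\id_B)$ on one side and $\phi$ to $\tC(\alpha,A)\circ\phi$ on the other, and both have transpose $(x',b)\mapsto\phi(b)(\alpha(x'))$. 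Assembling the degreewise bijections yields the desired natural isomorphism of simplicial sets.
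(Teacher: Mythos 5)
Your proof is correct and has the same overall skeleton as the paper's: identify both sides in simplicial degree $n$ with $\Hom_{\Top}(|\Delta^n|,\Csep(B,A))$ and $\Hom_{\Csep}(B,\tC(|\Delta^n|,A))$ respectively, produce a bijection between these that is natural in $|\Delta^n|$, $A$ and $B$, and assemble it into an isomorphism of simplicial sets. The one difference is how that degreewise bijection is obtained: the paper simply cites \cite[Lemma 2.4]{Uuye}, while you prove it from scratch, via the exponential law for compactly generated weakly Hausdorff spaces together with the estimate $\|\phi(b)(x)-\phi(b')(x')\|\le\|b-b'\|+\|\phi(b')(x)-\phi(b')(x')\|$ coming from contractivity of $*$-homomorphisms. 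That estimate is precisely the equicontinuity argument underlying Uuye's lemma, and you correctly isolated why something like it is needed: $B$ is not locally compact, so the naive compact-open exponential law is unavailable, and it is the automatic contractivity of the slices $g(x,-)$ that restores joint continuity, while your metrizability remarks dispose of the compactly-generated-versus-ordinary product discrepancy. In effect your write-up is a self-contained proof of the cited lemma plus the naturality and simplicial-compatibility check that the paper compresses into ``from this the result clearly follows''; both routes are valid, yours trading brevity for independence from the reference.
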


\begin{proof}
By \cite[Lemma 2.4]{Uuye}, for every $n\geq 0$ there is a natural isomorphism
$$\Sing(\Csep(B,A))_n=\Hom_{\Top} (|\Delta^n|,\Csep(B,A))\cong \Hom_{\Csep}(B,\tC(|\Delta^n|,A)) =$$
$$=\Hom_{\Csep}(B,\hom(\Delta^n,A))=\Map_{\Csep}(B,A)_n.$$
From this the result clearly follows.
\end{proof}

\begin{prop}\label{p:transfer}
Consider the  Yoneda embedding  $Y:\Csep\to \cS^{\Csep^{\op}}$, and let $\cS^{\Csep^{\op}}$ be endowed with the \emph{projective} model structure. Let $p: A\to B$ be a map in $\Csep$. Then the following hold:
\begin{enumerate}
\item The map $p$ is a homotopy equivalence in $\Csep$ iff $Y(p)$ is a weak equivalence in $\cS^{\Csep^{\op}}$.
\item The map $p$ is a Schochet fibration in $\Csep$ iff $Y(p)$ is a fibration in $\cS^{\Csep^{\op}}$.
\end{enumerate}
\end{prop}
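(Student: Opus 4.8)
The plan is to reduce both statements to the standard characterizations of weak equivalences and fibrations in the projective model structure on $\cS^{\Csep^{\op}}$, where these are detected objectwise. Recall that in the projective structure a map of simplicial presheaves is a weak equivalence (resp. fibration) if and only if its evaluation at every object $D\in\Csep$ is a weak equivalence (resp. Kan fibration) of simplicial sets. Under the Yoneda embedding we have $Y(A)(D)=\Map_{\Csep}(D,A)$, so by Lemma~\ref{l:enriched Top S} we may identify $Y(A)(D)\cong\Sing(\Csep(D,A))$ naturally in $D$. Thus $Y(p)$ evaluated at $D$ is, up to the natural isomorphism of Lemma~\ref{l:enriched Top S}, the map $\Sing(p_*):\Sing(\Csep(D,A))\to\Sing(\Csep(D,B))$ induced by postcomposition with $p$.

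For part (1), I would argue that $p$ is a homotopy equivalence in $\Csep$ if and only if $\Csep(D,A)\to\Csep(D,B)$ is a homotopy equivalence of topological spaces for every $D$. The forward direction is immediate: a homotopy inverse and homotopies for $p$, being given by $*$-homomorphisms and homotopies through $\hom(\Delta^1,-)$, induce homotopy inverses and homotopies on each mapping space by functoriality. For the converse one takes $D=A$ and $D=B$ and applies the hypothesis to the class of $\id_A$ and $\id_B$, extracting a homotopy inverse $q:B\to A$ together with the required homotopies $qp\sim\id_A$ and $pq\sim\id_B$; these homotopies in the mapping spaces correspond precisely to homotopies of $*$-homomorphisms in the sense of Definition~\ref{d:action}. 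Since $\Sing$ sends homotopy equivalences of (compactly generated Hausdorff) spaces to weak equivalences of simplicial sets, and since a map of topological spaces is a homotopy equivalence exactly when it is a weak equivalence detected through all mapping spaces as above, the objectwise condition for $Y(p)$ to be a projective weak equivalence matches exactly the condition that $p$ be a homotopy equivalence.

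For part (2), I would unwind the definition of a Schochet fibration and match it against the objectwise lifting criterion for a projective fibration. By the Quillen adjunction $|{-}|\dashv\Sing$, the map $\Sing(p_*):\Sing(\Csep(D,A))\to\Sing(\Csep(D,B))$ is a Kan fibration if and only if $p_*:\Csep(D,A)\to\Csep(D,B)$ is a Serre fibration of topological spaces, i.e. has the right lifting property against all inclusions $|\Lambda^n_k|\hookrightarrow|\Delta^n|$. The definition of a Schochet fibration asserts exactly the right lifting property of $p_*$ against the single inclusion $\{0\}\hookrightarrow[0,1]$, for every $D$. The key step, which I expect to be the main obstacle, is to show these two conditions coincide: that lifting against the one-point inclusion into the interval for all $D$ is equivalent to the full Serre fibration condition. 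I would handle this by exploiting the $C^*$-cotensor structure: the space $\Csep(D,\hom(\Delta^n,A))\cong\Csep(D,A)^{|\Delta^n|}$, so testing lifting against $|\Lambda^n_k|\hookrightarrow|\Delta^n|$ for $\Csep(D,-)$ can be rephrased, via the retraction of the pair $(|\Delta^n|,|\Lambda^n_k|)$ onto $(|\Delta^1|,\{0\})$, as testing the path-lifting property against $D'=\hom(K,D)$ for suitable finite simplicial $K$, thereby reducing the horn-filling conditions to the single Schochet condition applied to varying source objects. Granting this reduction, the objectwise fibration criterion for $Y(p)$ coincides with $p$ being a Schochet fibration, completing the proof.
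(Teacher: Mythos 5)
Your part (1) and the ``$Y(p)$ projective fibration $\Rightarrow$ $p$ Schochet'' half of part (2) are fine, and they follow the paper's route: part (1) is the enriched-Yoneda detection argument (the paper gets it by combining Lemma \ref{l:enriched Top S} with \cite[Proposition 1.2.4.1]{Lur}, which is exactly your $\pi_0$-level argument with $D=A$ and $D=B$), and the easy half of (2) is the same $|-|\dashv\Sing$ transposition the paper uses, since $|\Lambda^1_0|\hookrightarrow|\Delta^1|$ is exactly $\{0\}\hookrightarrow[0,1]$.

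The genuine gap is the step you yourself flag as the main obstacle and then ``grant'': Schochet fibration $\Rightarrow$ $\Csep(D,A)\to\Csep(D,B)$ is a Serre fibration for every $D$. The paper does not prove this either; it cites \cite[Proposition 2.18]{Uuye}, which is a real theorem with content. Your proposed route to it cannot work as stated, because it uses the cotensor adjunction with the wrong variance. The natural isomorphism available in $\Csep$ is $\Map_{\Top}(|K|,\Csep(D,A))\cong\Csep(D,\hom(K,A))$: the finite simplicial set $K$ moves onto the \emph{targets} $A,B$, never onto the test object $D$. There is no isomorphism $\Map_{\Top}(|K|,\Csep(D,A))\cong\Csep(\hom(K,D),A)$, i.e.\ $\tC(|K|)\otimes D$ is not a copower of $D$ by $|K|$. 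Concretely, for $K=\Delta^0\sqcup\Delta^0$ one has $\hom(K,D)=D\oplus D$, and a $*$-homomorphism $D\oplus D\to A$ is a pair of $*$-homomorphisms $D\to A$ with \emph{orthogonal} images, not an arbitrary pair (the copower by two points is the free product, not the direct sum); so horn-lifting problems cannot be re-sourced to objects $D'=\hom(K,D)$. With the correct variance, the homeomorphism of pairs $(|\Delta^n|,|\Lambda^n_k|)\cong(|\Delta^{n-1}|\times[0,1],|\Delta^{n-1}|\times\{0\})$ reduces objectwise Serre lifting to the statement that $\hom(\Delta^{n-1},p)$ is again a Schochet fibration for all $n$, and \emph{that} stability is the non-formal heart of the matter: path lifting alone does not imply the Serre condition for maps of spaces, and the leverage must come from quantifying over all $D$. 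The standard way to extract it is to test the Schochet condition against the mapping path pullback $D=A\times_B\tC(|\Delta^1|,B)$ itself with the identity map, concluding that the gap map $\tC(|\Delta^1|,A)\to A\times_B\tC(|\Delta^1|,B)$ admits a $*$-homomorphic section; sections survive applying $\tC(|\Delta^{n-1}|)\otimes(-)$, which then produces all horn lifts. Without this (or the citation of Uuye), your proof of the forward direction of (2) is incomplete.
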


\begin{proof}\
\begin{enumerate}
\item This follows from Lemma \ref{l:enriched Top S} and \cite[Proposition 1.2.4.1]{Lur}.
\item Suppose that $p$ is a Schochet fibration. Then, by \cite[Proposition 2.18]{Uuye}, for every $D\in\Csep$ the induced map $\Csep(D,A)\to \Csep(D,B)$ is a Serre fibration. It follows from Lemma \ref{l:enriched Top S} and the fact that that the functor $\Sing:\Top\to\cS$ sends Serre fibrations to Kan fibrations that for every $D\in\Csep$ the induced map $\Map_{\Csep}(D,A)\to \Map_{\Csep}(D,B)$ is a Kan fibration. Thus $Y(p)$ is a projective fibration.

     Now suppose that $Y(p)$ is a projective fibration, that is,
     for every $D\in\Csep$ the induced map $\Map_{\Csep}(D,A)\to \Map_{\Csep}(D,B)$ is a Kan fibration. Let $D\in\Csep$, and consider a commutative diagram of the form
$$\xymatrix{
|\Delta^{\{0\}}| \ar[d] \ar[r] & \Csep(D,A)\ar[d]^{p_*}\\
|\Delta^{1}|\ar[r] & \Csep(D,B).}$$
We need to show that there exists a lift $|\Delta^{1}|\map \Csep(D,A)$. Using the fact that we have an adjoint pair
$$|-|:\cS\rightleftarrows\Top:\Sing$$
and Lemma \ref{l:enriched Top S}, we see that it is enough to find a lift in the following diagram:
$$\xymatrix{
\Delta^{\{0\}} \ar[d] \ar[r] & \Map_{\Csep}(D,A)\ar[d]^{p_*}\\
\Delta^{1}\ar[r] & \Map_{\Csep}(D,B).}$$
But such a lift exists since $\Map_{\Csep}(D,A)\to \Map_{\Csep}(D,B)$ is a Kan fibration.
\end{enumerate}
\end{proof}

\begin{prop}\label{p:simplicial}
With the cotensor action of Definition \ref{d:action}, $\Csep$ is a simplicial weak fibration category (see Definition \ref{d:simplicial_weak}).
\end{prop}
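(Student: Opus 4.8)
The plan is to verify the two axioms in Definition~\ref{d:simplicial_weak} by transporting them across the Yoneda embedding $Y:\Csep\to\cS^{\Csep^{\op}}$. The point is that Proposition~\ref{p:transfer} already identifies homotopy equivalences and Schochet fibrations in $\Csep$ with weak equivalences and projective fibrations in $\cS^{\Csep^{\op}}$, and Lemma~\ref{l:Y commute} tells us that $Y$ is compatible with both finite limits and the simplicial cotensor action. Since the projective model structure on $\cS^{\Csep^{\op}}$ is a genuine simplicial model category, its pushout--product (equivalently, pullback--cotensor) axiom $\mathrm{SM7}$ is already available; my strategy is to reflect the relevant instance of $\mathrm{SM7}$ back into $\Csep$.

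First I would record the structural data: by the remarks immediately preceding Definition~\ref{SC enriched}, the bifunctor $\hom(-,-)$ from Definition~\ref{d:action} commutes with finite limits in each variable separately and carries the coherent isomorphisms $\hom(L,\hom(K,A))\cong\hom(K\times L,A)$ and $\hom(\Delta^0,A)\cong A$. This is exactly condition~(1) of Definition~\ref{d:simplicial_weak}, so only condition~(2) requires real work.

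For condition~(2), fix a cofibration $j:K\to L$ in $\cS_{\fin}$ and a Schochet fibration $p:A\to B$ in $\Csep$, and consider the induced map
$$q:\hom(L,A)\to\hom(K,A)\prod_{\hom(K,B)}\hom(L,B).$$
I would apply $Y$ to $q$. Because $Y$ preserves finite limits and intertwines $\hom$ with the simplicial coaction on $\cS^{\Csep^{\op}}$ (Lemma~\ref{l:Y commute}), the map $Y(q)$ is canonically identified with the pullback--cotensor map
$$\hom\bigl(L,Y(A)\bigr)\to \hom\bigl(K,Y(A)\bigr)\prod_{\hom(K,Y(B))}\hom\bigl(L,Y(B)\bigr)$$
built from the cofibration $j$ and the morphism $Y(p)$ in $\cS^{\Csep^{\op}}$. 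By Proposition~\ref{p:transfer}(2), $Y(p)$ is a projective fibration, so the simplicial model category axiom for the projective model structure shows that $Y(q)$ is a projective fibration, which is moreover acyclic if either $j$ is a trivial cofibration or $Y(p)$ is a trivial fibration. Invoking Proposition~\ref{p:transfer} in reverse — part~(2) to detect fibrations and part~(1) together with the two-out-of-three property to detect acyclicity — I conclude that $q$ is a Schochet fibration in $\Csep$, acyclic whenever $j$ or $p$ is. This establishes condition~(2) and completes the proof.

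\textbf{The main obstacle} I anticipate is purely bookkeeping rather than conceptual: one must check that $Y(q)$ is genuinely the pullback--cotensor map of $j$ and $Y(p)$, not merely a map between the same objects. This amounts to verifying that the coherence isomorphisms of Lemma~\ref{l:Y commute} are compatible with the pullback defining the target of $q$, so that the comparison square commutes on the nose. A secondary subtlety is that Proposition~\ref{p:transfer}(2) characterizes fibrations but the detection of \emph{acyclic} fibrations must be routed through part~(1), using that a Schochet fibration that is also a weak equivalence is precisely a map sent by $Y$ to an acyclic projective fibration; here the two-out-of-three property of $\cW$ guaranteed by Proposition~\ref{p:SC_WFC} does the remaining work.
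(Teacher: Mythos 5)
Your proposal is correct and follows essentially the same route as the paper: the paper's proof also reduces to condition~(2) of Definition~\ref{d:simplicial_weak} (condition~(1) having been verified after Proposition~\ref{p:SC_WFC}) and then cites exactly the three ingredients you use — Lemma~\ref{l:Y commute}, Proposition~\ref{p:transfer}, and the fact that the projective model structure on $\cS^{\Csep^{\op}}$ is simplicial. Your write-up merely makes explicit the bookkeeping the paper leaves implicit (identifying $Y(q)$ with the pullback--cotensor map and reflecting fibrancy and acyclicity back along $Y$, where in fact Proposition~\ref{p:transfer}(1), being an equivalence, detects weak equivalences directly without needing two-out-of-three).
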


\begin{proof}
By what is explained after Proposition \ref{p:SC_WFC}, we only need to show that for every cofibration $j:K\to L$ in $\cS_{\fin}$ and every fibration $p:A\to B$ in $\Csep$, the induced map:
$$\hom(L,A)\to \hom(K,A) \prod_{\hom(K,B)}\hom(L,B)$$
is a fibration (in $\Csep$), which is acyclic if either $j$ or $p$ is.
But this follows from Lemma \ref{l:Y commute}, Proposition \ref{p:transfer} and the fact that the projective model structure on $\cS^{\Csep^{\op}}$ is simplicial (see for example \cite[Remark A.3.3.4]{Lur}).
\end{proof}

\noindent
\subsection{The model structure on $\Pro(\Csep)$.}\label{ss:model pro C}
We now turn to our main theorem.
\begin{thm}\label{t:proC_main}
There exists a simplicial model category structure on $\Pro(\Csep)$ such that:
\begin{enumerate}
\item The weak equivalences are $\mathbf{W} := Lw^{\cong}(\cW)$.
\item The fibrations are $\mathbf{F} := R(Sp^{\cong}(\cF))$.
\item The cofibrations are $\mathbf{C} :={}^{\perp}(\cF\cap \cW)$.
\end{enumerate}

Moreover, this model category is cocombinatorial, with set of generating fibrations $\cF$ and set of generating acyclic fibrations $\cF\cap \cW$.

The model category $\Pro(\Csep)$ has the following further properties:
\begin{enumerate}
\item The acyclic fibrations are given by $\mathbf{F}\cap\mathbf{W} = R(Sp^{\cong}(\cF\cap\cW))$.
\item Every object in $\Pro(\Csep)$ is cofibrant.
\item $\Pro(\Csep)$ is proper.
\end{enumerate}
\end{thm}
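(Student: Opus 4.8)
The plan is to verify that the triple $(\Csep,\cW,\cF)$ satisfies all the hypotheses of Theorem \ref{t:tool_main} and then to invoke that theorem directly, since the assertion here is word-for-word its conclusion. The structural prerequisites are already in hand: $\Csep$ may be replaced by an equivalent small category (Remark \ref{r:conventions}), it is a weak fibration category (Proposition \ref{p:SC_WFC}), and it is simplicial with respect to the cotensor $\hom(-,-)$ of Definition \ref{d:action} (Proposition \ref{p:simplicial}). It therefore remains only to check the four numbered conditions of Theorem \ref{t:tool_main}.

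For condition (1), finite colimits exist in $\Csep$: the zero algebra is an initial object, while finite coproducts (full free products), pushouts, and coequalizers of separable $C^*$-algebras are again separable, being generated by the images of separable data. For condition (2), the terminal object of $\Csep$ is the zero algebra $0$, and each map $A\to 0$ is a Schochet fibration; most directly, any lifting problem against $A\to 0$ is solved by the constant path, while formally one notes via Proposition \ref{p:transfer}(2) that $Y(A)=\Map_{\Csep}(-,A)$ is objectwise a Kan complex (by Lemma \ref{l:enriched Top S}, $\Map_{\Csep}(D,A)=\Sing(\Csep(D,A))$) and hence projectively fibrant over the terminal presheaf $Y(0)$. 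For condition (3), I would observe that the simplicial homotopy relation on $\Csep$ coincides with the classical $C^*$-algebraic one: by Definition \ref{SC enriched} an edge of $\Map_{\Csep}(A,B)$ is precisely a $*$-homomorphism $A\to\hom(\Delta^1,B)$, i.e.\ a homotopy, so $\pi_0\Map_{\Csep}(A,B)=[A,B]$ and a simplicial homotopy equivalence is exactly a homotopy equivalence, which is by definition a morphism in $\cW$.

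The step requiring a genuine argument, and which I expect to be the main obstacle, is condition (4): every acyclic fibration admits a section. Let $p:A\to B$ lie in $\cF\cap\cW$. Since $p\in\cW$, there is a morphism $q:B\to A$ together with a homotopy $H$ from $p\circ q$ to $\id_B$, which by Lemma \ref{l:enriched Top S} we may view as a path $t\mapsto H_t$ in the space $\Csep(B,B)$ with $H_0=p\circ q$ and $H_1=\id_B$. Because $p$ is a Schochet fibration, the induced map $p_*:\Csep(B,A)\to\Csep(B,B)$ is a Serre fibration by \cite[Proposition 2.18]{Uuye}. The point $q\in\Csep(B,A)$ satisfies $p_*(q)=H_0$, so path lifting for the Serre fibration $p_*$ yields a path $t\mapsto\widetilde{H}_t$ in $\Csep(B,A)$ with $\widetilde{H}_0=q$ and $p_*\widetilde{H}_t=H_t$ for all $t$. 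Setting $s:=\widetilde{H}_1$ produces a $*$-homomorphism with $p\circ s=p_*(s)=H_1=\id_B$, that is, a section of $p$.

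With conditions (1)--(4) verified, Theorem \ref{t:tool_main} applies verbatim to $(\Csep,\cW,\cF)$ and yields the asserted simplicial, cocombinatorial, proper model structure on $\Pro(\Csep)$, with generating fibrations $\cF$ and generating acyclic fibrations $\cF\cap\cW$, with acyclic fibrations $R(Sp^{\cong}(\cF\cap\cW))$, and with every object cofibrant. The only point demanding care beyond bookkeeping is the section-lifting step above; everything else amounts to transporting the already-established weak fibration and simplicial structures through the enriched Yoneda embedding of Proposition \ref{p:transfer}.
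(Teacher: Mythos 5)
Your proposal is correct and follows the same strategy as the paper: both reduce the theorem to checking the four hypotheses of Theorem \ref{t:tool_main} for the (essentially small) simplicial weak fibration category $(\Csep,\cW,\cF)$ furnished by Propositions \ref{p:SC_WFC} and \ref{p:simplicial}, and then invoke that theorem verbatim. The only substantive difference is condition (4), where the paper simply cites \cite[Proposition 1.13 (a)]{SchTop3} while you give a self-contained argument (lift the homotopy $p\circ q\simeq\id_B$ through the Schochet fibration $p$ starting at $q$, and take the endpoint of the lifted path as the section); this argument is valid---it is essentially the classical proof of Schochet's result---and in fact the path-lifting you need is already immediate from the definition of a Schochet fibration with $D=B$, so the appeal to \cite[Proposition 2.18]{Uuye} is a harmless redundancy.
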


\begin{proof}
The triple $({\Csep},{\cW},{\cF})$ is an (essentially) small simplicial weak fibration category by Proposition \ref{p:SC_WFC} and Proposition \ref{p:simplicial}. Thus it remains to show that $\Csep$ satisfies the conditions of Theorem \ref{t:tool_main}.

\begin{enumerate}
\item The fact that $\Csep$ has finite colimits follows from the existence of amalgamated free products in $\Csep$ (see \cite{Ped}).
\item Let $A$ be an object in $\Csep$. We need to show that the map $A\to 0$ is a Schochet fibration. Let $D\in\Csep$ and let
$$\xymatrix{
\text{$\{0\}$} \ar[d] \ar[r]^f & \Csep(D,A)\ar[d]^{p_*}\\
[0,1]\ar[r] & \Csep(D,0)\cong *}$$
be a commutative diagram. We can define a lift $[0,1]\map \Csep(D,A)$ in the diagram above to be the constant map at $f(0)$, so we are done.
\item By Lemma \ref{l:enriched Top S}, a map in $\Csep$ is a weak equivalence iff that is a homotopy equivalence in the simplicial category $\Csep$.
\item The fact that every map in $\cF\cap\cW$ admits a section is shown in \cite[Proposition 1.13 (a)]{SchTop3}.
\end{enumerate}
\end{proof}

\begin{rem}\label{r:proC_main_dual}
By Theorem \ref{t:proC_main} and Remark \ref{r:tool_main}, there exists a simplicial model category structure on $\Pro(\Csep)^{\op}\cong\Ind(\Csep^{\op})$, given by $(\Ind(\Csep^{\op}),\mathbf{W}^{\op},\mathbf{C}^{\op},\mathbf{F}^{\op})$,
with the following properties:
\begin{enumerate}
\item The model category $\Ind(\Csep^{\op})$ is combinatorial, with set of generating cofibrations $\cF^{\op}$ and set of generating acyclic cofibrations $\cF^{\op}\cap \cW^{\op}$.
\item Every object in $\Ind(\Csep^{\op})$ is fibrant.
\item $\Ind(\Csep^{\op})$ is proper.
\end{enumerate}
We note that the pro picture is more in accordance with the tradition in noncommutative geometry, while the dual ind picture is more in alliance with the conventions in homotopy theory. We will thus be using both pictures, at our convenience, throughout the paper.
\end{rem}

\subsection{Relation to the $\infty$-category $\Pro(\Csepinf)$.}\label{ss:relation infty}
As we explained in the introduction, there are two natural ways of considering $\Csep$ as an $\infty$-category. One is to consider $\Csep$ as a topologically enriched category and take its topological nerve as in \cite[Section 1.1.5]{Lur}. This approach was taken by the third author in \cite{MahNSH}, where the opposite of this $\infty$-category was called ``the  $\infty$-category of pointed compact metrizable noncommutatives spaces". The other approach is to take the $\infty$-localization of $\Csep$ as a weak fibration category. (See Appendix \ref{ss:infinity} for  the definition of the $\infty$-localization of a relative category.) This $\infty$-category will be denoted by $\Csepinf$. In Proposition \ref{p:equiv_infinity} we show that these two ways are equivalent. 

\begin{rem}
The notation used by the third author in \cite{MahNSH} to denote the $\infty$-category of pointed compact metrizable noncommutative spaces is $(\Csepinf)^\op$, so by Proposition \ref{p:equiv_infinity} we have no ambiguity of notation.
\end{rem}

\begin{prop}\label{p:equiv_infinity}
The $\infty$-category $(\Csepinf)^\op$ is naturally equivalent to the $\infty$-category of pointed compact metrizable noncommutatives spaces defined in \cite{MahNSH}.
\end{prop}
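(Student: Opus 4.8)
The plan is to reduce the statement to an identification of $\Csepinf$ with the homotopy coherent nerve of the Kan-enriched category $\Csep$, and then to obtain that identification from the simplicial model structure on $\Pro(\Csep)$ constructed in Theorem \ref{t:proC_main}. First I would unwind the definition of the $\infty$-category in \cite{MahNSH}: by construction it is the opposite of the \emph{topological} nerve of $\Csep$, regarded as a category enriched in $\Top$ via the spaces $\Csep(A,B)$ of Remark \ref{r:Top enrich}. Since the topological nerve is computed by applying $\Sing$ to the mapping spaces and then taking the homotopy coherent nerve, Lemma \ref{l:enriched Top S} identifies it with the homotopy coherent nerve $\mathrm{N}(\Csep^{\Delta})$ of the simplicially enriched category $\Csep^{\Delta}$ whose mapping objects are the $\Map_{\Csep}(A,B)$ (these are Kan complexes, being of the form $\Sing$ of a space). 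Passing to opposites, it therefore suffices to prove $\Csepinf \simeq \mathrm{N}(\Csep^{\Delta})$.

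Next I would place $\Csep$ inside the simplicial model category $\Pro(\Csep)$ as the full subcategory of simple objects. Every object of $\Pro(\Csep)$ is cofibrant (Theorem \ref{t:proC_main}), and each simple object $A\in\Csep$ is fibrant: the map $A\to 0$ is a Schochet fibration (every object of the weak fibration category $\Csep$ is fibrant, as verified in the proof of Theorem \ref{t:proC_main}), and viewed as a morphism of constant pro-objects over a one-point poset it is a special $\cF$-map, since the relevant matching object is terminal; hence it lies in $\mathbf{F}=R(Sp^{\cong}(\cF))$. Thus the objects of $\Csep$ are fibrant-cofibrant in $\Pro(\Csep)$. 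Moreover the simplicial enrichment of $\Pro(\Csep)$ restricts on simple objects to $\Map_{\Csep}$, since for $A,B\in\Csep$ one has $\Map_{\Pro(\Csep)}(A,B)_n=\Hom_{\Pro(\Csep)}(A,\hom(\Delta^n,B))=\Hom_{\Csep}(A,\hom(\Delta^n,B))=\Map_{\Csep}(A,B)_n$, all objects involved being simple.

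Invoking the standard fact that the underlying $\infty$-category of a simplicial model category is the homotopy coherent nerve of its fibrant-cofibrant objects (\cite{Lur}), I conclude that $\Pro(\Csep)_\infty$ contains $\mathrm{N}(\Csep^{\Delta})$ as the full subcategory spanned by the objects of $\Csep$. Finally I would compare this with the $\infty$-localization: the relative functor $(\Csep,\cW)\to(\Pro(\Csep),\mathbf{W})$ induces $\Csepinf\to\Pro(\Csep)_\infty$, which is derived fully faithful by Theorem \ref{t:BarHarHor}. Since the objects of $\Csepinf$ are the objects of $\Csep$, its essential image is exactly the full subcategory of $\Pro(\Csep)_\infty$ spanned by those objects, namely $\mathrm{N}(\Csep^{\Delta})$; being fully faithful and essentially surjective onto this image, $\Csepinf\simeq\mathrm{N}(\Csep^{\Delta})$, which together with the first paragraph proves the proposition.

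The main obstacle, and the step requiring the most care, is the bookkeeping that tethers the three enrichments together: verifying that the topological nerve of \cite{MahNSH} agrees with $\mathrm{N}(\Csep^{\Delta})$ through Lemma \ref{l:enriched Top S}, and that the restriction of the simplicial structure on $\Pro(\Csep)$ to simple objects reproduces $\Map_{\Csep}$ on the nose rather than some derived variant. Once these compatibilities are in place, the equivalence is a formal consequence of the model structure of Theorem \ref{t:proC_main} together with the derived full faithfulness of Theorem \ref{t:BarHarHor}.
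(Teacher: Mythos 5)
Your proposal is correct and follows essentially the same route as the paper's proof: both reduce the statement via Lemma \ref{l:enriched Top S} to identifying $\Csepinf$ with the coherent nerve on the mapping spaces $\Sing(\Csep(A,B))$, using that objects of $\Csep$ are fibrant--cofibrant in the simplicial model category $\Pro(\Csep)$ of Theorem \ref{t:proC_main} and that $\Csepinf\to\Pro(\Csep)_\infty$ is derived fully faithful by Theorem \ref{t:BarHarHor}. Your write-up is in fact somewhat more explicit than the paper's (e.g.\ in verifying that simple objects are fibrant via the special-map criterion, and in phrasing the conclusion through the coherent nerve of fibrant--cofibrant objects rather than just an equivalence of mapping spaces), but the underlying argument is the same.
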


\begin{proof}
Recall that $\Csep$ is a simplicial weak fibration category. The simplicial coaction is given by:
$$\hom(K, A):=\tC(|K|)\otimes A=\tC(|K|,A)\in \Csep^\op,$$
for $K$ in $\cS_{\fin}$ and $A$ in $\Csep$.
By Theorem \ref{t:proC_main}, $\Pro(\Csep)$ is a simplicial model category where for every $K$ in $\cS_{\fin}$ and $\{A_t\}_{t\in T}$ in $\Pro(\Csep^\op)$ the simplicial coaction is just objectwise
$$\hom(K, \{A_t\}_{t\in T})\cong \{\hom(K, A_t)\}_{t\in T}.$$
Furthermore, for every $A= \{A_t\}_{t\in T}$ and $B= \{B_s\}_{s\in S}$ in $\Pro(\Csep)$ the simplicial enrichment is given by
$$\Map_{\Pro(\Csep)}(A,B)_n=\Hom_{\Pro(\Csep)}( A,\hom(\Delta^n,B)),$$
for $n\geq 0$.
In particular, if $A$ and $B$ belong to $\Csep$ the simplicial enrichment is given by
$$\Map_{\Pro(\Csep)}(A,B)_n=\Hom_{\Csep}( A,\hom(\Delta^n,B))=\Map_{\Csep}(A,B)_n,$$
for $n\geq 0$.
By Lemma \ref{l:enriched Top S} we have a natural isomorphism
$$\Sing(\Csep(A,B))\cong\Map_{\Csep}(A,B).$$
By Theorem \ref{t:BarHarHor} we deduce that the natural functor
$$\Csepinf\to\Pro(\Csep)_\infty$$
is derived fully faithful.  Furthermore, since $\Pro(\Csep)$ is a simplicial model category and every object in $\Csep$ is both fibrant and cofibrant in $\Pro(\Csep)$, we have that
$$\Map_{\Pro(\Csep)_\infty}(A,B)\simeq\Map_{\Pro(\Csep)}(A,B).$$
Thus we obtain
$$\Map_{\Csepinf}(A,B)\simeq\Sing(\Csep(A,B)).$$
\end{proof}

In \cite{MahNSH}, the $\infty$-category $\Ne\cS_*$ of pointed noncommutative spaces (that are not necessarily compact) was defined to be the ind-category of the $\infty$-category of pointed compact metrizable noncommutatives spaces. We thus obtain the following proposition:

\begin{prop}\label{p:noncomm_space}
We have a natural equivalence of $\infty$-categories
$$\Ne\cS_*\simeq(\Ind(\Csep^\op))_\infty$$
(see Remark \ref{r:proC_main_dual}).
\end{prop}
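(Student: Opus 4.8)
The plan is to chain together the definitional identifications with the two equivalences already established in the excerpt. The statement to prove, Proposition \ref{p:noncomm_space}, asserts a natural equivalence $\Ne\cS_*\simeq(\Ind(\Csep^\op))_\infty$. First I would recall the definition from \cite{MahNSH}: the $\infty$-category $\Ne\cS_*$ of pointed noncommutative spaces is defined to be the $\infty$-categorical ind-completion of the $\infty$-category of pointed compact metrizable noncommutative spaces. By Proposition \ref{p:equiv_infinity}, that latter $\infty$-category is naturally equivalent to $(\Csepinf)^\op$. Applying the $\infty$-categorical ind-construction to both sides of this equivalence yields
$$\Ne\cS_*\simeq\Ind\big((\Csepinf)^\op\big).$$
This is the unstable, purely $\infty$-categorical half of the argument, and it is essentially formal once the definition is unwound.

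The second half is to identify $\Ind\big((\Csepinf)^\op\big)$ with $(\Ind(\Csep^\op))_\infty$, the underlying $\infty$-category of the model structure of Remark \ref{r:proC_main_dual}. Here I would invoke Remark \ref{r:BarHarHor}, which is the dual ind-form of Theorem \ref{t:BarHarHor} specialized to our weak fibration category: it supplies a natural equivalence
$$\Ind\big((\Csep)_\infty^{\op}\big)\simeq \big(\Ind(\Csep^{\op})\big)_\infty.$$
Writing $\Csepinf$ for $(\Csep)_\infty$, the left-hand side is exactly $\Ind\big((\Csepinf)^\op\big)$, so combining this with the equivalence from the previous paragraph gives the desired chain
$$\Ne\cS_*\simeq\Ind\big((\Csepinf)^\op\big)\simeq\big(\Ind(\Csep^\op)\big)_\infty,$$
which is the assertion of the proposition.

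The main thing to verify carefully, rather than any deep obstacle, is that the hypotheses of Theorem \ref{t:tool_main} are genuinely in force so that Remark \ref{r:BarHarHor} applies to $\Csep$; but this was already checked in the proof of Theorem \ref{t:proC_main}, so I may cite it directly. The one subtlety worth a sentence is that the ind-construction must be understood in the $\infty$-categorical sense on both sides and that the equivalence of Proposition \ref{p:equiv_infinity} is natural enough to be preserved under $\Ind(-)$; since $\Ind$ is a functor on $\infty$-categories and equivalences are sent to equivalences, this is automatic. Thus the entire proof reduces to stringing together the definition of $\Ne\cS_*$, Proposition \ref{p:equiv_infinity}, and Remark \ref{r:BarHarHor}, and no new homotopy-theoretic input is required.
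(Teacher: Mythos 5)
Your proof is correct and follows exactly the paper's own argument: unwind the definition of $\Ne\cS_*$ as an ind-completion, apply Proposition \ref{p:equiv_infinity} to identify the compact objects with $(\Csepinf)^\op$, and then use Remark \ref{r:BarHarHor} to identify $\Ind((\Csepinf)^\op)$ with $(\Ind(\Csep^\op))_\infty$. The extra care you take about the hypotheses of Theorem \ref{t:tool_main} and functoriality of $\Ind$ is sound but not needed beyond what the paper already cites.
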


\begin{proof}
In light of Proposition \ref{p:equiv_infinity} we have that
$$\Ne\cS_*\simeq\Ind((\Csep^\op)_\infty).$$
By Remark \ref{r:BarHarHor} we have a natural equivalence of $\infty$-categories
$$\Ind((\Csep^\op)_\infty)\simeq(\Ind(\Csep^\op))_\infty.$$
\end{proof}

As we have explained in Remark \ref{r:proC_main_dual}, $\Ind(\Csep^\op)$ is a proper combinatorial pointed simplicial model category, and the domains of the generating cofibrations of $\Ind(\Csep^\op)$ can be taken to be cofibrant (note that every object in $\Csep^\op$ is cofibrant in $\Ind(\Csep^\op)$). Thus, as explained in Appendix \ref{s:stab_model}, we can construct the stable left proper combinatorial simplicial model category $\SpN(\Ind(\Csep^\op))$, together with the natural simplicial left Quillen functor
$$G_0:\Ind(\Csep^\op)\to\SpN(\Ind(\Csep^\op)).$$

\begin{prop}\label{p:N_spectra}
The $\infty$-category $\SpN(\Ind(\Csep^\op))_\infty$ is naturally equivalent to the $\infty$-category $\Sp(\Ne\cS_*)$ considered in \cite{MahNSH}, and
$$\mathbb{L}G_0:\Ind(\Csep^\op)_\infty\to \SpN(\Ind(\Csep^\op))_\infty$$
is equivalent to
$$\Sigma^{\infty}:\Ne\cS_*\to \Sp(\Ne\cS_*)$$
under this natural equivalence.
\end{prop}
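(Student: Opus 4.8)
The plan is to deduce both assertions from the general comparison between the model-categorical stabilization construction $\SpN(-)$ of Appendix \ref{s:stab_model} and the $\infty$-categorical stabilization $\Sp(-)$ of Appendix \ref{s:stab_infinity}, combined with the identification of underlying $\infty$-categories supplied by Proposition \ref{p:noncomm_space}. The conceptual point is that for a sufficiently nice pointed model category $\cM$, the Bousfield--Friedlander--Hovey construction $\SpN(\cM)$ presents the stabilization of the underlying $\infty$-category $\cM_\infty$, and the suspension spectrum Quillen functor $G_0$ presents the $\infty$-categorical functor $\Sigma^\infty$. Writing $\cM := \Ind(\Csep^\op)$, the present proposition is then the specialization of this comparison to our situation.

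First I would verify that $\cM$ meets the hypotheses needed to run the comparison. By Remark \ref{r:proC_main_dual}, $\cM$ is a proper combinatorial pointed simplicial model category, and, as recorded in the discussion preceding the statement, the domains of its generating cofibrations can be taken cofibrant, since every object of $\Csep^\op$ is already cofibrant in $\cM$. These are precisely the conditions under which the stable model structure $\SpN(\cM)$ of Appendix \ref{s:stab_model} is again left proper combinatorial simplicial, so that its underlying $\infty$-category $\SpN(\cM)_\infty$ is presentable and the comparison of Appendix \ref{s:stab_infinity} is available.

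Next I would invoke the comparison to produce a natural equivalence of $\infty$-categories
$$\SpN(\cM)_\infty \simeq \Sp(\cM_\infty)$$
under which the left-derived suspension spectrum functor $\mathbb{L}G_0 : \cM_\infty \to \SpN(\cM)_\infty$ is identified with $\Sigma^\infty : \cM_\infty \to \Sp(\cM_\infty)$. The identification of functors is not an extra verification but part of the comparison, as $G_0$ is defined by inserting an object at level zero and passing to its suspension spectrum, which matches the universal characterization of $\Sigma^\infty$. Substituting the equivalence $\cM_\infty = \Ind(\Csep^\op)_\infty \simeq \Ne\cS_*$ of Proposition \ref{p:noncomm_space}, the right-hand side becomes $\Sp(\Ne\cS_*)$ and the derived left adjoint becomes $\Sigma^\infty : \Ne\cS_* \to \Sp(\Ne\cS_*)$ as considered in \cite{MahNSH}, giving both claims.

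The genuine obstacle lies not in this application but in the underlying comparison itself, namely establishing that Hovey's category of spectrum objects presents the $\infty$-categorical stabilization and that $G_0$ presents $\Sigma^\infty$. The hard part there is to see that the $\Omega$-spectrum objects of $\SpN(\cM)$ model the limit tower $\cdots \xrightarrow{\Omega} \cM_\infty \xrightarrow{\Omega} \cM_\infty$ computing $\Sp(\cM_\infty)$; left properness and the cofibrancy of the generating-cofibration domains are exactly what make the levelwise and stable model structures well enough behaved for this to go through. Granting the appendix's treatment of this general statement, the proposition follows immediately by specialization.
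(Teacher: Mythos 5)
Your proposal is correct and follows essentially the same route as the paper's own proof: verify that $\Ind(\Csep^\op)$ satisfies the hypotheses of the stabilization comparison (Remark \ref{r:proC_main_dual} plus cofibrancy of objects of $\Csep^\op$), invoke the equivalence $\SpN(\cM)_\infty\simeq\Sp(\cM_\infty)$ together with the identification $\mathbb{L}G_0\simeq\Sigma^\infty_{\cM_\infty}$ from Appendix \ref{s:stab_infinity}, and then substitute the equivalence $\Ind(\Csep^\op)_\infty\simeq\Ne\cS_*$ of Proposition \ref{p:noncomm_space}. The paper likewise treats the hard comparison (via the cited results of Robalo and Mazel-Gee) as established in the appendix, so nothing is missing from your argument.
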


\begin{proof}
As explained in Appendix \ref{s:stab_infinity}, we have a natural equivalence of $\infty$-categories
$$\SpN(\Ind(\Csep^\op))_\infty\simeq \Sp(\Ind(\Csep^\op)_\infty),$$
and
$$\mathbb{L}G_0:\Ind(\Csep^\op)_\infty\to \SpN(\Ind(\Csep^\op))_\infty$$
is equivalent to
$$\Sigma^{\infty}_{\Ind(\Csep^\op)_\infty}:\Ind(\Csep^\op)_\infty\to \Sp(\Ind(\Csep^\op)_\infty)$$
under this natural equivalence. Combining this with Proposition \ref{p:noncomm_space} we obtain the desired result.
\end{proof}

\subsection{Relation of the category $\Pro(\Csep)$ to pro-$C^*$-algebras}\label{s:connect}
In this section we connect the underlying category of our model structure,  $\Pro(\Csep)$, with the more classical notion of pro-$C^*$-algebras, namely, topological $*$-algebras that are cofiltered limits of (separable) $C^*$-algebras (see \cite{Phi},\cite{Phi2}).

Let $\TPro(\Csep)$ denote the full subcategory of the category of topological $*$-algebras, spanned by those objects which are cofiltered limits of objects in $\Csep$ (the limit is taken in the category of topological $*$-algebras).

Let $A$ be an object in $\TPro(\Csep)$. Let $S(A)$ denote the set of all continuous $C^*$-seminorms on $A$. We regard $S(A)$ as a directed set in the obvious way. Namely, given $p,q\in S(A)$ we say $q \ge p$ if $q(a) \ge p(a)$ holds for all $a\in A$.

We define $\cL(A)\in \Pro(\Cgen)$ to be the diagram $\cL(A):S(A) \to \Cgen$ which sends $p\in S(A)$ to the $C^*$-algebra given by $A/\ker p$, with the norm inherited from $p$ (the fact that this is indeed a $C^*$-algebra is shown in \cite[Corollary 1.12]{Phi2}).
\begin{lem}\label{l:separable}
For every $p\in S(A)$ the $C^*$-algebra $\cL(A)_p$ is separable.
\end{lem}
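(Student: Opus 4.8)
The plan is to use the presentation of $A$ as a cofiltered limit of separable $C^*$-algebras to dominate a given continuous $C^*$-seminorm $p$ by one of the canonical coordinate seminorms, and then to exhibit $\cL(A)_p$ as a continuous surjective image of a separable normed space.

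First I would write $A=\lim_{i\in I}A_i$ with each $A_i\in\Csep$; using Lemmas \ref{l:cofinal_CDS} and \ref{l:cofinal} I may assume that $I$ is a cofinite directed set, since passing to a cofinal subdiagram does not change the limit. Denoting by $\pi_i\colon A\to A_i$ the projections, I set $p_i(a):=\|\pi_i(a)\|_{A_i}$. Since the underlying topological space of $A$ is the limit of the underlying spaces, that is, the subspace of $\prod_i A_i$ consisting of compatible families, the topology of $A$ is the initial topology with respect to the $\pi_i$, and hence is generated by the directed family of continuous $C^*$-seminorms $\{p_i\}_{i\in I}$. Directedness holds because whenever there is a morphism $j\to i$ in $I$ the connecting map $A_j\to A_i$ is a $*$-homomorphism, hence norm non-increasing, so that $p_i\le p_j$.

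Next, for an arbitrary $p\in S(A)$, I would use continuity of $p$ at $0$ to produce a finite set $F\subseteq I$ and a $\delta>0$ with $p(a)<1$ whenever $p_i(a)<\delta$ for all $i\in F$; the usual homogeneity argument for seminorms then upgrades this to $p\le\delta^{-1}\max_{i\in F}p_i$. Choosing $j\in I$ that admits morphisms to every element of $F$ and invoking $p_i\le p_j$, I obtain a single index $j$ and a constant $C>0$ with $p\le C\,p_j$, and in particular $\ker p_j\subseteq\ker p$. This inclusion induces a surjection $A/\ker p_j\to A/\ker p=\cL(A)_p$, which the estimate $p\le C\,p_j$ shows to be Lipschitz, hence continuous. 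Finally, $A/\ker p_j$ is isometrically isomorphic to $\pi_j(A)\subseteq A_j$; as $A_j$ is separable and separability passes to subspaces of metric spaces, $A/\ker p_j$ is separable, and a continuous image of a separable space is separable, so $\cL(A)_p$ is separable.

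The homogeneity argument and the permanence of separability under subspaces and continuous images are routine. The step I expect to require the most care --- the crux of the argument --- is the identification of the topology of $A$ with the one generated by the coordinate seminorms $p_i$, which is exactly what forces a general continuous $C^*$-seminorm to be dominated by a \emph{single} $p_j$. This is where the hypothesis that $A$ is a cofiltered limit of $C^*$-algebras in the category of topological $*$-algebras, together with the directedness of the index set, enters essentially.
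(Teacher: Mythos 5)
Your proof is correct and follows essentially the same route as the paper's: pull back the coordinate norms to seminorms $p_i$, use that they generate the topology of $A$ together with cofilteredness to dominate $p$ by a single $p_j$, and conclude via the induced surjection from $A/\ker p_j\cong\pi_j(A)\subseteq A_j$. The only differences are cosmetic --- you spell out the homogeneity and initial-topology arguments that the paper delegates to Phillips, and your preliminary reduction to a cofinite directed indexing set via Lemmas \ref{l:cofinal_CDS} and \ref{l:cofinal} is unnecessary, since cofilteredness of $I$ already yields an index $j$ mapping to every element of the finite set $F$.
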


\begin{proof}
We adapt arguments of Phillips from \cite[on page 131]{Phi}. Since $A$ is in $\TPro(\Csep)$ we can assume $A = \lim_{i \in I} X_{i}$ for some cofiltered system $X:I \to \Csep$. For each $i\in I$ let $p_i\in S(A)$ denote the pullback of the $C^*$-norm of $X_i$ along the natural map $A \to X_i$. Now pick any $p \in S(A)$. Since the $C^*$-seminorms $p_i$ determine the topology of $A$ there is a constant $c>0$ such that $p(a) \le c \max\{p_{i_1}(a),...,p_{i_r}(a)\}$ for all $a\in A$. The indexing set $I$ is cofiltered, therefore there is an index $i$ such that $p_i\ge p_{i_j}$ for all $1 \le j \le r$. Hence we have a quotient map $\cL(A)_{p_i} \to \cL(A)_{p}$. But $\cL(A)_{p_i}$ is isomorphic to a sub $C^*$-algebra of $X_i$, namely, the image of the natural map $A\to X_i$. Since $X_i$ is separable, we conclude that $\cL(A)_p$ is also separable.
\end{proof}

Thus, we have defined an object $\cL(A)\in\Pro(\Csep)$. In fact, it is not hard to see that we actually obtain a functor
$$\cL:\TPro(\Csep)\to\Pro(\Csep).$$

We now define a functor in the other direction:
\begin{defn}
Consider the inclusion $i:\Csep\hookrightarrow \TPro(\Csep)$. Since the category $\TPro(\Csep)$ has cofiltered limits, it follows from the universal property of the $\Pro$ construction that $i$ can be extended naturally to a functor $\lim:\Pro(\Csep)\to \TPro(\Csep)$ that commutes with cofiltered limits. We call this functor $\lim$ since it is indeed given by taking the limit in $\TPro(\Csep)$ of the input diagram.
\end{defn}

\begin{prop}\label{p:pro adjoint}
The above defined functors form an adjoint pair:
$$\cL:\TPro(\Csep)\rightleftarrows \Pro(\Csep):\lim,$$
where the unit $\id\to \lim\circ\cL$ is a natural isomorphism.
\end{prop}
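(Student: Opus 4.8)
The plan is to prove the adjunction by directly comparing the two hom-sets and then to identify the unit with the canonical comparison map $A\to\lim_{p}A/\ker p$, whose invertibility is the structure theorem for pro-$C^*$-algebras. Fix $A\in\TPro(\Csep)$ and $X=\{X_t\}_{t\in T}\in\Pro(\Csep)$. On the one hand, by the very definition of morphisms in a pro-category,
\[
\Hom_{\Pro(\Csep)}(\cL(A),X)=\lim_{t\in T}\ \colim_{p\in S(A)}\Hom_{\Csep}(A/\ker p,\,X_t).
\]
On the other hand, since every $X_t$ lies in $\TPro(\Csep)$, and $\lim X=\lim_t X_t$ is computed in topological $*$-algebras (where $\TPro(\Csep)$ is a full subcategory closed under such limits), the universal property of the limit gives
\[
\Hom_{\TPro(\Csep)}(A,\lim X)\cong\lim_{t\in T}\Hom_{\TPro(\Csep)}(A,X_t).
\]
Thus the whole statement reduces to a natural identification of the inner terms $\colim_{p}\Hom_{\Csep}(A/\ker p,X_t)\cong\Hom_{\TPro(\Csep)}(A,X_t)$ for each separable $C^*$-algebra $X_t$.

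The key step is therefore the following factorization lemma: for $A\in\TPro(\Csep)$ and any $B\in\Csep$ there is a natural bijection
\[
\colim_{p\in S(A)}\Hom_{\Csep}(A/\ker p,B)\xrightarrow{\ \cong\ }\Hom_{\TPro(\Csep)}(A,B).
\]
First I would send a class represented by $\psi\colon A/\ker p\to B$ to the composite $\psi\circ\pi_p$ with the quotient map $\pi_p\colon A\to A/\ker p$; this is well defined on the filtered colimit because the structure maps of $\cL(A)$ are precisely the quotients $A/\ker p\to A/\ker q$ for $p\geq q$, and precomposing with them is compatible with $\pi_p$. For the inverse, given a continuous $*$-homomorphism $\phi\colon A\to B$, the function $p_\phi(a):=\|\phi(a)\|_B$ is a continuous $C^*$-seminorm, so $p_\phi\in S(A)$; since $\ker p_\phi=\ker\phi$, the map $\phi$ descends to an \emph{isometric} $*$-homomorphism $\overline{\phi}\colon A/\ker p_\phi\to B$ of $C^*$-algebras, using that $A/\ker p_\phi$ is a $C^*$-algebra by \cite[Corollary 1.12]{Phi2}, and I send $\phi$ to the class of $\overline{\phi}$. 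Checking mutual inverseness is routine: $\overline{\phi}\circ\pi_{p_\phi}=\phi$ handles one composite, while for the other one observes that any $\psi$ as above has $p_{\psi\pi_p}\leq p$ (a $*$-homomorphism of $C^*$-algebras is contractive), so the morphism $p\to p_{\psi\pi_p}$ in $S(A)$ lets one pull $\overline{\psi\pi_p}$ back to a map $A/\ker p\to B$ which agrees with $\psi$ after precomposition with the dense-image map $\pi_p$, hence agrees with $\psi$ and defines the same colimit class.

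Assembling these identifications and applying $\lim_t$ yields the desired natural bijection $\Hom_{\Pro(\Csep)}(\cL(A),X)\cong\Hom_{\TPro(\Csep)}(A,\lim X)$; naturality in $A$ and in $X$ is immediate, since both sides are built functorially out of $\Hom$, $\lim$ and $\colim$. Under this adjunction the unit $\eta_A\colon A\to\lim\cL(A)$ corresponds to $\id_{\cL(A)}$, and unwinding the bijection shows that $\eta_A$ is exactly the canonical map $A\to\lim_{p\in S(A)}A/\ker p$ assembled from the quotient maps $\pi_p$. It remains to prove that this map is an isomorphism. For injectivity I would use that $A$, being a cofiltered limit of separable $C^*$-algebras, is Hausdorff and has its topology generated by $S(A)$: the seminorms $p_i$ pulled back from the $X_i$ already lie in $S(A)$ (cf. the proof of Lemma \ref{l:separable}) and every $p\in S(A)$ is continuous, so $\bigcap_{p\in S(A)}\ker p=0$. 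For the fact that the canonical map is moreover a topological isomorphism onto the inverse limit, I would invoke the structure theory of Phillips \cite{Phi,Phi2}: a complete topological $*$-algebra whose topology is determined by its continuous $C^*$-seminorms is canonically the inverse limit of the $C^*$-algebras $A/\ker p$.

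The main obstacle is the analytic input rather than the categorical bookkeeping: both the factorization lemma and the unit isomorphism rely on the facts that each $A/\ker p$ is already complete (a genuine $C^*$-algebra) and that $A$ is complete with topology generated by $S(A)$, which I draw from Phillips. The delicate point is that arbitrary $p\in S(A)$ are dominated by the defining seminorms $p_i$ only up to a multiplicative constant, so the $p_i$ are \emph{not} cofinal in the poset $S(A)$; this is precisely why the unit isomorphism must be argued through completeness and the generated topology rather than through a naive cofinality reduction of the type in Lemma \ref{l:cofinal}.
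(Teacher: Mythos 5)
Your proposal is correct and follows essentially the same route as the paper's proof: you reduce, exactly as the paper does, to the case of maps into a simple object $B\in\Csep$, i.e.\ to the bijection $\colim_{p\in S(A)}\Hom_{\Csep}(A/\ker p,B)\cong\Hom_{\TPro(\Csep)}(A,B)$ (which the paper simply cites as \cite[Lemma 1.1.5]{Phi} and you prove directly), and you then identify the unit with the canonical map $A\to\lim_{p\in S(A)}A/\ker p$ and invoke, just as the paper does, the Phillips--Schm\"udgen structure theorem \cite[Proposition 1.2]{Phi2} for its invertibility. One remark: your closing claim that the pulled-back seminorms $p_i$ are \emph{not} cofinal in $S(A)$ is mistaken; since each $\cL(A)_q$ is a genuine $C^*$-algebra, the map $\cL(A)_{p_i}\to\cL(A)_p$ induced by an estimate $p\le c\,p_i$ is a $*$-homomorphism of $C^*$-algebras and hence automatically contractive, which forces $p\le p_i$, so the $p_i$ do form a cofinal directed subfamily of $S(A)$ (the same mechanism that underlies the proof of Lemma~\ref{l:separable}). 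Your actual argument never uses this claim, so it does not affect the correctness of your proof.
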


\begin{proof}
Let $A$ be an object in $\TPro(\Csep)$ and let $\{X_i\}_{i\in I}$ be an object in $\Pro(\Csep)$. We need to show that there is a natural bijection
$$\Hom_{\TPro(\Csep)}(A,\lim_{i\in I}X_i)\cong \Hom_{\Pro(\Csep)}(\cL(A),\{X_i\}_{i\in I}).$$
We thus need to show that
$$\lim_{i\in I}\Hom_{\TPro(\Csep)}(A,X_i)\cong \lim_{i\in I}\Hom_{\Pro(\Csep)}(\cL(A),X_i).$$
It follows that it is enough to show that for any object $X$ in $\Csep$ we have
$$\Hom_{\TPro(\Csep)}(A,X)\cong \Hom_{\Pro(\Csep)}(\cL(A),X)\cong \colim_{p\in S(A)}\Hom_{\Csep}(\cL(A)_p,X),$$
but this follows from \cite[Lemma 1.1.5]{Phi}.
Now the unit $\id\to \lim\circ\cL$ of this adjunction is a natural $*$-isomorphism by \cite[Proposition 1.2]{Phi2} (citing \cite{Schm}).
\end{proof}

\begin{cor}\label{c:onto}
The left adjoint $\cL:\TPro(\Csep)\to \Pro(\Csep)$ is fully faithful. It follows that the functor
$$\cR:=\cL\circ\lim:\Pro(\Csep)\to\Pro(\Csep)$$
is a colocalization functor (a coreflector). Let us denote by $\cL\TPro(\Csep)$ the essential image of $\TPro(\Csep)$ under $\cL$. Then $\cL\TPro(\Csep)$, which is equivalent to $\TPro(\Csep)$, is a coreflective full subcategory of $\Pro(\Csep)$ and we have an adjoint pair
$$i:\cL\TPro(\Csep)\rightleftarrows \Pro(\Csep):\cR,$$
where $i$ is the inclusion.
\end{cor}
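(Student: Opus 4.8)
The plan is to derive everything from the adjunction established in Proposition \ref{p:pro adjoint}, using the standard categorical fact that a left adjoint is fully faithful precisely when the unit of the adjunction is a natural isomorphism. First I would observe that Proposition \ref{p:pro adjoint} gives exactly this: the unit $\id \to \lim \circ \cL$ is a natural isomorphism. By the classical criterion (see e.g. Mac Lane), this immediately implies that the left adjoint $\cL:\TPro(\Csep)\to\Pro(\Csep)$ is fully faithful. Dually, fully faithfulness of a left adjoint is equivalent to saying that the composite $\cR := \cL\circ\lim$, viewed as an endofunctor of $\Pro(\Csep)$, is idempotent in the appropriate sense, and that the counit $\cL\circ\lim \to \id$ exhibits it as a coreflector.

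Next I would spell out why $\cR = \cL\circ\lim$ is a colocalization (coreflection) functor. The general principle is: whenever $F\dashv G$ is an adjunction with $F$ fully faithful (equivalently, unit an isomorphism), the composite $FG$ is a coreflection onto the essential image of $F$. Concretely, the counit $\varepsilon: \cL\circ\lim \to \id$ of the adjunction $\cL\dashv\lim$ furnishes the coreflection: for any object $Y\in\Pro(\Csep)$, the map $\cR(Y) = \cL(\lim Y) \to Y$ is universal among maps into $Y$ from objects in the image of $\cL$. I would verify the idempotency $\cR\circ\cR\cong\cR$ by applying $\lim$ to the unit isomorphism: since $\lim\circ\cL\cong\id$, we get $\cR\cR = \cL\lim\cL\lim \cong \cL\lim = \cR$, with the middle isomorphism coming from $\lim\circ\cL\cong\id$ applied inside.

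Finally I would identify the essential image. Define $\cL\TPro(\Csep)$ to be the essential image of $\TPro(\Csep)$ under $\cL$. Since $\cL$ is fully faithful, it restricts to an equivalence $\TPro(\Csep)\xrightarrow{\sim}\cL\TPro(\Csep)$, so $\cL\TPro(\Csep)$ is equivalent to $\TPro(\Csep)$ as claimed. That this is a \emph{coreflective} full subcategory is precisely the statement that the inclusion $i:\cL\TPro(\Csep)\hookrightarrow\Pro(\Csep)$ admits a right adjoint; that right adjoint is $\cR$ (with values suitably corestricted to the image), and the adjunction $i\dashv\cR$ is obtained by transporting $\cL\dashv\lim$ across the equivalence $\TPro(\Csep)\simeq\cL\TPro(\Csep)$. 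The unit of $i\dashv\cR$ is an isomorphism because $i$ is fully faithful, which is consistent with $\cL\TPro(\Csep)$ being a reflective-dual (coreflective) subcategory.

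I do not anticipate a genuine obstacle here: the entire corollary is formal consequence-chasing from Proposition \ref{p:pro adjoint}, and the only point requiring minor care is checking that the counit $\cL\circ\lim\to\id$, rather than the unit, is what witnesses the coreflection, so that the direction of the universal arrow is correct (maps \emph{into} $Y$ from the subcategory, giving a \emph{right} adjoint to the inclusion and hence \emph{co}reflectivity, not reflectivity). The mild subtlety is purely bookkeeping of variances, which the contravariant flavour of the pro-construction makes worth stating explicitly.
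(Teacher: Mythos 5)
Your proposal is correct and coincides with the paper's own treatment: the paper gives no separate argument for Corollary \ref{c:onto}, treating it exactly as you do — a formal consequence of Proposition \ref{p:pro adjoint} via the standard facts that an invertible unit makes the left adjoint $\cL$ fully faithful and that the essential image of a fully faithful left adjoint is a coreflective subcategory with coreflector $\cR=\cL\circ\lim$. Your bookkeeping of the variances (counit witnessing the coreflection, $i$ admitting a \emph{right} adjoint) is accurate and matches the statement as given.
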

\begin{rem}
It might be possible to transfer our model structure on $\Pro(\Csep)$ through the adjunction
$$\cL:\TPro(\Csep)\rightleftarrows \Pro(\Csep):\lim,$$
and obtain a model structure also on $\TPro(\Csep)$.
\end{rem}


\section{Triangulated homology theories on $\Ind(\Csep^\op)$}\label{s:homology}
\subsection{Definition of triangulated homology theories}\label{ss:define homology}
In this subsection we define the notion of triangulated homology theories on a pointed cocomplete $\infty$-category.

Let $\cC$ be a pointed finitely cocomplete $\infty$-category. A diagram
$$X_0\map X_1\map X_2\map \cdots$$
in $\cC$ is called a {\em cofiber sequence} if each $X_{i+2}$ is the cofiber of the previous map $X_i\to X_{i+1}$. Thus, a cofiber sequence is completely determined, up to equivalence, by the first map $X_0\map X_1$. Note that if $\cC= \cM_\infty$, where $\cM$ is a pointed \emph{model category}, a cofiber sequence can be calculated using homotopy colimits, that is, by turning each map into a cofibration and then taking the cofiber in the underlying pointed category of $\cM$.

Let $A\map B\map C\map D$ be a cofiber sequence in $\cC$. Then we have the following diagram of pushout squares:
$$\xymatrix{A\ar[r]\ar[d] & B\ar[r]\ar[d] & \ast\ar[d]\\
              \ast\ar[r] &  C \ar[r]  & D.}$$
It follows that $D\simeq\Sigma A$ (see Appendix \ref{s:stab_infinity}). Thus, every cofiber sequence in $\cC$ has the form
$$A\to B\to C\to \Sigma A\to \Sigma B\to \Sigma C\to \Sigma^2 A\to \cdots.$$

The following definition is motivated by \cite{Quillen,HovBook,SchTop3,Tho}:

\begin{defn}\label{d:homology theory}
Let $\cC$ be a pointed cocomplete $\infty$-category.
\begin{enumerate}
\item Let $\mathcal{T}$ be a triangulated category with coproducts. A {\em triangulated homology theory} on $\cC$, with values in $\mathcal{T}$, is a pointed functor $\H:\Ho\cC\map\mathcal{T}$ such that:
 \begin{itemize}
 \item For any cofiber sequence in $\cC$ of the form $A\to B\to C\to \Sigma A$ the diagram $\H(A) \to \H(B) \to \H(C) \to \H(\Sigma A)$ is a distinguished triangle in $\mathcal{T}$.
 \item $\H$ preserves coproducts.
 \end{itemize}

\item
A cohomology theory on $\cC$ is a $\ZZ$ indexed sequence of pointed functors $\H^n:\Ho\cC^{\op}\to \Ab$ together with natural isomorphisms $\H^n\cong \H^{n+1}\circ \Sigma$ such that:
 \begin{itemize}
 \item For any cofiber sequence in $\cC$ of the form $A\to B\to C\to \Sigma A$ the diagram $\H^n(C) \to \H^n(B) \to \H^n(A) $
      is exact.
 \item $\H^n$ preserves products.
 \end{itemize}
\end{enumerate}
If $\cM$ is a pointed model category, then we define a triangulated homology theory or a cohomology theory on $\cM$ to be a triangulated homology theory or a cohomology theory on $\cM_\infty$. (It is shown in \cite{BarHarHor} that $\cM_\infty$ is indeed a pointed cocomplete $\infty$-category.)
\end{defn}

Note that any triangulated homology theory $\H:\Ho\cC\map\mathcal{T}$ and any object $S$ in $\mathcal{T}$ give rise to a cohomology theory $$\H^n:=\Hom_{\mathcal{T}}(\Sigma^{-n}\circ \H(-),S):\Ho\cC^{\op}\to \mathtt{Ab},$$ 
where $\Sigma$ here denotes the suspension functor in $\cT$ (see \cite{Nee} Example 1.1.13).

\begin{rem}\
\begin{enumerate}
\item Note that if $\cC$ is a cocomplete $\infty$-category then $\Ho\cC$ admits arbitrary coproducts, and they can be calculated as coproducts in $\cC$.
\item The original definition of a triangulated homology theory on $\Csep$ appeared in \cite{Tho}. After a suitable reversal of arrows our definition, when applied to $(\Ind(\Csep^\op))_\infty$, is a little more general than the original one when restricted to $\Csep$. It is similar in spirit to {\em cofiber homology theories} in \cite{SchTop3}.
\end{enumerate}
\end{rem}

Recall from Appendix \ref{s:stab_infinity} that a pointed $\infty$-category with finite colimits is called stable if the suspension functor $\Sigma:\cC\to \cC$ is an equivalence. If $\cC$ is a stable $\infty$-category then it is shown in \cite{Lur2} that $\Ho\cC$ is naturally a triangulated category; the suspension functor in $\Ho\cC$ is the one induced by $\Sigma:\cC\to \cC$, and the distinguished triangles are given by the cofiber sequences in $\cC$, after projection to $\Ho\cC$.

Let $\cC$ be a stable $\infty$-category with small colimits. Note that a cohomology theory $\H^n$ on $\cC$ is entirely determined by $\H^0$ since we have natural isomorphisms $\H^n\cong \H^0(\Sigma^{-n}(-))$.
Thus we see that if $\mathcal{T}$ is a triangulated category with coproducts then a triangulated homology theory on $\cC$, with values in $\mathcal{T}$, is just a triangulated coproduct preserving functor $\H:\Ho\cC\map\mathcal{T}$, and a cohomology theory on $\cC$ is just a decent cohomological functor $\H^0:\Ho\cC^{\op}\to \mathtt{Ab}$ (we refer to \cite{Nee} for the terminology concerning triangulated categories).
The following lemma is straightforward:
\begin{lem}\label{l:homology_theory}
Let $\cC$ and $\cD$ be pointed cocomplete $\infty$-categories and suppose that $\cD$ is stable. Let $F:\cC\to \cD$ be a colimit preserving functor. Then $\Ho\cD$ is naturally a triangulated category with coproducts and
$$\Ho F:\Ho\cC\to\Ho\cD$$
is a triangulated homology theory on $\cC$.
\end{lem}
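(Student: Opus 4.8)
The plan is to verify the two defining properties of a triangulated homology theory from Definition \ref{d:homology theory} directly, using the fact that $F$ preserves colimits. First I would observe that since $\cD$ is stable, $\Ho\cD$ is naturally a triangulated category (as recalled just before the lemma statement, following \cite{Lur2}), whose distinguished triangles are exactly the images in $\Ho\cD$ of cofiber sequences in $\cD$, and whose suspension is induced by $\Sigma_{\cD}$. Moreover, since $\cD$ is cocomplete, $\Ho\cD$ admits arbitrary coproducts computed as coproducts in $\cD$. This settles the ambient structure on the target: $\Ho\cD$ is a triangulated category with coproducts, so it is a legitimate recipient for a triangulated homology theory.

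Next I would check the cofiber-sequence condition. Let $A\to B\to C\to \Sigma_{\cC} A$ be a cofiber sequence in $\cC$. By definition this is determined by the first map $A\to B$, and is built from iterated pushouts against the zero object, exactly as in the pushout diagram displayed before the definition. Because $F$ preserves colimits, it sends pushout squares to pushout squares and preserves the zero object (a colimit-preserving functor preserves initial objects, and in a pointed setting the zero object is initial-terminal; here the relevant point is preservation of the pushouts defining cofibers). Hence $F$ carries the cofiber sequence in $\cC$ to a cofiber sequence
$$F(A)\to F(B)\to F(C)\to F(\Sigma_{\cC} A)$$
in $\cD$, and the canonical identification $\Sigma_{\cD} F(A)\simeq F(\Sigma_{\cC} A)$ again follows from $F$ preserving the defining pushouts. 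Projecting to $\Ho\cD$, the image of this cofiber sequence is precisely a distinguished triangle in $\Ho\cD$ by the description of the triangulated structure. Thus $\Ho F$ sends cofiber sequences to distinguished triangles.

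It remains to verify that $\H:=\Ho F$ preserves coproducts. A coproduct in $\Ho\cC$ is computed as a coproduct in $\cC$ (by the first part of the Remark following Definition \ref{d:homology theory}, valid since $\cC$ is cocomplete), and likewise in $\Ho\cD$. Since $F$ preserves all small colimits, in particular coproducts, and since passage to homotopy categories is compatible with these coproduct computations, $\Ho F$ preserves coproducts. Together with the previous paragraph, this shows $\Ho F$ is a triangulated homology theory on $\cC$ with values in $\Ho\cD$, completing the proof.

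I expect no serious obstacle here, consistent with the statement being labelled \textbf{straightforward}; the only point requiring mild care is the bookkeeping that a colimit-preserving $\infty$-functor genuinely sends cofiber sequences to cofiber sequences and is compatible with the suspension, i.e. that the equivalence $F\circ\Sigma_{\cC}\simeq\Sigma_{\cD}\circ F$ arises canonically from preservation of the relevant pushouts rather than being imposed by hand. Once this naturality is recorded, both axioms reduce to the single hypothesis that $F$ preserves finite and small colimits.
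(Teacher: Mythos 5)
Your proof is correct, and it supplies exactly the argument the paper leaves implicit: the paper states this lemma with no proof at all, simply calling it straightforward. Your verification --- a colimit-preserving functor between pointed $\infty$-categories preserves zero objects and pushouts, hence cofibers and suspensions, so that $\Ho F$ carries cofiber sequences to distinguished triangles (with the canonical identification $F(\Sigma_{\cC}A)\simeq\Sigma_{\cD}F(A)$) and preserves coproducts since these are computed in $\cC$ and $\cD$ respectively --- is precisely the intended unwinding of Definition \ref{d:homology theory} together with the description of the triangulated structure on $\Ho\cD$ recalled just before the lemma.
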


Now suppose that $\cC$ is a pointed presentable $\infty$-category. Then we have a natural choice of a triangulated homology theory on $\cC$. Namely, as explained in Appendix \ref{s:stab_infinity}, we can construct a stable presentable $\infty$-category $\Sp(\cC)$, together with a left adjoint
$$\Sigma^\infty:\cC\to\Sp(\cC).$$ If $\cC$ were the $\infty$-category of pointed spaces, then $\Sp(\cC)$ would correspond to the stable $\infty$-category of spectra.
Then according to Lemma \ref{l:homology_theory}, $\Ho\Sp(\cC)$ is naturally a triangulated category with coproducts and
$$\Ho \Sigma^\infty:\Ho\cC\to\Ho\Sp(\cC)$$
is a triangulated homology theory on $\cC$.

\begin{rem}\label{r:Brown}
Since $\Ne\cS_*\simeq\Ind(\Csepinf^\op)$ is a compactly generated $\infty$-category it follows from \cite[Proposition 1.4.3.7]{Lur2} that $\Sp(\Ne\cS_*)$ is a stable compactly generated $\infty$-category. Thus, $\Ho\Sp(\Ne\cS_*)$ is a compactly generated triangulated category (see \cite[Proposition 1.4.4.1]{Lur2}). It follows from \cite[Theorem 8.3.3]{Nee} that any cohomology theory on $\Sp(\Ne\cS_*)$ is representable. That is, if $\H^0:\Ho\Sp(\Ne\cS_*)^{\op}\to \mathtt{Ab}$ is a cohomology theory on $\Sp(\Ne\cS_*)$ then there exists an object $T$ in $\Ho\Sp(\Ne\cS_*)$ such that $\H^0$ is naturally isomorphic to
$$\Ho\Sp(\Ne\cS_*)(-,T).$$
This is called Brown representability. We will define in the next subsection several cohomology theories on $\Sp(\Ne\cS_*)$, but we will not need this result since we will be able to give a rather explicit description of a representing object.
\end{rem}

\subsection{Triangulated homology theories on $\Ind(\Csep^\op)$}\label{ss:THT}
In this subsection we construct several triangulated homology theories on $\Ind(\Csep^\op)$ (see Remark \ref{r:proC_main_dual}). By taking the opposite category they become bivariant homology theories that are applicable to {\em all} projective systems of separable $C^*$-algebras. Using Proposition \ref{p:noncomm_space} we see that a triangulated homology theory on $\Ind(\Csep^\op)$ is equivalent to a triangulated homology theory on $\Ind(\Csep^\op)_\infty\simeq \Ne\cS_*$. We thus recall a construction defined by the third author in \cite{MahNSH}, which associates a triangulated homology theory on $\Ne\cS_*$ to any set of morphisms in $\Csep^\op$. Due to certain improvements we incorporate in it, and for the convenience of the reader, we bring a detailed account of this construction here (see Theorem \ref{p:homology_main}). We then transform this construction, which uses the language of $\infty$-categories, to the world of model categories (see Theorem \ref{p:homology_main_model}). We end by considering several examples of this general construction.

Note, that the simplicial model category $\Ind(\Csep^\op)$ is pointed, where the zero object in $\Ind(\Csep^\op)$ is just the zero $C^*$ algebra. Thus the $\infty$-category $(\Ind(\Csep^\op))_\infty$ is indeed a pointed cocomplete $\infty$-category. If $(K,x)$ in a pointed finite simplicial set and $A\in\Csep^\op$, then the smash product $K\wedge A\in \Csep^\op$ is just the separable $C^*$-algebra of \emph{pointed} continuous maps from $(|K|,x)$ to $(A,0)$.

We begin with a small introduction.
As was shown in \cite{MahNSH}, and also follows easily from the model structure constructed here, the $\infty$-category $\Csepinf^\op$ admits finite colimits. Thus, as explained in Appendix \ref{s:stab_infinity}, there is a natural equivalence of $\infty$-categories
$$\Sp(\Ne\cS_*)\simeq\Sp(\Ind(\Csepinf^\op))\simeq \Ind(\SW(\Csepinf^\op)).$$
Recall that the objects
of the stable $\infty$-category $\SW(\Csepinf^\op)$ are pairs $(A,n)$ where $A\in\Csepinf^\op$ and $n\in \mathbb{N}$, and the mapping spaces
are given by
$$\Map_{\SW(\Csepinf^\op)}((A,n),(B,m)) = \colim_k\Map_{\Csepinf^\op}(\Sigma^{k-n} A,\Sigma^{k-m} B).$$ If $\cC$ were the $\infty$-category of finite pointed spaces, then $\SW(\cC)$ would corresponed to the $\infty$-categorical analogue of the Spanier--Whitehead category of finite spectra. We denote the natural map from $\Csepinf^\op$ to $\SW(\Csepinf^\op)$ by $\Sigma^\infty$. Note that this is indeed the restriction of $\Sigma^\infty:\Ne\cS_*\to\Sp(\Ne\cS_*)$ to $\Csepinf^\op$.

Using the fact that $\pi_0$ commutes with filtered homotopy colimits of simplicial sets, we see that $\Ho \SW(\Csepinf^\op)$ is equivalent to the triangulated category denoted $\Ho\Csep[\Sigma^{-1}]^\op$ in \cite{MahNSH} (obtained from $\Ho\Csep^\op$ by inverting the endofunctor $\Sigma$). In particular we see that we have a natural fully faithful inclusion of triangulated categories
$$\Ho\Csep[\Sigma^{-1}]^\op\hookrightarrow\Ho\Sp(\Ne\cS_*).$$
It is not hard to see that for any set of morphisms $R$ in $\SW(\Csepinf^\op)$, the set of objects $\{\cone(g)\;|\;g\in R\}$ in $\SW(\Csepinf^\op)$ is the same as
$$\{Z\;|\;\text{$\exists$ triangle in $\Ho \SW(\Csepinf^\op)$ of the form $X\xrightarrow{g}Y\to Z\to \Sigma X$ with $g\in R$}\}.$$

We now invoke a construction used in \cite{MahNSH}.
\begin{prop}[Mahanta]\label{p:homology_main}
Let $S$ be a set of morphisms in $\Csep^\op$ and let $\cA_S$ denote the smallest stable $\infty$-subcategory of $\SW(\Csepinf^\op)$ containing the set of objects $\{\cone(\Sigma^{\infty}g)\;|\;g\in S\}$. We define $$\cH_S:=\Ind(\SW(\Csepinf^\op)/\cA_S),$$
where $\SW(\Csepinf^\op)/\cA_S$ is the cofiber of the inclusion $\cA_S\hookrightarrow\SW(\Csepinf^\op)$ in $\Catex$ (see Appendix \ref{s:stab_infinity}).
Then $\cH_S$ is a compactly generated stable $\infty$-category. Moreover, we have the following:
\begin{enumerate}
 \item There is a localization functor $L:\Sp(\Ne\cS_*)\to \cH_S,$
which after composing with $\Sigma^\infty$ $$\Sigma^\infty_S:=L\circ\Sigma^\infty:\Ne\cS_*\to \cH_S$$
induces a triangulated homology theory on $\Ne\cS_*$ $$\Ho\Sigma^\infty_S:\Ho\Ne\cS_*\to \Ho\cH_S.$$

\item There is a canonical fully faithful exact functor of triangulated categories
$$\Ho\Csep[\Sigma^{-1}]^\op/\langle\{\cone(\Sigma^{\infty}g)\;|\;g\in S\}\rangle\hookrightarrow\Ho\cH_S,$$
where the quotient above is Verdier localization.
\end{enumerate}
\end{prop}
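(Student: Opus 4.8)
The plan is to treat the three assertions in turn, leaning on the identification $\Sp(\Ne\cS_*)\simeq\Ind(\SW(\Csepinf^\op))$ recorded above, on Lemma \ref{l:homology_theory}, and on the standard behaviour of ind-completion applied to a Verdier sequence of small stable $\infty$-categories. That $\cH_S$ is compactly generated and stable is the easy part: $\SW(\Csepinf^\op)$ is a small stable $\infty$-category and $\cA_S$ is a full stable $\infty$-subcategory, so the cofiber $\SW(\Csepinf^\op)/\cA_S$ formed in $\Catex$ is again a small stable $\infty$-category, and for any such $\cE$ the ind-completion $\Ind(\cE)$ is compactly generated and stable, its compact objects being the idempotent completion of $\cE$ (\cite[Section 1.1.3]{Lur2}). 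Taking $\cE=\SW(\Csepinf^\op)/\cA_S$ gives the claim.

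For (1), the quotient functor $q\colon\SW(\Csepinf^\op)\to\SW(\Csepinf^\op)/\cA_S$ is exact, and applying $\Ind$ gives a colimit-preserving functor
$$L:=\Ind(q)\colon\Sp(\Ne\cS_*)\simeq\Ind(\SW(\Csepinf^\op))\longrightarrow\Ind(\SW(\Csepinf^\op)/\cA_S)=\cH_S.$$
Since $\Ind$ carries the Verdier sequence $\cA_S\to\SW(\Csepinf^\op)\to\SW(\Csepinf^\op)/\cA_S$ to a localization sequence of compactly generated stable $\infty$-categories $\Ind(\cA_S)\to\Sp(\Ne\cS_*)\to\cH_S$, the functor $L$ admits a fully faithful right adjoint with kernel $\Ind(\cA_S)$, hence is a localization. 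The composite $\Sigma^\infty_S=L\circ\Sigma^\infty$ is then colimit-preserving, being a composite of colimit-preserving functors; as $\Ne\cS_*$ is pointed and cocomplete and $\cH_S$ is stable and cocomplete, Lemma \ref{l:homology_theory} shows that $\Ho\Sigma^\infty_S\colon\Ho\Ne\cS_*\to\Ho\cH_S$ is a triangulated homology theory.

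For (2), the restriction of $L$ along the inclusion $\Sigma^\infty\colon\SW(\Csepinf^\op)\hookrightarrow\Sp(\Ne\cS_*)$ of compact generators is just $q$, so on homotopy categories we obtain an exact functor $\Ho q\colon\Ho\SW(\Csepinf^\op)\to\Ho(\SW(\Csepinf^\op)/\cA_S)$ annihilating each $\cone(\Sigma^\infty g)$. Under the identification $\Ho\SW(\Csepinf^\op)\simeq\Ho\Csep[\Sigma^{-1}]^\op$, and since an exact functor kills $\cA_S$ precisely when it kills the thick subcategory it generates (the zero objects of the target forming a thick subcategory), the universal property of Verdier localization factors $\Ho q$ canonically as
$$\Ho\Csep[\Sigma^{-1}]^\op/\langle\{\cone(\Sigma^\infty g)\;|\;g\in S\}\rangle\xrightarrow{\ \Phi\ }\Ho(\SW(\Csepinf^\op)/\cA_S).$$
The essential point is that $\Phi$ is fully faithful: $\Ho(\SW(\Csepinf^\op)/\cA_S)$ agrees, up to idempotent completion, with the triangulated Verdier quotient $\Ho\SW(\Csepinf^\op)/\Ho\cA_S$, which in turn coincides with $\Ho\Csep[\Sigma^{-1}]^\op/\langle\{\cone(\Sigma^\infty g)\}\rangle$ because a Verdier quotient is insensitive to replacing a subcategory by its thick closure, and $\langle\{\cone(\Sigma^\infty g)\}\rangle$ is exactly the thick closure of $\Ho\cA_S$; as the canonical functor of a triangulated category into its idempotent completion is fully faithful, so is $\Phi$. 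Finally $\Ho(\SW(\Csepinf^\op)/\cA_S)$ is the homotopy category of the compact objects of $\cH_S$ and so embeds fully faithfully into $\Ho\cH_S$; composing this embedding with $\Phi$ yields the desired canonical fully faithful exact functor.

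I expect the main obstacle to be the full faithfulness in (2): one must compare the homotopy category of the $\infty$-categorical Verdier quotient $\SW(\Csepinf^\op)/\cA_S$ with the triangulated Verdier quotient of the underlying homotopy categories, and carefully track the idempotent-completion discrepancy between $\cA_S$, the smallest stable $\infty$-subcategory on the cones, and the thick subcategory $\langle\{\cone(\Sigma^\infty g)\}\rangle$ it generates. The cleanest route is to invoke the standard comparison between $\infty$-categorical and triangulated Verdier quotients together with the insensitivity of Verdier localization to thick closure (\cite{Nee}); once these are in place the remaining steps reduce to the universal properties and adjunctions already assembled.
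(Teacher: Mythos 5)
Your proposal is correct and follows essentially the same route as the paper's proof: ind-completion of the Verdier sequence $\cA_S\to\SW(\Csepinf^\op)\to\SW(\Csepinf^\op)/\cA_S$ to produce the localization $L$, colimit-preservation plus Lemma \ref{l:homology_theory} for the triangulated homology theory, and the comparison of the homotopy category of the $\infty$-categorical quotient with the triangulated Verdier quotient, followed by the fully faithful embedding of $\Ho(\SW(\Csepinf^\op)/\cA_S)$ into $\Ho\cH_S$. The only cosmetic difference is in part (2): the paper invokes \cite[Proposition 5.14]{BGT} to get an exact equivalence $\Ho\SW(\Csepinf^\op)/\Ho\cA_S\simeq\Ho(\SW(\Csepinf^\op)/\cA_S)$ on the nose (the cofiber is taken in $\Catex$, with no idempotent completion built in) and identifies $\Ho\cA_S$ directly with the triangulated subcategory generated by the cones, so your idempotent-completion hedge and the passage through thick closures are unnecessary---though harmless, since full faithfulness is all that is claimed and your argument delivers it either way.
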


\begin{rem}
See \cite[Section 5.5.4]{Lur} for the general theory of localizations of $\infty$-categories.
\end{rem}

\begin{proof}
The ind-category $\Ind(\cA_S)$ is a stable $\infty$-subcategory of $\Ind(\SW(\Csepinf^\op))$. Since the inclusion $\cA_S\hookrightarrow \SW(\Csepinf^\op)$ preserves finite colimits, it follows that the inclusion $\Ind(\cA_S)\to\Ind(\SW(\Csepinf^\op))$ admits a right adjoint. Thus this inclusion is a morphism in $\Prlex$ (see Appendix \ref{s:stab_infinity}) and we define
$$\cH_S:=\Sp(\Ne\cS_*)/\Ind(\cA_S)$$
to be its cofiber in $\Prlex$ (see \cite[Definition 5.4]{BGT}). By \cite[Proposition 5.6]{BGT} we have that the natural functor
$$L:\Sp(\Ne\cS_*)\to\cH_S$$
is a localization. Furthermore, since $\Ind$ preserves cofibers, we have a natural equivalence
$$\cH_S=\Sp(\Ne\cS_*)/\Ind(\cA_S)\simeq \Ind(\SW(\Csepinf^\op))/\Ind(\cA_S)\simeq \Ind(\SW(\Csepinf^\op)/\cA_S).$$
Thus $\cH_S$ is compactly generated and, in particular, accessible.

By \cite[Proposition 5.14]{BGT}, we have a natural equivalence of categories
$$\Ho \SW(\Csepinf^\op)/\Ho\cA_S\simeq\Ho(\SW(\Csepinf^\op)/\cA_S),$$
where the first quotient is the Verdier localization of the triangulated category $\Ho \SW(\Csepinf^\op)$.
It is also easy to see that the triangulated subcategory $\Ho\cA_S$ of $\Ho \SW(\Csepinf^\op)$ is the smallest triangulated subcategory of $\Ho \SW(\Csepinf^\op)$ containing $\{\cone(\Sigma^{\infty}g)\;|\;g\in S\}$, or in other words
$$\Ho\cA_S=\langle\{\cone(\Sigma^{\infty}g)\;|\;g\in S\}\rangle.$$
Thus we obtain
$$\Ho\Csep[\Sigma^{-1}]^\op/\langle\{\cone(\Sigma^{\infty}g)\;|\;g\in S\}\rangle\simeq\Ho(\SW(\Csepinf^\op)/\cA_S),$$
and we see that we have a natural fully faithful inclusion of triangulated categories
$$\Ho\Csep[\Sigma^{-1}]^\op/\langle\{\cone(\Sigma^{\infty}g)\;|\;g\in S\}\rangle\hookrightarrow\Ho\cH_S.$$
\end{proof}

Let $S$ be a set of morphisms in $\Csep^\op$.
For $A,B\in\Ho\Ne\cS_*$, we define
$$(\mathbb{H}_S)_0(A,B) := \Ho \cH_S(\Ho\Sigma^\infty_S(A),\Ho\Sigma^\infty_S(B))\in \Ab.$$
We may extend the $\mathbb{H}_S$-theory to a graded theory as follows:
\beqn
(\mathbb{H}_S)_n(A,B):=\begin{cases}
                                  (\mathbb{H}_S)_0(A,\Sigma^{-n}B) & \text{ if $n< 0$,}\\
                                  (\mathbb{H}_S)_0(\Sigma^nA,B) & \text{ if $n\geq 0$.}
                                 \end{cases}
\eeqn

Since we have a specific model for the $\infty$-category $\Ne\cS_*$, namely $\Ind(\Csep^\op)$, we can also perform the localization described in Proposition \ref{p:homology_main} in the world of model categories. This gives specific models for the localized $\infty$-categories described in Proposition \ref{p:homology_main} and studied by the third author in \cite{MahNSH} and other papers. In particular we obtain models for the stable $\infty$-category of noncommutative spectra $\mathtt{NSp}$ (resp. $\mathtt{NSp'}$) that was constructed in \cite{MahNSH} (resp. \cite{MahColoc}).

\begin{prop}\label{p:homology_main_model}
Let $S$ be a set of morphisms in $\Csep^\op$.
Then there exists a small set of morphisms $T=T_S$ in $\Ho\SpN(\Ind(\Csep^\op))$ such that (in the notation of Proposition \ref{p:homology_main}) $\cH_S$ is modeled by the left Bousfield localization of $\SpN(\Ind(\Csep^\op))$ with respect to $T$, or in other words
$$\cH_S\simeq \mathrm{L}_T \SpN(\Ind(\Csep^\op))_\infty.$$
Furthermore, the left Quillen functor
$$\id:\SpN(\Ind(\Csep^\op))\to\mathrm{L}_T \SpN(\Ind(\Csep^\op))$$
gives rise to an $\infty$-functor
$$\mathbb{L}\id:\SpN(\Ind(\Csep^\op))_\infty\to\mathrm{L}_T \SpN(\Ind(\Csep^\op))_\infty$$
which is equivalent to $L$. The  model category
$\mathrm{L}_T \SpN(\Ind(\Csep^\op))$ is moreover stable simplicial left proper and  combinatorial.
\end{prop}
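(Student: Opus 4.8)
The plan is to exhibit the accessible localization $L$ of Proposition \ref{p:homology_main} as a left Bousfield localization at the level of model categories. By Proposition \ref{p:N_spectra} we may identify $\SpN(\Ind(\Csep^\op))_\infty\simeq\Sp(\Ne\cS_*)$, with $\mathbb{L}G_0$ corresponding to $\Sigma^\infty$ under this equivalence. Recall from the proof of Proposition \ref{p:homology_main} that $L\colon\Sp(\Ne\cS_*)\to\cH_S$ is the cofiber in $\Prlex$ of the inclusion of the localizing subcategory $\Ind(\cA_S)$, which is generated under colimits by the small set of objects $\{\cone(\Sigma^\infty g)\mid g\in S\}$. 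Since $\Map(-,X)$ carries colimits to limits, an object $X$ of $\Sp(\Ne\cS_*)$ is $L$-local if and only if $\Map(\cone(\Sigma^\infty g),X)\simeq\ast$ for every $g\in S$.

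First I would produce the set $T=T_S$. As $\SpN(\Ind(\Csep^\op))$ is combinatorial and simplicial and every object of $\Csep^\op$ is cofibrant (Remark \ref{r:proC_main_dual}), each object $\cone(\Sigma^\infty g)$ is represented by a cofibrant object of $\SpN(\Ind(\Csep^\op))$, namely a cofibrant model of the cofiber of $G_0(g)$. I set
$$T:=\{\,\cone(\Sigma^\infty g)\to 0 \mid g\in S\,\},$$
a small set of morphisms in $\Ho\SpN(\Ind(\Csep^\op))$. By the orthogonality characterization above, an object of $\SpN(\Ind(\Csep^\op))_\infty$ is $T$-local precisely when it is $L$-local.

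Next, since $\SpN(\Ind(\Csep^\op))$ is left proper and combinatorial, the existence theorem for left Bousfield localizations (cf. Appendix \ref{s:stab_model}) produces the model category $\mathrm{L}_T\SpN(\Ind(\Csep^\op))$; it has the same cofibrations, is again left proper, combinatorial and simplicial, and its fibrant objects are the $T$-local fibrant objects of $\SpN(\Ind(\Csep^\op))$. The comparison between model-categorical and $\infty$-categorical localization (cf. Appendix \ref{ss:infinity}) identifies its underlying $\infty$-category with the $\infty$-categorical localization of $\SpN(\Ind(\Csep^\op))_\infty$ at the image of $T$; since the $T$-local objects coincide with the $L$-local objects, this localization is exactly $\cH_S$, giving
$$\cH_S\simeq\mathrm{L}_T\SpN(\Ind(\Csep^\op))_\infty.$$
Because $\cH_S$ is stable by Proposition \ref{p:homology_main}, and a pointed combinatorial simplicial model category is stable exactly when its underlying $\infty$-category is, the localization $\mathrm{L}_T\SpN(\Ind(\Csep^\op))$ is stable. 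Finally, $\id\colon\SpN(\Ind(\Csep^\op))\to\mathrm{L}_T\SpN(\Ind(\Csep^\op))$ is left Quillen, and under the identifications above its derived functor $\mathbb{L}\id$ is the $\infty$-categorical localization functor at $T$; by uniqueness of the localization at a fixed set of morphisms, $\mathbb{L}\id\simeq L$.

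The hard part will be the comparison step: verifying that the underlying $\infty$-category of the model-categorical Bousfield localization $\mathrm{L}_T\SpN(\Ind(\Csep^\op))$ agrees with the $\infty$-categorical localization of $\SpN(\Ind(\Csep^\op))_\infty$ at $T$, compatibly with the localization functors. This rests on matching the $T$-local fibrant objects with the $L$-local objects through their derived mapping spaces, using that $\SpN(\Ind(\Csep^\op))$ is simplicial so that these mapping spaces are computed by the simplicial enrichment on fibrant--cofibrant objects; care is needed to ensure that the chosen representatives of $\cone(\Sigma^\infty g)$ and the identification of $\mathbb{L}G_0$ with $\Sigma^\infty$ are tracked consistently throughout.
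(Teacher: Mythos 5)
Your strategy (produce an explicit set $T$ and then compare model-categorical with $\infty$-categorical localization) is viable in principle, but the step on which everything rests is wrong: the identification of the $T$-local objects with the $L$-local objects. The kernel $\Ind(\cA_S)$ is \emph{not} generated under colimits by the objects $\cone(\Sigma^{\infty}g)$ alone, because $\cA_S$ is by definition a \emph{stable} subcategory of $\SW(\Csepinf^\op)$ and is therefore closed under desuspension, while desuspensions cannot be manufactured from the cones by colimits. Concretely, the condition $\Map(\cone(\Sigma^{\infty}g),X)\simeq \ast$ for all $g\in S$ only forces the mapping \emph{spectra} $\mathrm{map}(\cone(\Sigma^{\infty}g),X)$ to vanish in non-negative degrees, whereas $L$-locality (right orthogonality to all of $\Ind(\cA_S)$) requires them to vanish in all degrees, i.e. $\Map(\Sigma^{-n}\cone(\Sigma^{\infty}g),X)\simeq\ast$ for every $n\ge 0$ as well. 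The discrepancy is real already in ordinary spectra: $X=\Sigma^{-1}H\ZZ$ satisfies $\Map(S^0,X)\simeq\ast$ but $\Map(\Sigma^{-1}S^0,X)\simeq K(\ZZ,0)\not\simeq\ast$. So with your $T$ the class of $T$-local objects is in general strictly larger than the class of $L$-local objects, the claimed equivalence $\cH_S\simeq \mathrm{L}_T \SpN(\Ind(\Csep^\op))_\infty$ fails, and $\mathrm{L}_T \SpN(\Ind(\Csep^\op))$ need not even be stable (its local objects are closed under $\Omega$ but not under $\Sigma$). The repair is to take
$$T:=\{\,\Sigma^n\cone(\Sigma^{\infty}g)\to 0 \mid g\in S,\ n\in\ZZ\,\},$$
the negative $n$ being the essential addition; this is exactly the set indicated (without proof) in the remark following the proposition.

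Note also that the paper's own proof is structured quite differently and avoids both this computation and the comparison step you postponed as ``the hard part'': it never constructs $T$ explicitly, but observes that $L\colon\Sp(\Ne\cS_*)\to\cH_S$ is an \emph{accessible} localization (since $\cH_S$ is compactly generated, by Proposition \ref{p:homology_main}), and then invokes \cite[Proposition A.3.7.8]{Lur}, which says that every accessible localization of the underlying $\infty$-category of a left proper combinatorial simplicial model category is realized by a left Bousfield localization at some small set $T$ of morphisms, compatibly with the localization functors; this simultaneously yields the existence of $T$, the identification $\mathbb{L}\id\simeq L$, and (together with Proposition \ref{p:bousfield} and Theorem \ref{t:bousfield}) the listed properties of $\mathrm{L}_T\SpN(\Ind(\Csep^\op))$. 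If you insist on the explicit $T$, you must both correct it as above and actually carry out the model-to-$\infty$ comparison you deferred.
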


\begin{rem}
It can be shown that our desired set $T=T_S$ can be taken to be
$$T_S=\{\Sigma^n \mathbb{L}G_0(f)\;|\; f\in S, \:n\in\mathbb{Z}\},$$
but we will not need this result in this paper.
\end{rem}

\begin{proof}
Recall from Proposition \ref{p:N_spectra} that we have a natural equivalence of $\infty$-categories
$$\Sp(\Ne\cS_*)\simeq\SpN(\Ind(\Csep^\op))_\infty.$$
By Theorem \ref{t:spectra}, $\SpN(\Ind(\Csep^\op))$ is a left proper combinatorial simplicial model category. It follows from \cite[Proposition A.3.7.8]{Lur} that every accessible localization of $\Sp(\Ne\cS_*)$ can be manifested by a left Bousfield localization of the model category $\SpN(\Ind(\Csep^\op))$ with respect to a small set of morphisms. In particular, there exists a small set of morphisms $T=T_S$ in $\Ho\SpN(\Ind(\Csep^\op))$ such that
$$\cH_S\simeq \mathrm{L}_T \SpN(\Ind(\Csep^\op))_\infty.$$
Furthermore, the left Quillen functor
$$\id:\SpN(\Ind(\Csep^\op))\to\mathrm{L}_T \SpN(\Ind(\Csep^\op))$$
gives rise to an $\infty$-functor
$$\mathbb{L}\id:\SpN(\Ind(\Csep^\op))_\infty\to\mathrm{L}_T \SpN(\Ind(\Csep^\op))_\infty$$
which is equivalent to $L$.
It follows in particular, that an object $X\in \mathrm{L}_T \SpN(\Ind(\Csep^\op))$ is fibrant if and only if it is fibrant in $\SpN(\Ind(\Csep^\op))$ and the associated object in
$$\SpN(\Ind(\Csep^\op))_\infty\simeq\Sp(\Ne\cS_*)$$
belongs to the full subcategory $\cH_S$.
By Proposition \ref{p:bousfield} and Theorem \ref{t:bousfield}  we know that $\mathrm{L}_T \SpN(\Ind(\Csep^\op))$ is a simplicial left proper combinatorial model category.
\end{proof}

\subsection{Examples}\label{ss:homology examples}
In this subsection, we consider several applications of Propositions \ref{p:homology_main} and \ref{p:homology_main_model}, for different sets of morphisms $S$.

We begin with a general construction.
Let $\phi:B\map C$ be a morphism in $\Csep$. We denote by $\hfib(\phi)$ the pullback in $\Csep$:
$$\xymatrix{
\hfib(\phi)\ar[dd]\ar[rrd] & B \ar[dd]^\phi\ar[dr]^i & \\
 & & P(\phi),\ar[dl]^p\\
0\ar[r] & C. & }$$
where $P(\phi),i,p$ are as in the proof of Proposition \ref{p:SC_WFC}.
Note that there is an induced map
$$\theta(\phi):\fib(\phi)\to \hfib(\phi)$$
in $\Csep$ (where $\fib(\phi)$ denotes the fiber of $\phi$, that is, the kernel of $\phi$).

Consider the following set of morphisms in $\Csep^\op$:
$$S_1 := \{\theta(\phi)^\op\;|\; 0\map A\map B\overset{\phi}{\map} C\map 0 \text{ is a cpc-split extension in }\Csep\}.$$

Fix a minimal projection $p\in\cpt$, where $\cpt$ is the $C^*$ algebra of compact operators on a separable Hilbert space. For any $A\in\Csep$, there is an induced morphism $\iota_A:A\map A\otimes\cpt$ in $\Csep$, sending $a$ to $a\otimes p$. We define another set of morphisms in $\Csep^\op$
$$S_2:= \{\iota_A^\op\;|\; A\in\Csep\}.$$

We now apply Proposition \ref{p:homology_main_model} to the set $S:=S_1\cup S_2$.
We denote the left Bousfield localization of $\SpN(\Ind(\Csep^\op))$ with respect to $T=T_S$ (as in Proposition \ref{p:homology_main_model}) by
$$\mathbf{KK}^{\ind}:=\mathrm{L}_T \SpN(\Ind(\Csep^\op)).$$
Then
$\mathbf{KK}^{\ind}$ is a stable simplicial left proper combinatorial model category, and the $\infty$-category $\mathbf{KK}^{\ind}_\infty$ is compactly generated.
We also denote
$$\mathbf{KK}^{\pro}:=(\mathbf{KK}^{\ind})^{\op},$$
and
$$\KKI:=\Ho\mathbf{KK}^{\ind},\:\:\KKP:=\Ho\mathbf{KK}^{\pro}.$$
We have a composite left Quillen functor
$$\pi_\K:\Ind(\Csep^\op)\xrightarrow{G_0}\SpN(\Ind(\Csep^\op))\xrightarrow{\id}\mathbf{KK}^{\ind},$$
and its left derived functor
$$\LPK:\Ho\Ind(\Csep^\op)\xrightarrow{}\KKI,$$
is a triangulated homology theory on $\Ind(\Csep^\op).$

It is well known that
$$\Ho\Csep[\Sigma^{-1}]^\op/\langle\{\cone(g)\;|\;g\in S\}\rangle$$
is equivalent to the opposite of Kasparov's bivariant $\K$-theory category. (An analogous result for $\E$-theory is shown in \cite{Tho}.) Thus we get that the opposite of Kasparov's bivariant $\K$-theory category is equivalent to the triangulated subcategory generated by the image of the composite functor
$$\Ho\Csep^{\op}\map\Ho\Ind(\Csep^{\op})\xrightarrow{\LL\pi_\K}\KKI.$$

For $A,B\in\Ho\Ind(\Csep^\op)$ we define
$$\KK^\ind_0(A,B) :=(\mathbb{H}_S)_0(A,B)= \KKI(\LPK(A),\LPK(B))\in \Ab,$$
and for any $n\in\mathbb{Z}$ we define
$$
\KK^\ind_n(A,B):=(\mathbb{H}_S)_n(A,B)=\begin{cases}
                                  \KK^\ind_0(A,\Sigma^{-n}B) & \text{ if $n< 0$,}\\
                                  \KK^\ind_0(\Sigma^nA,B) & \text{ if $n\geq 0$.}
                                 \end{cases}
$$
If $A,B\in\Ho\Csep^{\op}$ there is a natural isomorphism
\begin{equation}\label{e:KK}
\KK^\ind_n(A,B)\cong\KK_n(B,A),
\end{equation}
where the right hand side denotes Kasparov's $\KK$-theory.

As noted after Definition \ref{d:homology theory}, if we pick any object $V$ in $\KKI$ we obtain a cohomology theory on $\Ind(\Csep^{\op})$ by $$\K^n_V=\KKI(\Sigma^{-n}\circ \LPK(-),V),$$
where $\Sigma$ here denotes the suspension functor in $\KKI$.
In particular, choosing $V=\LPK(\mathbb{C})$, we obtain a cohomology theory on $\Ind(\Csep^{\op})$ which we denote
$$\K^n=\KKI(\Sigma^{-n}\circ \LPK(-),\LPK(\mathbb{C}))=\begin{cases}
                                  \KKI(\Sigma^{-n}\circ \LPK(-),\LPK(\mathbb{C})) & \text{ if $n< 0$,}\\
                                  \KKI( \LPK(-),\Sigma^{n}\circ\LPK(\mathbb{C})) & \text{ if $n\geq 0$.}
                                 \end{cases}=$$
$$\begin{cases}
\KK^\ind_0(\Sigma^{-n}(-),\mathbb{C}) & \text{ if $n< 0$,}\\
\KK^\ind_0(-,\Sigma^n\mathbb{C}) & \text{ if $n\geq 0$.}
\end{cases}=
\KK^\ind_{-n}(-,\mathbb{C}).$$
For $n=0$ we obtain
$$\K^0=\KKI(\LPK(-),\LPK(\mathbb{C}))=\KK^\ind_{0}(-,\mathbb{C}).$$

There is also a corresponding cohomology theory on $\SpN(\Ind(\Csep^\op))$ given by
$$\K^0:=\KKI(\mathbb{L}\id(-),\LPK(\mathbb{C})).$$
(Note that since $\SpN(\Ind(\Csep^\op))$ is stable this determines $\K^n$ for all $n$.) As noted in Remark \ref{r:Brown}, by the general Brown-Neeman representability theorem, this cohomology theory is representable, that is, there exists an object $U$ in $\Ho\SpN(\Ind(\Csep^\op))$ and a natural isomorphism
$$\K^0\cong\Ho\SpN(\Ind(\Csep^\op))(-,U).$$
But as also noted there, we can actually give a rather explicit description of a representing object. We have isomorphisms, natural in $A\in\Ho\SpN(\Ind(\Csep^\op))$, using the functor $G_0(-)$ constructed at the end of Appendix \ref{s:stab_infinity}
$$\K^0(A)=\KKI(\mathbb{L}\id(A),\LPK(\mathbb{C}))\cong
\pi_0\Map_{\mathbf{KK}^\ind_{\infty}}(A^c,G_0(\mathbb{C}))\cong$$
$$\pi_0\Map_{\mathbf{KK}^\ind}(A^c,G_0(\mathbb{C})^f)\cong
\pi_0\Map_{\SpN(\Ind(\Csep^\op))}(A^c,G_0(\mathbb{C})^f)\cong$$
$$\pi_0\Map_{\SpN(\Ind(\Csep^\op))_\infty}(A,G_0(\mathbb{C})^f)\cong
\Ho\SpN(\Ind(\Csep^\op))(A,G_0(\mathbb{C})^f).$$
where $(-)^c$ denotes a functorial cofibrant replacement in $\SpN(\Ind(\Csep^\op))$ and $G_0(\mathbb{C})^f$ denotes a fibrant replacement of $G_0(\mathbb{C})$ in $\mathbf{KK}^\ind$. Thus we see that $G_0(\mathbb{C})^f$ is a representing object for the cohomology theory $\K^0$ on $\SpN(\Ind(\Csep^\op))$.

\begin{rem}
It is plausible that $\mathbf{KK}^\ind$ is a model for the stable $\infty$-category $\mathtt{KK_\infty}$ that was constructed by the third author in \cite{MahColoc}.
\end{rem}

\begin{rem}
One can use the $\KK^\ind$-theory to define a bivariant $\K$-theory for certain pro-$C^*$-algebras. Let $\cL: \TPro(\Csep)\map\Pro(\Csep)$ be the functor constructed in Section \ref{s:connect}. For two objects $A$ and $B$ in $\TPro(\Csep)$ we define, in analogy to Equation \ref{e:KK} above
$$\KK_*(B,A):= \KK_*^\ind(\cL(A),\cL(B)).$$
This $\KK$-theory will agree with the bivariant $\K$-theory for separable $\sC$-algebras \cite{Cun} that was denoted by $\skk$-theory in \cite{MahKK} (not to be confused with the diffotopy invariant bivariant $\K$-theory for locally convex algebras) on a reasonably large subcategory (cf. Theorem \ref{t:main Bonkat} below and Proposition 36 of \cite{MahKK}).
\end{rem}

\subsubsection{Other triangulated homology theories}
Repeating the procedure of the previous subsection with other sets $S$ of morphisms in $\Csep^\op$ we obtain other stable model categories, and induced triangulated homology theories on $\Ind(\Csep^\op)$, extending well known triangulated homology theories on $\Csep^\op$. We list a few examples.
\begin{enumerate}
\item If we define
$$S'_1 := \{\theta(\phi)^\op\;|\; 0\map A\map B\overset{\phi}{\map} C\map 0 \text{ is an extension in }\Csep\},$$
and take the set $S$ to be $S'_1\cup S_2$ we obtain an extension of Connes-Higson bivariant $\E$-theory category.
\item  If we take the set $S$ to be just $S'_1,$
we obtain an extension of the noncommutative stable homotopy category $\NSH$. The corresponding $\infty$-category is the stable $\infty$-category of noncommutative spectra constructed in \cite{MahNSH}.
\item Let $M_2(\mathbb{C})$ be the $C^*$ algebra of $2\times 2$ matrices over $\mathbb{C}$. For any $A\in\Csep$ there is an induced morphism $\chi_A:A\map A\otimes M_2(\mathbb{C})$ in $\Csep$, sending $a$ to $a\otimes e_{11}$. We define a set of morphisms in $\Csep^\op$ by
$$S'_2:= \{\chi_A^\op\;|\; A\in\Csep\}.$$
If we take the set $S$ to be $S_1\cup S'_2$ we obtain an extension of the \emph{connective} bivariant $\K$-theory category.
\item If we take the set $S$ to be $S'_1\cup S'_2$ we obtain an extension of the \emph{connective} bivariant $\E$-theory category that is modelling the (opposite of the) stable $\infty$-category $\mathtt{E^{cn}_\infty}$ of \cite[Section 3]{MahNSp}.

\end{enumerate}

\section{Comparison with Bonkat's bivariant $\K$-theory category}\label{s:Bonkat}
In the previous subsection we have constructed a bivariant $\K$-theory that is applicable to {\em all} projective systems of separable $C^*$-algebras.
In \cite{Bon}, Bonkat constructed a bivariant $\K$-theory that is applicable to projective systems of separable $C^*$-algebras that have surjective connecting homomorphisms and admit a countable cofinal subsystem.
In this subsection we will show that our $\K$-theory agrees with Bonkat's construction in certain cases, and admits better formal properties.
We first recall some facts about Bonkat's construction.

Let $\Pro_\Bon(\Csep)$ denote the full subcategory of $\Pro(\Csep)$ spanned by the objects $X:J\to \Csep$ that have surjective connecting homomorphisms and such that there exists a countable cofiltered category $K$ and a cofinal functor $K\to J$. In \cite{Bon} Bonkat constructed an additive category $\BKK$ and a pointed functor $\H:\Pro_\Bon(\Csep)\map\BKK$. Extending Higson's universal characterization of $\KK$-theory \cite{Hig} it is shown in \cite[Satz 3.5.10]{Bon} that the functor $\H:\Pro_\Bon(\Csep)\map\BKK$ is the universal additive category valued functor that has the following properties:

 \begin{enumerate}
  \item Homotopy invariance, i.e., the functor $\H$ is invariant under simplicial homotopy. Simplicial homotopy is the homotopy relation between maps generated by the standard path object given by the underlying simplicial structure. See Definition \ref{d:action}.
  \item $C^*$-stability, i.e., for any $\{A_j\}\in \Pro_\Bon(\Csep)$, and any minimal projection $p\in\cpt$, the induced morphism $\H(\{A_j\})\map \H(\{A_j\otimes\cpt\})$ is an isomorphism in $\BKK$.
  \item Split exactness, i.e., whenever \beqn \xymatrix{0\ar[r] & \{A_i\}\ar[r] & \{B_j\}\ar[r]^g & \{C_k\}\ar@/_1pc/[l]_s\ar[r] & 0}\eeqn is a split exact sequence in $\Pro_\Bon(\Csep)$, then $\H\{B_j\}\cong\H \{A_i\}\oplus \H\{C_k\}$ in $\BKK$.
 \end{enumerate}

\begin{rem}
In Bonkat's notation the category $\Pro_\Bon(\Csep)$ is denoted $\mathbf{S}\mathcal{C}_{\all}$, the category $\BKK$ is denoted $\mathbf{KK}_{\mathbf{S}\mathcal{C}_{\all}}$ and the functor $\H$ is denoted $KK_{\mathcal{C}_{\all}}$.
See the beginning of Section 2.3, Section 2.4 and Definition 3.5.1 in \cite{Bon}.
\end{rem}

We constructed above a triangulated homology theory on $\Ind(\Csep^\op)$,
$$\mathbb{L}\pi_\K:\Ho\Ind(\Csep^\op)\xrightarrow{}\KKI.$$
It will be more convenient for us now to work with the opposite functor
$$\LPK^{\op}:\Ho\Pro(\Csep)\rightarrow\KKP.$$
We denote the composition
$$\Pro(\Csep)\to\Ho\Pro(\Csep)\xrightarrow{\LPK^{\op}}\KKP$$
also by $\LPK^{\op}$.

We denote the restriction of $\LPK^{\op}$ to $\Pro_\Bon(\Csep)$ by
$$\tau:\Pro_\Bon(\Csep)\to\Ho\Pro(\Csep)\xrightarrow{\LPK^{\op}}\KKP.$$

We now wish to show that $\tau$ has homotopy invariance, $C^*$-stability and split exactness.
It will certainly be enough to show the following:
\begin{thm}\label{t:LPK satisfy}
The functor $\LPK^{\op}:\Pro(\Csep)\to \KKP$ has the following properties:
\begin{enumerate}
\item Invariance under simplicial homotopy.
\item For any $\{A_j\}\in \Pro(\Csep)$, and any minimal projection $p\in\cpt$, the induced morphism $\LPK^{\op}\{A_j\}\map \LPK^{\op}\{A_j\otimes\cpt\}$ is an isomorphism in $\KKP$.
\item Whenever \beqn \xymatrix{0\ar[r] & \{A_i\}\ar[r] & \{B_j\}\ar[r]^g & \{C_k\}\ar@/_1pc/[l]_s\ar[r] & 0}\eeqn is a split exact sequence in $\Pro(\Csep)$, then $\LPK^{\op}\{B_j\}\cong\LPK^{\op} \{A_i\}\oplus \LPK^{\op}\{C_k\}$ in $\KKP$.
 \end{enumerate}
\end{thm}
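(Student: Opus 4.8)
The plan is to exploit three structural features of $\LPK^{\op}$ that are already in place: it factors through $\Ho\Pro(\Csep)$; it is the opposite of a triangulated homology theory on $\Ind(\Csep^\op)$, hence carries homotopy fibre sequences in $\Pro(\Csep)$ to distinguished triangles in $\KKP$; and, being the homotopy-category shadow of the left-derived functor $\LPK=L\circ\Sigma^\infty$ of a left Quillen functor, its $\infty$-categorical refinement preserves all colimits, in particular filtered ones. The last point, combined with the identification $\Ind(\Csep^\op)_\infty\simeq\Ind((\Csep^\op)_\infty)$ of Remark \ref{r:BarHarHor}, is what will reduce each pro-level assertion to its single-object counterpart: under that equivalence an ind-object $\{A_j^\op\}$ is the filtered colimit $\colim_j A_j^\op$ of its simple constituents, and $\LPK$ commutes with this colimit.

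For (1) I would argue directly at the pro-level, with no reduction. Since $\Pro(\Csep)$ is a simplicial model category in which every object is cofibrant (Theorem \ref{t:proC_main}), for any $X,Y$ and a fibrant replacement $j\colon Y\to Y^f$ the mapping space $\Map_{\Pro(\Csep)}(X,Y^f)$ is a Kan complex computing $\Ho\Pro(\Csep)(X,Y)=\pi_0\Map_{\Pro(\Csep)}(X,Y^f)$. A simplicial homotopy between $f,g\colon X\to Y$ is by definition a map $H\colon X\to\hom(\Delta^1,Y)$, i.e. a $1$-simplex of $\Map_{\Pro(\Csep)}(X,Y)$ with faces $f$ and $g$; pushing it forward along $j$ exhibits $j_\ast f$ and $j_\ast g$ in one path component, so $f=g$ in $\Ho\Pro(\Csep)$ and hence $\LPK^{\op}(f)=\LPK^{\op}(g)$.

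For (2) I first record the single-object case: for $A\in\Csep$ the morphism $\iota_A^\op$ lies in $S_2$, so $\cone(\Sigma^\infty\iota_A^\op)\in\cA_S$ is annihilated by the localization $L$ and $\LPK(\iota_A^\op)=\Sigma^\infty_S(\iota_A^\op)$ is an equivalence (Proposition \ref{p:homology_main}). For a general $\{A_j\}$ the map $\iota_{\{A_j\}}$ is levelwise $\iota_{A_j}$, so on the ind-side it is $\colim_j\iota_{A_j}^\op$; since $\LPK$ preserves filtered colimits, $\LPK^{\op}(\iota_{\{A_j\}})$ is a filtered colimit of isomorphisms, hence an isomorphism in $\KKP$.

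Finally (3) is the step I expect to cost the most work. The first move is a standard but slightly delicate reindexing: using cofinality (Lemma \ref{l:cofinal_CDS}) I would replace the split exact sequence by an isomorphic one indexed by a single cofinite directed set $T$ so that it becomes \emph{levelwise} a split exact sequence $0\to A_t\to B_t\xrightarrow{g_t}C_t\xrightarrow{s_t}0$ in $\Csep$, with $\{A_i\}\cong\ker(g)$ computed levelwise. For each fixed $t$ the $\ast$-homomorphism section $s_t$ is completely positive and contractive, so the extension is cpc-split and $\theta(g_t)^\op\in S_1$; thus $\LPK^{\op}$ inverts $\theta(g_t)\colon\fib(g_t)\to\hfib(g_t)$, and applying $\LPK^{\op}$ to the homotopy fibre sequence $\hfib(g_t)\to B_t\xrightarrow{g_t}C_t$ yields a distinguished triangle $\LPK^{\op}(A_t)\to\LPK^{\op}(B_t)\xrightarrow{\LPK^{\op}(g_t)}\LPK^{\op}(C_t)\to\Sigma\LPK^{\op}(A_t)$ in $\KKP$. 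Because $\LPK^{\op}(g_t)$ admits the section $\LPK^{\op}(s_t)$, the connecting map vanishes and the triangle splits, giving $\LPK^{\op}(B_t)\cong\LPK^{\op}(A_t)\oplus\LPK^{\op}(C_t)$ naturally in $t$. Passing to the filtered colimit over $T$, which $\LPK$ preserves and which commutes with the finite biproduct, produces the decomposition $\LPK^{\op}\{B_j\}\cong\LPK^{\op}\{A_i\}\oplus\LPK^{\op}\{C_k\}$. The main obstacles I anticipate are verifying that $\hfib(g_t)\to B_t\to C_t$ really is the homotopy fibre sequence computed in the model structure, so that it maps to a genuine triangle, and that the reindexing can be carried out compatibly for the three systems together with their structure maps $g$ and $s$.
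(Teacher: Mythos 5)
Your parts (1) and (2) are essentially correct, but they take a different route from the paper's, which argues purely model-categorically: the paper reindexes by Lemma \ref{l:cofinal_CDS}, applies the functorial $Lw(\cW)$--$Sp(\cF)$ factorization of \cite{BarSch0} to produce fibrant replacements in $\Pro(\Csep)$, and then feeds everything through the right Quillen functor $\pi_\K^{\op}$ together with Lemma \ref{l:lim WE} (cofiltered limits of weak equivalences in $\mathbf{KK}^{\pro}$ are weak equivalences). Your replacement for this --- that $\mathbb{L}\pi_\K$ is an $\infty$-categorical left adjoint and that an ind-object is the filtered $\infty$-colimit of its levels --- is legitimate, but the second fact is not free: it is justified in the paper (proof of Theorem \ref{t:Our lim}) by the existence of generating cofibrations with finitely presentable domains, which is morally the same input as Lemma \ref{l:lim WE}; you should cite it rather than treat it as automatic. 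One genuine omission in (2): the set $S_2$ is defined using a \emph{fixed} minimal projection, while the statement quantifies over \emph{any} minimal projection $p\in\cpt$. The paper closes this by noting (citing \cite{HigETh}) that any two minimal projections induce homotopic maps $A\to A\otimes\cpt$, and then using homotopy invariance; your argument needs the same patch.

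The serious gap is in (3), exactly at the step you flagged: the rectification of the split exact sequence to a \emph{levelwise} split exact one. Lemma \ref{l:cofinal_CDS} only reindexes a single pro-object along a cofinal functor; it cannot make the relation $g\circ s=\id$ hold on the nose at each level. Even after representing $g$ and $s$ by level maps over a common cofinite directed set, the composites $g_t\circ s_t$ agree with the identity only as morphisms of $\Pro(\Csep)$, not in $\Csep^T$. Straightening a diagram over a finite category \emph{with relations} (here the category $\cT$ generated by $a\colon 0\to 1$ and $b\colon 1\to 0$ subject to $a\circ b=\id_1$) is precisely the nontrivial point, and the paper resolves it by invoking Meyer's theorem \cite{Mey} that $\Pro(\Csep^\cT)\to\Pro(\Csep)^\cT$ is an equivalence of categories. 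Without this (or an equivalent rectification statement) you have no levelwise cpc-split extensions, hence no access to the $S_1$-localization; and levelwise splitness cannot be bypassed, since $S_1$ only inverts $\theta(\phi)$ for cpc-split extensions, so one cannot instead compare the strict cokernel of $g^{\op}$ with its homotopy cofiber directly at the ind level. A second, more minor, issue: your last step takes a filtered colimit of splittings $\LPK^{\op}(B_t)\cong\LPK^{\op}(A_t)\oplus\LPK^{\op}(C_t)$ produced in the homotopy category $\KKP$, which does not have the filtered colimits you need; you must exhibit the splitting map as a map of ind-diagrams in $\mathbf{KK}^{\ind}_\infty$ (which is possible, since it is built functorially from $s$, the cofiber, and $\theta$) before passing to the colimit. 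The paper avoids both issues by staying with strict levelwise data, fibration replacements, and limits along the right Quillen functor $\pi_\K^{\op}$.
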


We will need the following lemma:
\begin{lem}\label{l:lim WE}
A cofiltered limit of weak equivalences, in the category of morphisms of $\mathbf{KK}^{\pro}$, is a weak equivalence.
\end{lem}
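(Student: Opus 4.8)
The plan is to dualize the statement and then isolate a single homotopy-theoretic fact about filtered colimits. Since $\mathbf{KK}^{\pro}=(\mathbf{KK}^{\ind})^{\op}$ and limits in the arrow category are computed on sources and targets separately, a cofiltered limit of weak equivalences in the arrow category of $\mathbf{KK}^{\pro}$ is precisely a \emph{filtered colimit} of weak equivalences in the arrow category of $\mathbf{KK}^{\ind}=\mathrm{L}_T\SpN(\Ind(\Csep^\op))$, and weak equivalences of the two model structures correspond. Hence it suffices to prove: if $\{f_s\colon X_s\to Y_s\}_{s\in S}$ is a filtered diagram of weak equivalences in $\mathbf{KK}^{\ind}$, then the induced map on strict colimits $\colim_s X_s\to\colim_s Y_s$ is a weak equivalence. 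In the $\infty$-category $\mathbf{KK}^{\ind}_\infty$ a colimit of equivalences is automatically an equivalence, so $\mathrm{hocolim}_s f_s$ is an equivalence; the whole lemma therefore reduces to showing that, for filtered $S$, the \emph{strict} colimit in $\mathbf{KK}^{\ind}$ represents the homotopy colimit in $\mathbf{KK}^{\ind}_\infty$.

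First I would establish this in the unlocalized model category $\SpN(\Ind(\Csep^\op))$. The key input is that the generating cofibrations and generating acyclic cofibrations of $\Ind(\Csep^\op)$ are the sets $\cF^{\op}$ and $\cF^{\op}\cap\cW^{\op}$ (Remark \ref{r:proC_main_dual}), whose domains and codomains lie in $\Csep^\op$; by the defining universal property of the ind-construction these objects are finitely presentable (compact) in $\Ind(\Csep^\op)$. The standard generating (acyclic) cofibrations of the spectrum category are obtained from these by shifting and by the cotensor with finite simplicial sets, so they too have finitely presentable domains and codomains. Thus $\SpN(\Ind(\Csep^\op))$ is a finitely generated model category in the sense of \cite[Section 7.4]{HovBook}, and in such a category filtered colimits preserve weak equivalences: compactness solves any lifting problem against a generator at a finite stage, so fibrations and acyclic fibrations are stable under filtered colimits, and the stable homotopy invariants detecting weak equivalences commute with filtered colimits. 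Since the strict filtered colimit functor is then homotopical, it computes the homotopy colimit in $\SpN(\Ind(\Csep^\op))_\infty$.

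It remains to transport this through the localization. The functor $\mathbb{L}\id\colon\SpN(\Ind(\Csep^\op))_\infty\to\mathbf{KK}^{\ind}_\infty$ of Proposition \ref{p:homology_main_model} is a left adjoint and hence preserves all colimits, and it is the identity on underlying objects. Therefore the homotopy colimit in $\mathbf{KK}^{\ind}_\infty$ of the diagram $\{X_s\}$ is represented by the very same strict colimit $\colim_s X_s$ (now formed and viewed $T$-locally), and similarly for $\{Y_s\}$. Consequently $\colim_s f_s$ represents $\mathrm{hocolim}_s f_s$ in $\mathbf{KK}^{\ind}_\infty$, which we already know to be an equivalence; so $\colim_s f_s$ is a weak equivalence, proving the lemma.

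I expect the finite-generation step of the second paragraph to be the main obstacle: one must genuinely use that the objects of $\Csep^\op$ become compact in $\Ind(\Csep^\op)$ and that this compactness survives the stabilization. A complementary way to handle the localization step, which makes the role of compactness most transparent, is to observe that $\KKI=\Ho\mathbf{KK}^{\ind}$ is compactly generated and that a map of $\mathbf{KK}^{\ind}$ is a weak equivalence exactly when its cofiber lies in the localizing subcategory $\Ind(\cA_S)$ generated by the compact objects $\{\cone(\Sigma^{\infty}g)\mid g\in S\}$ (Proposition \ref{p:homology_main}); being localizing, $\Ind(\cA_S)$ is closed under coproducts and hence under filtered homotopy colimits, and $\cone$ commutes with them, so a filtered colimit of weak equivalences again has cofiber in $\Ind(\cA_S)$. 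Either route leaves the same essential point: the passage from strict to homotopy filtered colimits, underwritten by compactness of the generators.
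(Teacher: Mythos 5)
Your dualization step and the transport of homotopy colimits through the localization $\mathbb{L}\id\colon \SpN(\Ind(\Csep^\op))_\infty\to\mathbf{KK}^{\ind}_\infty$ are both sound; the gap is in the middle step, where you assert that $\SpN(\Ind(\Csep^\op))$, with its \emph{stable} model structure, is finitely generated in Hovey's sense. Hovey's notion requires finiteness of the domains and codomains of the generating \emph{acyclic} cofibrations as well as of the generating cofibrations, and your justification --- that the generators of the spectrum category ``are obtained from these by shifting and by the cotensor with finite simplicial sets'' --- is only true for the levelwise (projective) model structure. The stable model structure is itself a left Bousfield localization of the projective one, and the localization machinery produces a new set of generating acyclic cofibrations over which one has no finiteness control; this failure is exactly why Hovey introduces the weaker notion of an ``almost finitely generated'' model category in \cite{Hov}. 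For the same reason your parenthetical claim that fibrations (not merely acyclic fibrations) are closed under filtered colimits is unjustified, since that is a lifting statement against the acyclic generators, as is the appeal to ``stable homotopy invariants detecting weak equivalences,'' which in Hovey's framework is only available under such almost-finite-generation hypotheses. Your alternative route via the localizing subcategory $\Ind(\cA_S)$ rests on the same unproven strict-colimit-equals-homotopy-colimit statement, so it does not repair this.

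The gap is local and fixable, and the fix reveals how close you are to the paper's own proof, which is essentially a two-line argument: by \cite[Proposition 3.6]{RaRo} it suffices that $\mathbf{KK}^{\ind}$ have a generating set of \emph{cofibrations} between finitely presentable objects --- a criterion that makes no demand on the acyclic generators. That condition is inherited by every left Bousfield localization, since cofibrations (hence generating cofibrations) do not change under localization; concretely, the generating cofibrations of $\mathbf{KK}^{\ind}$ are those of the projective structure on $\SpN(\Ind(\Csep^\op))$, namely $G_n$-images of maps in $\cF^{\op}$, whose domains and codomains lie in $\Csep^{\op}$ and are therefore finitely presentable, a property preserved by $G_n$. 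So the paper applies the compactness input directly to $\mathbf{KK}^{\ind}$ and avoids your transport apparatus entirely. Alternatively, if you want to keep your architecture, run the strict-colimit-equals-homotopy-colimit step in the \emph{projective} model structure on $\SpN(\Ind(\Csep^\op))$ --- which genuinely is finitely generated, as both generating sets of $\Ind(\Csep^\op)$ consist of maps of $\Csep^{\op}$ --- and then transport through the composite localization to $\mathbf{KK}^{\ind}_\infty$ exactly as you do; that version of your argument is correct.
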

\begin{proof}
By \cite[Proposition 3.6]{RaRo} it is enough to show that $\mathbf{KK}^{\ind}$ has a generating set of cofibrations between finitely presentable objects. This is easily seen by following the construction of $\mathbf{KK}^{\ind}$.
\end{proof}

\begin{proof}[Proof of Theorem \ref{t:LPK satisfy}]\
\begin{enumerate}
\item Let $\{A_i\}$ and $\{B_j\}$ be objects in $\Pro(\Csep)$ and let $f,g:\{A_i\}\to\{B_j\}$ be simplicially homotopic maps in $\Pro(\Csep)$. We need to show that $\LPK^{\op}f=\LPK^{\op}g$. We have $f,g:\{B_j\}\to\{A_i\}$ as morphisms in $\Pro(\Csep)^{\op}\simeq\Ind(\Csep^{\op})$. There exists a morphism $H:\Delta^1\otimes\{B_j\}\to\{A_i\}$ in $\Ind(\Csep^{\op})$ such that $H\circ i_0=f$ and $H\circ i_1=g$.  We need to show that $\LPK f=\LPK g$. Clearly it is enough to show that $\chi f=\chi g$ in $\Ho\Ind(\Csep^{\op})$, where $\chi:\Ind(\Csep^{\op})\to \Ho\Ind(\Csep^{\op})$ is the natural functor.

    We now wish to show that $i_0:\{B_j\}\to\Delta^1\otimes\{B_j\}$ and $ p:\Delta^1\otimes\{B_j\}\to\{B_j\}$ are inverse simplicial homotopy equivalences in $\Ind(\Csep^{\op})$.
    Clearly $p\circ i_0=\id_{\{B_j\}}$ so it is enough to show that $ i_0\circ p$ is simplicially homotopic to $\id_{\Delta^1\otimes\{B_j\}}$.
 We define
 $$K:\Delta^1\otimes(\Delta^1\otimes \{B_j\})\cong(\Delta^1\times\Delta^1)\otimes \{B_j\}\to \Delta^1\otimes \{B_j\}$$
 to be the map that is induced by the simplicial map $\Delta^1\times \Delta^1\to \Delta^1$ that sends $(0,0),(0,1),(1,0)$ to $0$ and $(1,1)$ to $1$.
Clearly $K$ is a simplicial homotopy from $ i_0\circ p$ to $\id_{\Delta^1\otimes\{B_j\}}$.

Since $\Ind(\Csep^{\op})$ is a simplicial model category, we know that every simplicial homotopy equivalence is a weak equivalence (see for example \cite[Proposition 9.5.16]{Hir}). Thus we obtain that $i_0:\{B_j\}\to\Delta^1\otimes\{B_j\}$ and $ p:\Delta^1\otimes\{B_j\}\to\{B_j\}$ are weak equivalences. It follows that $\chi i_0:\{B_j\}\to\Delta^1\otimes\{B_j\}$ and $\chi p:\Delta^1\otimes\{B_j\}\to\{B_j\}$ are isomorphisms in $\Ho\Ind(\Csep^{\op})$. Since $\chi p\circ \chi i_0=\chi (p\circ i_0)=\chi \id=\id$, we know that they are inverse isomorphisms. By a similar argument we obtain that $\chi i_1:\{B_j\}\to\Delta^1\otimes\{B_j\}$ and $\chi p:\Delta^1\otimes\{B_j\}\to\{B_j\}$ are inverse isomorphisms in $\Ho\Ind(\Csep^{\op})$. Thus we obtain
$$ \chi i_0=(\chi p)^{-1}=\chi i_1$$
in $\Ho\Ind(\Csep^{\op})$.

We now see that we have
$$\chi f=\chi (H\circ i_0)=\chi H\circ \chi i_0=\chi H\circ \chi i_1=\chi (H\circ i_1)=\chi g$$
in $\Ho\Ind(\Csep^{\op})$.
\item Let $\{A_j\}\in \Pro(\Csep)$, and let $p\in\cpt$ be a minimal projection. By Lemma \ref{l:cofinal_CDS} there exists a small
cofinite directed set $A$ and a cofinal functor $A\to J$.
We pull back the morphism $\{A_j\}\map \{A_j\otimes\cpt\}$ along the cofinal functor $A\to J$ and obtain a  morphism in $\Csep^A$ which we denote by $\{B_a\}\map \{B_a\otimes\cpt\}$.
By employing the construction described in~\cite[Definition 4.3]{BarSch0} we have a functorial factorization of the morphisms in $\Csep^A$ into a map in $Lw(\cW)$ followed by a map in $Sp(\cF)$.
We apply this functorial factorization to the morphisms $\{B_a\}\to *$ and $\{B_a\otimes\cpt\}\to *$ in $\Csep^A$, and obtain the following diagrams in $\Csep^A$:
$$\{B_a\}\xrightarrow{Lw(\cW)}\{B_a^f\}\xrightarrow{Sp(\cF)} *,$$
$$\{B_a\otimes\cpt\}\xrightarrow{Lw(\cW)}\{(B_a\otimes\cpt)^f\}\xrightarrow{Sp(\cF)} *.$$
Note that $\{B_a^f\}$ and $\{(B_a\otimes\cpt)^f\}$ are fibrant, as objects in the model category $\Pro(\Csep)$ (see Theorem \ref{t:proC_main}).
By the functoriality of the factorization we obtain a commutative square in $\Csep^A$ of the form
$$\xymatrix{\{B_a\}\ar[d]^{Lw(\cW)}\ar[r] & \{B_a\otimes\cpt\}\ar[d]^{Lw(\cW)}\\
\{B_a^f\}\ar[r] & \{(B_a\otimes\cpt)^f\},}$$
where the upper horizontal map is induced by the minimal projection $p$. This is also a square in $\Pro(\Csep)^A$, so we can
 apply the right Quillen functor
$$\pi_\K^{\op}:\Pro(\Csep)\cong\Ind(\Csep^{\op})^{\op}\xrightarrow{G_0^{\op}}\SpN(\Ind(\Csep^{\op}))^{\op}
\xrightarrow{\id}\mathbf{KK}^{\pro}$$
objectwise on this square and then take the limit in $\mathbf{KK}^{\pro}$. We obtain a diagram in $\mathbf{KK}^{\pro}$ of the form
$$\xymatrix{{\lim}_{{a\in A}}^{{\mathbf{KK}^{\pro}}}\pi_\K^{\op}(B_a)\ar[d]\ar[r] & {\lim}_{{a\in A}}^{{\mathbf{KK}^{\pro}}}\pi_\K^{\op}(B_a\otimes\cpt)\ar[d]\\
{\lim}_{{a\in A}}^{{\mathbf{KK}^{\pro}}}\pi_\K^{\op}(B_a^f)\ar[r] & {\lim}_{{a\in A}}^{{\mathbf{KK}^{\pro}}}\pi_\K^{\op}((B_a\otimes\cpt)^f).}$$

It follows from Proposition \ref{p:homology_main}, that for every $a\in A$ the map $\pi_\K^{\op}(B_a)\to \pi_\K^{\op}(B_a\otimes\cpt)$ is a weak equivalence in $\mathbf{KK}^{\pro}$. (Actually, Proposition \ref{p:homology_main} only shows this for the map induced by the specific minimal projection by which we localized; but it follows from, for instance, Lemma 2.1 of \cite{HigETh} that any two minimal projections will produce homotopic maps.) By Lemma \ref{l:lim WE}, we get that
$${\lim}_{{a\in A}}^{{\mathbf{KK}^{\pro}}}\pi_\K^{\op}(B_a)\to {\lim}_{{a\in A}}^{{\mathbf{KK}^{\pro}}}\pi_\K^{\op}(B_a\otimes\cpt)$$
is also a weak equivalence in $\mathbf{KK}^{\pro}$.

Being a right Quillen functor, $\pi_\K^{\op}$ transfers weak equivalences between fibrant objects to weak equivalences. Since every object in $\Csep$ is fibrant in $\Pro(\Csep)$, we see that for every $a\in A$ the map $\pi_\K^{\op}(B_a)\to \pi_\K^{\op}(B_a^f)$ is a weak equivalence in $\mathbf{KK}^{\pro}$. By Lemma \ref{l:lim WE}, we get that
$${\lim}_{{a\in A}}^{{\mathbf{KK}^{\pro}}}\pi_\K^{\op}(B_a)\to {\lim}_{{a\in A}}^{{\mathbf{KK}^{\pro}}}\pi_\K^{\op}(B_a^f)$$
is also a weak equivalence in $\mathbf{KK}^{\pro}$.
By the same argument one shows that
$${\lim}_{{a\in A}}^{{\mathbf{KK}^{\pro}}}\pi_\K^{\op}(B_a\otimes\cpt)\to {\lim}_{{a\in A}}^{{\mathbf{KK}^{\pro}}}\pi_\K^{\op}((B_a\otimes\cpt)^f)$$
is a weak equivalence in $\mathbf{KK}^{\pro}$.

From the two out of three property in $\mathbf{KK}^{\pro}$ and the fact that $\pi_\K^{\op}$ commutes with limits, we get that
$$\pi_\K^{\op}(\{B_a^f\})\cong \pi_\K^{\op}({\lim}_{{a\in A}}^{\Pro(\Csep)}B_a^f)\cong{\lim}_{{a\in A}}^{{\mathbf{KK}^{\pro}}}\pi_\K^{\op}(B_a^f)$$
$$\map {\lim}_{{a\in A}}^{{\mathbf{KK}^{\pro}}}\pi_\K^{\op}((B_a\otimes\cpt)^f)\cong \pi_\K^{\op}({\lim}_{{a\in A}}^{\Pro(\Csep)}(B_a\otimes\cpt)^f)
\cong \pi_\K^{\op}(\{(B_a\otimes\cpt)^f\})$$
is a weak equivalence in $\mathbf{KK}^{\pro}$.

Since $\{B_a^f\}$ is a fibrant replacement to $\{A_j\}$ and $\{(B_a\otimes\cpt)^f\}$ is a fibrant replacement to $\{A_j\otimes\cpt\}$, in the model category $\Pro(\Csep)$, the map $(\LPK)^{\op}(\{A_j\})\map (\LPK)^{\op}(\{A_j\otimes\cpt\})$ is isomorphic to
$\pi_\K^{\op}(\{B_a^f\})\to \pi_\K^{\op}(\{(B_a\otimes\cpt)^f\})$ as a morphism in $\KKP$, so we get (2).

\item Let
\beqn \xymatrix{0\ar[r] & \{A_i\}\ar[r] & \{B_j\}\ar[r]^g & \{C_k\}\ar@/_1pc/[l]_s\ar[r] & 0}\eeqn
be a split exact sequence in $\Pro(\Csep)$. We need to show that $(\LPK)^{\op}\{B_j\}\cong(\LPK)^{\op} \{A_i\}\oplus (\LPK)^{\op}\{C_k\}$ in $\KKP$.
It is enough to show that
\beqn \xymatrix{(\LPK)^{\op}\{A_i\}\ar[r] & (\LPK)^{\op}\{B_j\}\ar[rr]^{(\LPK)^{\op} g} & & (\LPK)^{\op}\{C_k\}\ar@/_2pc/[ll]_{(\LPK)^{\op} s}} \eeqn is part of a triangle in $\KKP$ (because then, this triangle clearly splits so $(\LPK)^{\op}\{B_j\}\cong(\LPK)^{\op} \{A_i\}\oplus (\LPK)^{\op}\{C_k\}$).

We denote by $\cT$ the category freely generated by the following graph
\beqn \xymatrix{0\ar[r]^a & 1\ar@/_1pc/[l]_b}\eeqn
with the single relation that $a\circ b=\id_1$. This is a finite category (finite number of morphisms). By \cite[Section 4]{Mey}, we have that the natural functor $$\Pro(\Csep^\cT)\to\Pro(\Csep)^\cT$$
is an equivalence of categories. The following diagram
\beqn \xymatrix{\{B_j\}\ar[r]^g & \{C_k\}\ar@/_1pc/[l]_s}\eeqn
gives an object in $\Pro(\Csep)^\cT$. Thus, from the equivalence of categories above, we get that there exists a cofitered category $L$ and a diagram in $\Csep^L$ of the form
\beqn \xymatrix{ \{Y_l\}\ar[r]^f & \{Z_l\}\ar@/_1pc/[l]_t }\eeqn
such that for any $l\in L$ we have  $f_l\circ t_l=id_{Z_l}$, that is isomorphic to
\beqn \xymatrix{\{B_j\}\ar[r]^g & \{C_k\}\ar@/_1pc/[l]_s}\eeqn
as an object in $\Pro(\Csep)^\cT$.

Let $\{X_l\}$ denote the levelwise kernel of $f$
\beqn \xymatrix{\{X_l\}\ar[r] & \{Y_l\}\ar[r]^f & \{Z_l\}. }\eeqn
Since this is also the kernel of $f$ in $\Pro(\Csep)$, we obtain a commutative diagram in $\Pro(\Csep)$
\beqn \xymatrix{\{X_l\}\ar[r]\ar[d]^\cong & \{Y_l\}\ar[r]^f\ar[d]^\cong & \{Z_l\}\ar@/_1pc/[l]_t\ar[d]^\cong\\
                \{A_i\}\ar[r] & \{B_j\}\ar[r]^g & \{C_k\}\ar@/_1pc/[l]_s}\eeqn
such that the vertical maps are isomorphisms.

By Lemma \ref{l:cofinal_CDS} there exists a small
cofinite directed set $A$ and a cofinal functor $A\to L$.
We pull back the diagram $\{X_l\}\map \{Y_l\}\map \{Z_l\}$ along the cofinal functor $A\to L$ and obtain a diagram in $\Csep^A$ which we denote by $\{X_a\}\map \{Y_a\}\map \{Z_a\}$.

We now follow a line of arguments similar to the one used in (2) above, where it is explained in more detail. We begin by employing the functorial factorization in $\Csep^A$, into a map in $Lw(\cW)$ followed by a map in $Sp(\cF)$, and obtain a commutative diagram in $\Csep^A$ of the form
$$\xymatrix{\{X_a\}\ar[d]^{Lw(\cW)}\ar[r] & \{Y_a\}\ar[d]^{Lw(\cW)} \ar[r] & \{Z_a\}\ar[d]^{Lw(\cW)}\\
\{X_a^f\}\ar[r] & \{Y_a^f\}\ar[r] & \{Z_a^f\}}$$
such that $\{X_a^f\}$, $\{Y_a^f\}$ and $\{Z_a^f\}$ are fibrant, as objects in $\Pro(\Csep)$. Applying $\pi_\K^{\op}$ objectwise and taking the limit in $\mathbf{KK}^{\pro}$ we obtain a diagram in $\mathbf{KK}^{\pro}$
$$\xymatrix{{\lim}_{{a\in A}}^{{\mathbf{KK}^{\pro}}}\pi_\K^{\op}(X_a)\ar[d]\ar[r] & {\lim}_{{a\in A}}^{{\mathbf{KK}^{\pro}}}\pi_\K^{\op}(Y_a)\ar[d]\ar[r] & {\lim}_{{a\in A}}^{{\mathbf{KK}^{\pro}}}\pi_\K^{\op}(Z_a)\ar[d]\\
{\lim}_{{a\in A}}^{{\mathbf{KK}^{\pro}}}\pi_\K^{\op}(X_a^f)\ar[r] & {\lim}_{{a\in A}}^{{\mathbf{KK}^{\pro}}}\pi_\K^{\op}(Y_a^f)\ar[r] & {\lim}_{{a\in A}}^{{\mathbf{KK}^{\pro}}}\pi_\K^{\op}(Z_a^f).}$$

For every $a\in A$ the map $\pi_\K^{\op}(X_a)\to \pi_\K^{\op}(X_a^f)$ is a weak equivalence in $\mathbf{KK}^{\pro}$. By Lemma \ref{l:lim WE}, it follows that
$${\lim}_{{a\in A}}^{{\mathbf{KK}^{\pro}}}\pi_\K^{\op}(X_a)\to {\lim}_{{a\in A}}^{{\mathbf{KK}^{\pro}}}\pi_\K^{\op}(X_a^f)$$
is also a weak equivalence in $\mathbf{KK}^{\pro}$.
By the same argument one shows that
$${\lim}_{{a\in A}}^{{\mathbf{KK}^{\pro}}}\pi_\K^{\op}(Y_a)\to {\lim}_{{a\in A}}^{{\mathbf{KK}^{\pro}}}\pi_\K^{\op}(Y_a^f),\:\:{\lim}_{{a\in A}}^{{\mathbf{KK}^{\pro}}}\pi_\K^{\op}(Z_a)\to {\lim}_{{a\in A}}^{{\mathbf{KK}^{\pro}}}\pi_\K^{\op}(Z_a^f)$$
are weak equivalence in $\mathbf{KK}^{\pro}$.

From the fact that $\pi_\K^{\op}$ commutes with limits we get that the diagram $\pi_\K^{\op}\{X_a\}\to \pi_\K^{\op}\{Y_a\}\to \pi_\K^{\op}\{Z_a\}$ is isomorphic to $\pi_\K^{\op}\{X_a^f\}\to \pi_\K^{\op}\{Y_a^f\}\to \pi_\K^{\op}\{Z_a^f\}$ in $\KKP$.

Since $\{X_a^f\}$, $\{Y_a^f\}$ and $\{Z_a^f\}$ are fibrant replacements for $\{A_i\}$, $\{B_j\}$ and $\{C_k\}$, in the model category $\Pro(\Csep)$, the diagram
$$(\LPK)^{\op}\{A_i\}\map (\LPK)^{\op}\{B_j\}\map (\LPK)^{\op}\{C_k\}$$
is isomorphic to
$$\pi_\K^{\op}\{X_a^f\}\to \pi_\K^{\op}\{Y_a^f\}\to \pi_\K^{\op}\{Z_a^f\}$$
as a diagram in $\KKP$. So we are left to show that
$$\pi_\K^{\op}\{X_a\}\to \pi_\K^{\op}\{Y_a\}\to \pi_\K^{\op}\{Z_a\}$$
is part of a triangle in $\KKP$.

We apply the above functorial factorization to the morphism $\{Y_a\}\to \{Z_a\}$ in $\Csep^A$, and obtain
$\{Y_a\}\xrightarrow{Lw(\cW)} \{Y'_a\}\xrightarrow{Sp(\cF)} \{Z_a\}$.
By \cite[Proposition 2.19]{BarSch1} we know that the morphism $\{Y'_a\}\xrightarrow{} \{Z_a\}$ is levelwise in $\cF$. Thus, for every $a\in A$ we obtain a factorization $Y_a\xrightarrow{\cW} Y'_a\xrightarrow{\cF} Z_a$, in $\Csep$. Let $\{X'_a\}$ denote the levelwise fiber of $\{Y'_a\}\xrightarrow{} \{Z_a\}$.

Let $a\in A$.
\beqn \xymatrix{X_a\ar[r]& Y_a\ar[r]^{f_a} & Z_a\ar@/_1pc/[l]_{t_a}}\eeqn
is a split exact sequence in $\Csep$. In particular, it is a cpc-split exact sequence, so the map
$$\pi_\K^{\op}(X_a)\to \pi_\K^{\op}(X'_a)$$
is a weak equivalence in $\mathbf{KK}^{\pro}$ (see Proposition \ref{p:homology_main}). By Lemma \ref{l:lim WE} and the fact that $\pi_\K^{\op}$ commutes with limits, we get that
$$\pi_\K^{\op}\{X_a\}\cong {\lim}_{{a\in A}}^{{\mathbf{KK}^{\pro}}}\pi_\K^{\op}(X_a)\to {\lim}_{{a\in A}}^{{\mathbf{KK}^{\pro}}}\pi_\K^{\op}(X'_a)\cong \pi_\K^{\op}\{X'_a\}$$
is also a weak equivalence in $\mathbf{KK}^{\pro}$.

For every $a\in A$ the map $Y_a\to Y'_a$ is a weak equivalence between fibrant objects in $\Pro(\Csep)$, so the map
$$\pi_\K^{\op}(Y_a)\to \pi_\K^{\op}(Y'_a)$$
is a weak equivalence in $\mathbf{KK}^{\pro}$. By Lemma \ref{l:lim WE} and the fact that $\pi_\K^{\op}$ commutes with limits, we get that
$$\pi_\K^{\op}\{Y_a\}\cong {\lim}_{{a\in A}}^{{\mathbf{KK}^{\pro}}}\pi_\K^{\op}(Y_a)\to {\lim}_{{a\in A}}^{{\mathbf{KK}^{\pro}}}\pi_\K^{\op}(Y'_a)\cong \pi_\K^{\op}\{Y'_a\}$$
is also a weak equivalence in $\mathbf{KK}^{\pro}$.

We thus obtain the following diagram in $\mathbf{KK}^{\pro}$:
\beqn \xymatrix{\pi_\K^{\op}\{X_a\}\ar[r]\ar[d]_\sim & \pi_\K^{\op}\{Y_a\}\ar[d]_{\sim}\ar[r] & \pi_\K^{\op}\{Z_a\}\ar[d]_=\\
\pi_\K^{\op}\{X'_a\}\ar[r] & \pi_\K^{\op}\{Y'_a\}\ar[r] & \pi_\K^{\op}\{Z_a\}.}\eeqn
It follows that it is enough to show that
$$\pi_\K^{\op}\{X'_a\}\to \pi_\K^{\op}\{Y'_a\}\to \pi_\K^{\op}\{Z_a\}$$
is part of a triangle in $\KKP$. But this follows from the fact that $\{Y'_a\}\to \{Z_a\}$ is a fibration in $\Pro(\Csep)$ and $\pi_\K^{\op}$ is a right Quillen functor.
\end{enumerate}
\end{proof}

We have thus shown that $\tau:\Pro_\Bon(\Csep)\to\KKP$ has homotopy invariance, $C^*$-stability and split exactness.
It follows that there exists a unique additive functor $i:\BKK\to \KKP$ such that the following diagram commutes
$$\xymatrix{\Pro_\Bon(\Csep)\ar[rr]^\H \ar[drr]_{\tau} & & \BKK\ar[d]^i\\
              & & \KKP.}$$

We will now bring two computational tools for calculating Bonkat's $\K$-theory for diagrams. The first is a Milnor type $\lim^1$-sequence.
\begin{thm}[{\cite[Satz 4.5.4]{Bon}}]\label{t:Bonkat lim1}
Let $\{A_n\}_{n\in\NN}$ be a sequence of nuclear separable $C^*$-algebras with surjective connecting $*$-homomorphisms $A_{n+1}\to A_n$, and let $\{B_j\}$ be an object of $\Pro_\Bon(\Csep)$. Then there is a natural short exact sequence
$$0\to {\lim}^1_n \BKK(\H\{B_j\}, \H\Sigma A_n)\to \BKK(\H\{B_j\},\H\{A_n\})\to {\lim}_n \BKK(\H\{B_j\}, \H A_n)\to 0.$$
\end{thm}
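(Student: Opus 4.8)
Since Theorem \ref{t:Bonkat lim1} is quoted from \cite{Bon}, I only sketch the structural argument one would use to reprove it intrinsically in the triangulated category $\BKK$. The plan is to realize $\H\{A_n\}$ as a homotopy limit of the tower $(\H A_n)_n$ and then invoke the standard Milnor $\lim$--$\lim^1$ exact sequence for homotopy limits of sequential towers in a triangulated category with countable products. Throughout I would use that $\BKK$ is triangulated with countable products and that $\Sigma^2\cong\id$ by Bott periodicity; both are inherited from Kasparov $\KK$-theory through the universal property of $\H$.

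The formal heart is purely homological. For a tower $X_0\leftarrow X_1\leftarrow\cdots$ in $\BKK$ one defines $\mathrm{holim}_n X_n$ by the distinguished triangle
$$\mathrm{holim}_n X_n\to \prod_n X_n\xrightarrow{1-\mathrm{shift}}\prod_n X_n\to \Sigma\,\mathrm{holim}_n X_n.$$
Applying $\BKK(\H\{B_j\},-)$ to this triangle and using $\BKK(\H\{B_j\},\prod_n X_n)\cong\prod_n\BKK(\H\{B_j\},X_n)$, the long exact sequence collapses: the kernel of $1-\mathrm{shift}$ on $\prod_n\BKK(\H\{B_j\},X_n)$ is $\lim_n\BKK(\H\{B_j\},X_n)$, while its cokernel on $\prod_n\BKK(\H\{B_j\},\Sigma^{-1}X_n)$ is $\lim^1_n\BKK(\H\{B_j\},\Sigma^{-1}X_n)$. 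This produces the short exact sequence with middle term $\BKK(\H\{B_j\},\mathrm{holim}_n X_n)$, and setting $X_n=\H A_n$ together with $\Sigma^{-1}\H A_n\cong\H\Sigma A_n$ (Bott periodicity) yields exactly the stated $\lim^1$-term.

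The substantive step is the comparison isomorphism $\H\{A_n\}\cong\mathrm{holim}_n\H A_n$ in $\BKK$, and this is where the hypotheses enter. Because the connecting maps $A_{n+1}\to A_n$ are surjective, the tower is represented inside $\Pro_\Bon(\Csep)$ by the Milnor presentation
$$0\to \{A_n\}\to \{\textstyle\prod_{k\le n}A_k\}_n\xrightarrow{1-\mathrm{shift}}\{\textstyle\prod_{k\le n}A_k\}_n\to 0,$$
a short exact sequence of projective systems with surjective connecting homomorphisms whose partial products $\prod_{k\le n}A_k$ are separable; here the object in the middle is the categorical product of the $A_n$ in $\Pro(\Csep)$. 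Since each $A_n$ is nuclear, so are these partial products, so by the Choi--Effros lifting theorem the sequence admits completely positive contractive splittings levelwise. Hence it is cpc-split, and the split-exactness of Bonkat's theory (together with its cpc-excision) forces $\H$ to carry it to a distinguished triangle. One then checks that $\H$ sends the pro-product to the product $\prod_n\H A_n$ in $\BKK$ and the map $1-\mathrm{shift}$ to its namesake, so that comparing this triangle with the defining triangle for $\mathrm{holim}_n\H A_n$ identifies $\H\{A_n\}$ with the homotopy limit.

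Combining the two steps gives the asserted exact sequence. The main obstacle is precisely the comparison $\H\{A_n\}\cong\mathrm{holim}_n\H A_n$: one must guarantee that the underived projective limit computes the homotopy limit, which is exactly why surjectivity of the connecting maps (making the tower ``fibrant'', so the Milnor presentation is a genuine short exact sequence) and nuclearity of the $A_n$ (furnishing the cpc-splittings that let $\H$ be exact on that sequence) are both indispensable. Relaxing either hypothesis is what the authors' own Theorems \ref{t:Our lim1} and \ref{t:Our lim} achieve by replacing this analytic input with the fibrant-replacement machinery of the model structure on $\Pro(\Csep)$.
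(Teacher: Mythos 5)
First, note what you are being compared against: the paper itself offers no proof of this statement --- it is quoted verbatim from Bonkat's thesis \cite[Satz 4.5.4]{Bon}, and the authors prove only their own analogues (Theorems \ref{t:Our lim1} and \ref{t:Our lim}) by model-categorical means. Your sketch therefore has to stand on its own as a reconstruction of Bonkat's argument. Its purely formal part is fine: in a triangulated category with countable products the homotopy-limit triangle yields the $\lim$--$\lim^1$ sequence, and Bott periodicity identifies $\Sigma^{-1}\H A_n$ with $\H\Sigma A_n$. The problems lie in the two ``checks'' that carry all of the analytic content.

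The genuine gap is the inference ``the levelwise extensions admit Choi--Effros cpc sections, hence the sequence of projective systems is cpc-split.'' A cpc splitting of an extension of projective systems must itself be a morphism of projective systems, and for your Milnor presentation no compatible system of completely positive sections can exist. Indeed, write $P_n=\prod_{k\le n}A_k$ and let $\phi_n:P_n\to P_{n-1}$ be the quotient $(a_0,\dots,a_n)\mapsto (a_k-p(a_{k+1}))_{k\le n-1}$, where $p$ denotes the connecting maps. Suppose $s_n:P_{n-1}\to P_n$ were cp sections commuting with the coordinate-projection structure maps. Writing $s_n=(u_n,v_n)$ with $v_n:P_{n-1}\to A_n$ the last component, compatibility forces $u_n=s_{n-1}\circ(\text{projection})$, and the section identity $\phi_n\circ s_n=\mathrm{id}$ in the last coordinate then reads $p(v_n(b))=s_{n-1}(b_0,\dots,b_{n-2})_{n-1}-b_{n-1}$; restricting to $b_0=\dots=b_{n-2}=0$ (and using linearity of $s_{n-1}$) exhibits the completely positive map $p\circ v_n$ as $-\mathrm{id}_{A_{n-1}}$ on the last coordinate, which is impossible whenever $A_{n-1}\neq 0$. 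So the extension of pro-systems is \emph{not} semi-split in the strict sense, and whether Bonkat's excision machinery applies to merely levelwise semi-split extensions is precisely the nontrivial point your argument elides; note also that such ``cpc-excision'' is not among the three axioms characterizing $\H$ (homotopy invariance, $C^*$-stability, and split exactness, the last requiring $*$-homomorphic sections that are pro-morphisms), so it must in any case be imported from \cite{Bon} as a separate theorem and its hypotheses verified. A second, related gap: the claim that $\H$ carries the pro-product $\{P_n\}_n$ to $\prod_n\H A_n$ in $\BKK$ is exactly the split ($\lim^1$-free) case of the theorem you are proving --- it asserts that maps into that pro-object have no $\lim^1$ obstruction --- so it cannot be disposed of by ``one then checks''; it needs Bonkat's countable additivity results as independent input. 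By contrast, the paper's proof of its own Theorem \ref{t:Our lim1} sidesteps both issues by replacing the tower with a levelwise equivalent tower of Schochet fibrations (fibrant replacement in $\Pro(\Csep)$) and quoting \cite[Proposition 7.3.2]{HovBook}, which is exactly why no analytic splittings are needed there.
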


\begin{thm}[{\cite[Satz. 4.5.5]{Bon}}]\label{t:Bonkat lim}
Let $\{A_n\}_{n\in\NN}$ be a sequence of nuclear separable $C^*$-algebras with surjective connecting $*$-homomorphisms $A_{n+1}\to A_n$, and let $B$ be an object in $\Csep$. Then there exists a natural isomorphism
$$\colim_n \BKK(\H A_n, \H B)\cong \BKK(\H\{ A_n\}, \H B).$$
\end{thm}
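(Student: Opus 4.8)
Since this is Bonkat's theorem, I would prove it by reinterpreting it inside the compactly generated framework built above; the point to keep in mind throughout is that it concerns maps \emph{out of} a sequential pro-object into a fixed separable $C^*$-algebra, so the answer is a bare colimit with no derived correction (in contrast to Theorem~\ref{t:Bonkat lim1}, which computes maps \emph{into} the tower and carries a $\lim^1$-term). The plan is to pass through the embedding $i:\BKK\to\KKP$ and the functor $\LPK^{\op}$, dualize into the ind-picture $\KKI=\Ho\mathbf{KK}^{\ind}$, and there recognise the assertion as the statement that the compact object $\LPK B$ sees a filtered homotopy colimit as an honest colimit of hom-groups.

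First I would translate. Under $\KKP=\KKI^{\op}$ the group $\KKP(\LPK^{\op}\{A_n\},\LPK^{\op}B)$ becomes $\KKI(\LPK B,\LPK(\colim_n A_n^{\op}))$, where $\{A_n\}\in\Pro(\Csep)$ corresponds to the filtered system $\{A_n^{\op}\}$ in $\Ind(\Csep^{\op})$ whose value is the formal colimit $\colim_n A_n^{\op}$. The two facts I would then assemble are: (a) $\LPK B$ is a compact object of $\KKI$, since $\mathbf{KK}^{\ind}$ is compactly generated with the images of separable $C^*$-algebras as compact generators; and (b) $\LPK$ carries $\colim_n A_n^{\op}$ to the filtered homotopy colimit $\colim_n\LPK A_n$. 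For (b) the key observation is that a surjective $*$-homomorphism of separable $C^*$-algebras is a Schochet fibration (it admits a continuous set-theoretic section by Bartle--Graves), so the surjective connecting maps $A_{n+1}\to A_n$ become \emph{cofibrations} $A_n^{\op}\to A_{n+1}^{\op}$ in $\Ind(\Csep^{\op})$ between cofibrant objects; hence the sequential colimit already computes the homotopy colimit, and $\LPK$, being the left derived functor of a left Quillen functor, preserves it. Combining (a) and (b),
$$\KKI\!\left(\LPK B,\ \colim_n\LPK A_n\right)\cong\colim_n\KKI(\LPK B,\LPK A_n),$$
and unwinding the dualization gives exactly $\BKK(\H\{A_n\},\H B)\cong\colim_n\BKK(\H A_n,\H B)$; naturality in $B$ is inherited from each step.

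The main obstacle is step (b), that is, matching the formal ind-colimit defining $\{A_n\}$ with the genuine homotopy colimit. This is precisely where the hypotheses on the tower are used: surjectivity of the connecting homomorphisms is what turns the diagram into one of cofibrations (so strict colimit $=$ homotopy colimit), and countability keeps it sequential, so that no $\lim^1$-term can intervene on the mapping-out side. The contrast with Theorem~\ref{t:Bonkat lim1} is instructive: there one maps \emph{out of} the same homotopy colimit, which produces a homotopy limit and hence the Milnor $\lim^1$-sequence, whereas here one maps \emph{into} it from the compact object $\LPK B$, which collapses to a plain colimit. Nuclearity, as in Bonkat's original argument, enters only to guarantee that the relevant extensions remain exact after tensoring, so that the levelwise model of the tower assembles correctly; once (a) and (b) are secured the remainder is formal.
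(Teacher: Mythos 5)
The statement you were asked to prove is Bonkat's Satz~4.5.5: it is an assertion about Bonkat's category $\BKK$ and his functor $\H$, which are defined analytically (via an extension of the Kasparov bimodule picture), and the paper does not prove it at all --- it is quoted as an external input from \cite{Bon}. Your argument does not actually prove this statement; what it proves is the paper's own analogue, Theorem \ref{t:Our lim}, and then transports it to $\BKK$ via the ``translation'' $\BKK(\H\{A_n\},\H B)\cong \KKP(\LPK^{\op}\{A_n\},\LPK^{\op}B)$. That identification is exactly where the argument breaks down: at this point in the paper one only knows that the comparison functor $i:\BKK\to\KKP$ \emph{exists} (by the universal property of Bonkat's construction, using Theorem \ref{t:LPK satisfy}); the fact that $i$ induces isomorphisms on hom-groups involving the pro-object $\{A_n\}$ is precisely Theorem \ref{t:main Bonkat}, whose proof is a Five Lemma argument that takes Bonkat's Theorems \ref{t:Bonkat lim1} and \ref{t:Bonkat lim} (the very statement you are proving) as inputs alongside Theorems \ref{t:Our lim1} and \ref{t:Our lim}. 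So within the paper's logical structure your proof is circular. The identification of the two theories is only available for \emph{simple} objects (both restrict to Kasparov's $\KK$-theory on $\Csep$); for towers it is a conclusion, not a hypothesis.

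There is also a concrete false step in your item (b): a surjective $*$-homomorphism of separable $C^*$-algebras is \emph{not} in general a Schochet fibration. Admitting a continuous set-theoretic section (Bartle--Graves) is much weaker than the homotopy lifting property defining Schochet fibrations; in the commutative case, $C(X)\to C(A)$ is a Schochet fibration essentially when the closed inclusion $A\subseteq X$ has the homotopy extension property, and non-cofibration closed inclusions of compact metric spaces give surjections that are not Schochet fibrations. This is why the paper's proof of Theorem \ref{t:Our lim} never argues this way: there the formal filtered colimit in $\Ind(\Csep^{\op})$ is identified with the homotopy colimit using the fact that the model structure has a generating set of cofibrations between finitely presentable objects, and $\LPK^{\op}B$ is shown to be compact because $B$ is compact in $\Ind(\Csep^{\op})_{\infty}$ and the derived right adjoint preserves filtered colimits. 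As a result the paper's statement needs no surjectivity, no nuclearity, and no countability --- those hypotheses appear in Theorem \ref{t:Bonkat lim} only because they are forced by Bonkat's analytic construction, and any proof of \emph{his} theorem has to work inside that construction (as his does), not inside the model category built here.
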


We will now show that our $\K$-theory, namely $(\LPK)^{\op}:\Pro(\Csep)\to \KKP$, also has the same type of computational tools as Bonkat's, but in an even more general setting.
\begin{thm}\label{t:Our lim1}
Let $\{A_n\}_{n\in\NN}$ be a sequence of separable $C^*$-algebras (that need not be nuclear) with connecting $*$-homomorphisms $A_{n+1}\to A_n$ (that need not be surjective), and let $\{B_j\}$ be an object of $\Pro(\Csep)$. Then there is a natural short exact sequence
$$0\to {\lim}^1_n \KKP(\LPK^{\op}\{B_j\}, \LPK^{\op}\Sigma A_n)\to \KKP(\LPK^{\op}\{B_j\},\LPK^{\op}\{A_n\})\to {\lim}_n \KKP(\LPK^{\op}\{B_j\}, \LPK^{\op}A_n)\to 0.$$
\end{thm}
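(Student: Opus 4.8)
The plan is to identify $\LPK^{\op}\{A_n\}$ with the homotopy limit, taken in the stable model category $\mathbf{KK}^{\pro}$, of the tower $\{\LPK^{\op}A_n\}_{n\in\NN}$, and then to invoke the standard Milnor $\lim^1$ exact sequence for sequential homotopy limits in the triangulated category $\KKP$. Two features of our setup make this cleaner than Bonkat's argument: the poset $\NN$ is already a cofinite directed set, so no passage to a cofinal subsystem is needed, and the surjectivity and nuclearity hypotheses of Theorems \ref{t:Bonkat lim1} and \ref{t:Bonkat lim} are replaced by the fibrant replacement available in the model structure of Theorem \ref{t:proC_main}.

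First I would reduce to a tower of fibrations, reusing the technique of the proof of Theorem \ref{t:LPK satisfy}. Viewing $\{A_n\}_{n\in\NN}$ as the cofiltered limit ${\lim}_{n}^{\Pro(\Csep)} A_n$ of simple objects, apply the functorial factorization of \cite[Definition 4.3]{BarSch0} to $\{A_n\}\to\ast$ in $\Csep^{\NN}$ to obtain a levelwise weak equivalence $\{A_n\}\xrightarrow{Lw(\cW)}\{A_n^f\}$ with $\{A_n^f\}\xrightarrow{Sp(\cF)}\ast$. Then $\{A_n^f\}$ is fibrant in $\Pro(\Csep)$, and since the matching object of the cofinite poset $\NN$ at $n\ge 1$ is $A_{n-1}^f$, membership in $Sp(\cF)$ says precisely that each connecting map $A_n^f\to A_{n-1}^f$ is a Schochet fibration; thus $\{A_n^f\}$ is a tower of fibrations between fibrant objects. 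Because $\pi_\K^{\op}$ is a right Quillen functor it preserves fibrations and fibrant objects and commutes with limits, so
$$\LPK^{\op}\{A_n\}\cong\pi_\K^{\op}\{A_n^f\}\cong\pi_\K^{\op}\big({\lim}_{n}^{\Pro(\Csep)} A_n^f\big)\cong{\lim}_{n}^{\mathbf{KK}^{\pro}}\pi_\K^{\op}(A_n^f),$$
and the right-hand side, being the limit of a tower of fibrations between fibrant objects, computes the homotopy limit in $\mathbf{KK}^{\pro}$. As each $\pi_\K^{\op}(A_n^f)$ represents $\LPK^{\op}A_n$, this exhibits $\LPK^{\op}\{A_n\}$ as the homotopy limit of the tower $\{\LPK^{\op}A_n\}_n$.

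Next I would apply the Milnor sequence. Writing $Y_n:=\LPK^{\op}A_n$ and $W:=\LPK^{\op}\{B_j\}$, the stable model category $\mathbf{KK}^{\pro}$ has countable products, and the homotopy limit of the tower fits into a distinguished triangle in $\KKP$
$$\operatorname{holim}_n Y_n\to \prod_n Y_n\xrightarrow{\,1-\mathrm{shift}\,}\prod_n Y_n.$$
Applying the cohomological functor $\KKP(W,-)$ and using $\KKP(W,\prod_n Y_n)\cong\prod_n\KKP(W,Y_n)$, the long exact sequence collapses to
$$0\to {\lim}^1_n \KKP(W,\Omega Y_n)\to \KKP(W,\operatorname{holim}_n Y_n)\to {\lim}_n \KKP(W,Y_n)\to 0,$$
since $\ker(1-\mathrm{shift})_\ast={\lim}_n$ and $\mathrm{coker}(1-\mathrm{shift})_\ast={\lim}^1_n$ by definition, the loop object $\Omega Y_n$ (loops in $\KKP$) entering through the connecting map of the triangle (cf. \cite{Nee} for the triangulated-category formulation). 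By the first step the middle term is $\KKP(W,\LPK^{\op}\{A_n\})$, and the right term is ${\lim}_n\KKP(W,\LPK^{\op}A_n)$, matching the statement.

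It then remains to identify the $\lim^1$ term, and here lies the only delicate bookkeeping: the suspension across the opposite category. Since $\KKP=(\KKI)^{\op}$, the loop functor on $\KKP$ is the suspension functor on $\KKI$; and as $\LPK$ is a triangulated homology theory on $\Ind(\Csep^\op)$, it intertwines the $C^*$-suspension $A_n\mapsto\Sigma A_n$ (i.e. $S^1\wedge-$ on $\Csep^\op$) with the categorical suspension on $\KKI$. Tracing these identifications gives $\Omega Y_n=\Omega\,\LPK^{\op}A_n\cong\LPK^{\op}\Sigma A_n$ in $\KKP$, so the leftmost term becomes ${\lim}^1_n\KKP(W,\LPK^{\op}\Sigma A_n)$, completing the sequence; naturality in $\{B_j\}$ and $\{A_n\}$ follows from the functoriality of the fibrant replacement, of $\pi_\K^{\op}$, and of the triangle. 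I expect the main obstacle to be the first step — rigorously exhibiting $\LPK^{\op}\{A_n\}$ as a genuine homotopy limit rather than a formal pro-limit — since this is exactly where the absence of surjectivity and nuclearity hypotheses is absorbed by the fibrant replacement, and where the right Quillen property of $\pi_\K^{\op}$ (preservation of fibrations and commutation with limits) is indispensable.
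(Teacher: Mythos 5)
Your proposal is correct and is essentially the paper's own proof transported across the duality $\Pro(\Csep)\cong\Ind(\Csep^{\op})^{\op}$: the paper dualizes the statement to $\KKI$, applies the same functorial factorization from \cite{BarSch0} to get the tower $\{A_n^f\}$ of Schochet fibrations, reads it as a sequence of cofibrations in $\Ind(\Csep^{\op})$ with colimit $\{A_n^f\}$, applies the left Quillen functor $\pi_\K$, and concludes with the Milnor sequence of \cite[Proposition 7.3.2]{HovBook} together with $S^1\wedge\pi_\K X\cong\pi_\K(S^1\wedge X)$. Your tower-of-fibrations, homotopy-limit, and $(1-\mathrm{shift})$-triangle argument is the exact mirror image of those steps, so the two proofs differ only in which side of the opposite-category duality they are written on.
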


\begin{proof}
We need to show that
there is a natural short exact sequence
$$0\to {\lim}^1_n \KKI(\LPK\Sigma A_n, \LPK\{B_j\})\to \KKI(\LPK\{A_n\},\LPK\{B_j\})\to {\lim}_n \KKI(\LPK A_n,\LPK\{B_j\} )\to 0.$$

Let $\mathbb{N}$ denote the cofinite directed poset of natural numbers.
By employing the construction described in~\cite[Definition 4.3]{BarSch0} we have a functorial factorization of the morphisms in $\Csep^\NN$ into a map in $Lw(\cW)$ followed by a map in $Sp(\cF)$.
We apply this functorial factorization to the morphisms $\{A_n\}\to *$ in $\Csep^\NN$, and obtain the following diagram in $\Csep^\NN$:
$$\{A_n\}\xrightarrow{Lw(\cW)}\{A^f_n\}\xrightarrow{Sp(\cF)}*.$$
By \cite[Proposition 2.17]{BarSch1}, we know that every map $A^f_{n+1}\to A^f_n$ is a Schochet fibration. Thus, we have a sequence of cofibrations
$$*\xrightarrow{} A^f_0\xrightarrow{} A^f_1 \xrightarrow{}\cdots\xrightarrow{} A^f_n\xrightarrow{}\cdots$$
in the pointed model category $\Ind(\Csep^{\op})$, with colimit $\{A_n^f\}$. It follows that
$$*\xrightarrow{} \pi_\K A^f_0\xrightarrow{} \pi_\K A^f_1 \xrightarrow{}\cdots\xrightarrow{} \pi_\K A^f_n\xrightarrow{}\cdots$$
is a sequence of cofibrations in the pointed model category $\mathbf{KK}^{\ind}$, with colimit $\pi_\K\{A_n^f\}$.
By \cite[Proposition 7.3.2]{HovBook}, for every fibrant $Y\in\mathbf{KK}^{\ind}$ we have an exact sequence
$$0\xrightarrow{}{\lim}^1_n[\Sigma (\pi_\K A_n^f) ,Y ] \xrightarrow{} [\pi_\K\{A_n^f\},Y ] \xrightarrow{}{\lim}_n[\pi_\K A_n^f ,Y ] \xrightarrow{}0.$$
(Note that for every $X\in \Ind(\Csep^{\op})$ we have $S^1\wedge (\pi_\K X)\cong \pi_\K(S^1\wedge X)$. See Theorem \ref{t:spectra}.)
\end{proof}

\begin{thm}\label{t:Our lim}
Let $\{A_j\}_{n\in J}$ be an object in $\Pro(\Csep)$ and let $B$ be an object in $\Csep$. Then there exists a natural isomorphism
$$\colim_j \KKP(\LPK^{\op} A_j, \LPK^{\op} B)\cong \KKP(\LPK^{\op} \{ A_j\}, \LPK^{\op} B).$$
\end{thm}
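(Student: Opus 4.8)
The plan is to pass to the dual ind-picture and reduce the statement to the elementary fact that mapping out of a \emph{compact} object commutes with filtered colimits. Exactly as in the proof of Theorem \ref{t:Our lim1}, using the identification $\KKP=(\KKI)^{\op}$ and the definition of $\LPK^{\op}$ as the opposite of $\LPK$, the asserted isomorphism is equivalent to
$$\colim_{j}\KKI(\LPK B,\LPK A_j)\cong \KKI(\LPK B,\LPK\{A_j\}),$$
where $\{A_j\}$ is now regarded as an object of $\Ind(\Csep^{\op})$ and the colimit is taken over the filtered category $J^{\op}$ (recall that $J$ is cofiltered, so $J^{\op}$ is filtered, and a quick check of variance shows the structure maps of the colimit come from the connecting maps $A_{j_1}\to A_{j_2}$).

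First I would identify $\LPK\{A_j\}$ with the filtered colimit $\colim_{J^{\op}}\LPK A_j$ inside the stable $\infty$-category $\mathbf{KK}^{\ind}_\infty\simeq\cH_S$. By Proposition \ref{p:noncomm_space} I may work in $\Ne\cS_*\simeq\Ind(\Csepinf^\op)$, where by the very construction of the $\infty$-categorical ind-completion the object corresponding to $\{A_j\}$ is the filtered colimit $\colim_{J^{\op}}A_j$ of its simple constituents. Under the equivalences of Propositions \ref{p:noncomm_space}, \ref{p:N_spectra} and \ref{p:homology_main_model} the derived functor $\LPK$ corresponds to $\Sigma^\infty_S=L\circ\Sigma^\infty\colon\Ne\cS_*\to\cH_S$, which is a composite of left adjoints and therefore preserves all colimits. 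Hence $\LPK\{A_j\}\simeq\colim_{J^{\op}}\LPK A_j$, a genuine filtered colimit in $\cH_S$.

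Next I would observe that $\LPK B$ is a compact object of $\cH_S$. Indeed, $\LPK B$ is the image of $B$ under the composite $\Csepinf^\op\xrightarrow{\Sigma^\infty}\SW(\Csepinf^\op)\to\SW(\Csepinf^\op)/\cA_S$, so it lies in the essential image of the small subcategory $\SW(\Csepinf^\op)/\cA_S$ inside $\cH_S=\Ind(\SW(\Csepinf^\op)/\cA_S)$, and such objects are compact by the defining universal property of the ind-completion. Compactness then gives a natural equivalence
$$\Map_{\cH_S}(\LPK B,\colim_{J^{\op}}\LPK A_j)\simeq\colim_{J^{\op}}\Map_{\cH_S}(\LPK B,\LPK A_j).$$
Applying $\pi_0$, which commutes with filtered colimits of spaces, and using $\KKI(X,Y)=\pi_0\Map_{\cH_S}(X,Y)$, yields the desired isomorphism; translating back through $\KKP=(\KKI)^{\op}$ recovers the statement of the theorem.

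The main obstacle is the bookkeeping of the second step: one must be certain that the object $\{A_j\}$ of the model category $\Ind(\Csep^\op)$ is carried to the filtered colimit $\colim_{J^{\op}}A_j$ under the identification $\Ind(\Csep^\op)_\infty\simeq\Ind(\Csepinf^\op)$ of Remark \ref{r:BarHarHor}, and that $\LPK$ preserves this colimit as a \emph{homotopy} colimit rather than merely the underlying formal one. This is precisely where the naturality in Theorem \ref{t:BarHarHor}, which matches the two ind-constructions compatibly with their universal properties, is essential. Once the colimit is correctly identified and the compactness of $\LPK B$ is in hand, the remaining steps are formal.
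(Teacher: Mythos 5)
Your overall strategy coincides with the paper's: pass to the ind-picture, identify $\{A_j\}$ with the filtered homotopy colimit of its constituents, use that $\LPK$ is a left adjoint of $\infty$-categories (hence preserves colimits), invoke compactness of $\LPK B$, and finish by commuting $\pi_0$ past the filtered colimit. Your compactness argument is a legitimate variant of the paper's: the paper deduces compactness of $\LPK B$ in $\mathbf{KK}^{\ind}_\infty$ from compactness of $B$ in $\Ind(\Csep^{\op})_\infty$ together with the fact that the right adjoint $\mathbb{R}\chi_K$ commutes with filtered colimits, whereas you use the presentation $\cH_S\simeq\Ind(\SW(\Csepinf^\op)/\cA_S)$ from Proposition \ref{p:homology_main} and the standard fact that objects in the image of the small subcategory are compact in an ind-completion. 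Either route works, and yours has the merit of being visibly independent of the model structure.

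However, there is a genuine gap at the step you yourself flag as the main obstacle, and your proposed resolution does not close it. Theorem \ref{t:BarHarHor} (with Remark \ref{r:BarHarHor}) asserts that the filtered-colimit-preserving extension of the natural functor $\Csepinf^{\op}\to\Ind(\Csep^{\op})_\infty$ is an equivalence $\Ind(\Csepinf^{\op})\simeq\Ind(\Csep^{\op})_\infty$. This tells you where \emph{formal} ind-objects of $\Ind(\Csepinf^{\op})$ land: the formal ind-object $(A_j)_{j\in J^{\op}}$ is sent to the $\infty$-categorical colimit $\colim^{\infty}_{J^{\op}}A_j$ computed in $\Ind(\Csep^{\op})_\infty$. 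It does \emph{not} tell you that this object agrees with the image of $\{A_j\}$ under the localization functor $\Ind(\Csep^{\op})\to\Ind(\Csep^{\op})_\infty$: although $\{A_j\}$ is the $1$-categorical colimit of the $A_j$ in $\Ind(\Csep^{\op})$, localization functors do not in general preserve colimits, not even filtered ones, so "naturality" of the comparison equivalence cannot by itself perform this bookkeeping. What is needed is exactly the ingredient the paper inserts at this point: the model category $\Ind(\Csep^{\op})$ has a generating set of cofibrations between finitely presentable objects (the same fact invoked in Lemma \ref{l:lim WE}, via \cite{RaRo}), which implies that filtered colimits in $\Ind(\Csep^{\op})$ compute homotopy colimits; hence $\{A_j\}\simeq\colim^{\infty}_{J^{\op}}A_j$ already holds in $\Ind(\Csep^{\op})_\infty$. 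Once this fact is supplied, the remaining steps of your argument go through and agree in substance with the paper's proof.
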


\begin{proof}
For every $X\in \cS$ and every $A\in \Set$ we have a natural isomorphism
$$\Hom_\Set(\pi_0(X),A)\simeq\Map_{\cS_{\infty}}(X,D(A)),$$
where $D(A)$ denotes the constant simplicial set on $A$.
Thus, there is an adjunction between $\infty$-categories
$$\pi_0:\cS_{\infty}\rightleftarrows\Ne(\Set):D.$$
It follows that $\pi_0:\cS_{\infty}\to\Ne(\Set)$ commutes with $\infty$-colimits. Thus we have natural isomorphisms
$$\KKP(\LPK^{\op} \{ A_j\}, \LPK^{\op} B)\simeq\pi_0\Map_{\mathbf{KK}^{\pro}_{\infty}}(\LPK^{\op} \{ A_j\}, \LPK^{\op} B)\simeq\pi_0\Map_{\mathbf{KK}^{\pro}_{\infty}}(\LPK^{\op} {\lim}_j^{\infty}A_j, \LPK^{\op} B)\simeq$$
$$\simeq\pi_0\Map_{\mathbf{KK}^{\pro}_{\infty}}({\lim}_j^{\infty}\LPK^{\op} A_j, \LPK^{\op}B)\simeq\pi_0{\colim}_j^{\infty} \Map_{\mathbf{KK}^{\pro}_{\infty}}(\LPK^{\op} A_j, \LPK^{\op} B)\simeq$$
$$\simeq{\colim}_j^{\infty}\pi_0\Map_{\mathbf{KK}^{\pro}_{\infty}}(\LPK^{\op} A_j, \LPK^{\op} B)\simeq{\colim}_j\KKP(\LPK^{\op} A_j, \LPK^{\op} B).$$

In the diagram above we take the derived functors in the higher categorical sense:
$$\mathbb{L}\pi_K:\Ind(\Csep^{\op})_{\infty}\rightleftarrows
\mathbf{KK}^{\ind}_{\infty}:\mathbb{R}\chi_K$$
where $\pi_K:=\id\circ G_0$ and $\chi_K:=Ev_0\circ\id$.
The fact that
$$\{ A_j\}\cong {\lim}_j A_j\simeq {\lim}_j^{\infty}A_j$$
in $\Ind(\Csep^{\op})_{\infty}$ follows from the fact that the model category $\Ind(\Csep^{\op})$ has a generating set of cofibrations between finitely presentable objects. The fact that $\LPK^{\op}B$ is compact in $\mathbf{KK}^{\ind}_{\infty}$ follows from the fact that $B$ is compact in $\Ind(\Csep^{\op})_{\infty}$ and $\mathbb{R}\chi_K$ commutes with filtered colimits.
\end{proof}

\begin{rem}
Theorems \ref{t:Our lim1} and \ref{t:Our lim} remain true for all triangulated homology theories defined in Section \ref{ss:THT}.
\end{rem}

We are now ready to state our result connecting Bonkat's $\K$-theory and ours.
\begin{thm}\label{t:main Bonkat}
Let $\{A_n\}_{n\in\NN}$ and $\{B_m\}_{m\in\NN}$ be sequences of nuclear separable $C^*$-algebras with surjective connecting $*$-homomorphisms $A_{n+1}\to A_n$ and $B_{m+1}\to B_m$. Then $i:\BKK\to\KKP$ induces a natural isomorphism
$$\BKK(\H\{B_m\},\H\{A_n\})\cong \KKP(\LPK^{\op}\{B_m\},\LPK^{\op}\{A_n\}).$$
\end{thm}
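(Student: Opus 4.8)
The plan is to exploit the universal property underlying Bonkat's construction together with the four computational tools already established, thereby reducing the comparison to the case of two single separable $C^*$-algebras. Recall first that, having verified in Theorem \ref{t:LPK satisfy} that $\tau$ is homotopy invariant, $C^*$-stable and split exact, the universal property of $\H:\Pro_\Bon(\Csep)\to\BKK$ (\cite[Satz 3.5.10]{Bon}) produces a unique additive functor $i:\BKK\to\KKP$ with $\tau=i\circ\H$. Thus on morphism groups $i$ gives a natural homomorphism $\BKK(\H\{B_m\},\H\{A_n\})\to\KKP(\LPK^{\op}\{B_m\},\LPK^{\op}\{A_n\})$, and the claim is that it is an isomorphism. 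The strategy is to first reduce the second variable to a constant system via the Milnor-type sequences, then reduce the first variable to a constant system via the colimit formulas, and finally to settle the resulting base case. Note that the nuclearity and surjectivity hypotheses of Theorem \ref{t:main Bonkat} are exactly what is needed to invoke the Bonkat-side tools; our tools of Theorems \ref{t:Our lim1} and \ref{t:Our lim} apply unconditionally.

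I would begin with the base case: for $A,B\in\Csep$ I claim that $i$ restricts to an isomorphism $\BKK(\H A,\H B)\xrightarrow{\sim}\KKP(\LPK^{\op}A,\LPK^{\op}B)$. Both sides are identified with Kasparov's group $\KK(A,B)$: on our side by the isomorphism of Equation~\ref{e:KK}, and on Bonkat's side by the fact that his theory restricts to $\KK$-theory on constant systems. By Higson's universal characterization of $\KK$ (\cite{Hig}), any homotopy invariant, $C^*$-stable, split exact functor on $\Csep$ factors uniquely through the canonical functor $\kappa:\Csep\to\KK$; write $\H|_{\Csep}=u\circ\kappa$ and $\tau|_{\Csep}=v\circ\kappa$, where $u$ and $v$ are the fully faithful functors realizing the two identifications with $\KK$. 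Since $i\circ\H|_{\Csep}=\tau|_{\Csep}$, the uniqueness of the factorization forces $i\circ u=v$, so on these morphism groups $i$ restricts to $v\circ u^{-1}$, a composite of two isomorphisms, and is therefore itself an isomorphism.

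Next I would carry out the two reductions. Because each $A_n$ and its suspension $\Sigma A_n$ are again single objects of $\Csep$, the naturality of $i$ (coming from $\tau=i\circ\H$) gives a morphism from Bonkat's Milnor sequence of Theorem \ref{t:Bonkat lim1} to our Milnor sequence of Theorem \ref{t:Our lim1}, that is, a commutative diagram of short exact sequences whose outer terms are the $\lim_n$ and $\lim^1_n$ of the groups $\BKK(\H\{B_m\},\H A_n)$ and $\BKK(\H\{B_m\},\H\Sigma A_n)$ and their counterparts on our side. To identify these outer terms I then invoke the colimit formulas: for a single $C\in\{A_n,\Sigma A_n\}$, Theorem \ref{t:Bonkat lim} and Theorem \ref{t:Our lim} express $\BKK(\H\{B_m\},\H C)$ and $\KKP(\LPK^{\op}\{B_m\},\LPK^{\op}C)$ as the $\colim_m$ of the corresponding groups with $B_m$ in place of $\{B_m\}$; by the base case $i$ is an isomorphism on each of these, hence on their colimit, and thus on $\lim_n$ and on $\lim^1_n$. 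Applying the (short) five lemma to the morphism of short exact sequences then shows that $i$ is an isomorphism on the middle term, which is precisely the assertion.

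The main obstacle I anticipate is not the formal reduction but the verification that $i$ genuinely carries Bonkat's short exact sequence to ours, i.e.\ that the two Milnor sequences, built by quite different means (Bonkat's analytic methods versus the model-categorical $\lim^1$-argument of Theorem \ref{t:Our lim1}), do fit into a commutative diagram. One must check that $i$ intertwines the evaluation maps into $\lim_n$ (this is immediate, these being induced by the projections $\{A_n\}\to A_n$ and $i$ being a functor) and, more delicately, that $i$ matches the two $\lim^1$-terms, where the derived nature of $\lim^1$ demands compatibility of $i$ with the underlying towers $\{\H A_n\}$ and $\{\LPK^{\op}A_n\}$ and their connecting maps. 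This should follow from the naturality built into both Milnor sequences together with the identity $\tau=i\circ\H$, but it is the step requiring the most careful bookkeeping, the more so since everything must be tracked through the $\Pro$ picture and its dual $\Ind$ description.
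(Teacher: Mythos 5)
Your proposal is correct and follows essentially the same route as the paper: both compare the two Milnor $\lim^1$-sequences (Theorems \ref{t:Bonkat lim1} and \ref{t:Our lim1}) via a commutative diagram, reduce the outer terms using the colimit formulas (Theorems \ref{t:Bonkat lim} and \ref{t:Our lim}), and conclude with the five lemma and the agreement of $\BKK$ and $\KKP$ with Kasparov's $\KK$-theory on $\Csep$. Your Higson-uniqueness argument for the base case in fact makes explicit a point the paper compresses into a single sentence, namely that the map induced by $i$ itself, and not merely some abstract identification of both sides with $\KK(A,B)$, is an isomorphism on constant systems.
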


\begin{proof}
By Theorems \ref{t:Bonkat lim1} and \ref{t:Our lim1} we get a commutative diagram
$$\xymatrix{
 \lim^1_n \BKK(\H\{ B_m\}, \H\Sigma A_n)\ar@{^{(}->}[r]\ar[d]& \BKK(\H\{ B_m\},\H\{A_n\} )\ar@{->>}[r]\ar[d]& \lim_n \BKK(\H\{ B_m\}, \H A_n)\ar[d]\\
 \lim^1_n \KKP(\LPK^{\op}\{ B_m\},  \LPK^{\op} \Sigma A_n)\ar@{^{(}->}[r] &\KKP(\LPK^{\op}\{ B_m\},\LPK^{\op} \{ A_n \})\ar@{->>}[r] & \lim_n \KKP(\LPK^{\op}\{ B_m\}, \LPK^{\op} A_n).
 }$$

By the Five Lemma it suffices to show that the extremal vertical arrows above are isomorphisms.

By Theorem \ref{t:Bonkat lim} there is a natural isomorphism
$$\colim_m \BKK(\H B_m, \H A_n)\cong \BKK(\H\{ B_m\}, \H A_n).$$

By Theorem \ref{t:Our lim} there is a natural isomorphism
$$\colim_m \KKP(\LPK^{\op} B_m, \LPK^{\op} A_n)\cong \KKP(\LPK^{\op} \{ B_m\}, \LPK^{\op} A_n).$$

Since $\KKP$ and $\BKK$ both agree with Kasparov $\KK$-theory for separable $C^*$-algebras, we conclude that the right vertical arrow in the diagram above is an isomorphism. A similar argument shows that the left vertical arrow is also an isomorphism and hence we are done.
\end{proof}

\begin{rem}
Using Theorem \ref{t:main Bonkat} and the results of \cite[Kapitel 5]{Bon}, it is possible to compare our $\K$-theory with other extensions of Kasparov's $\K$-theory considered in the literature.
\end{rem}

\appendix

\section{Model categories}\label{a:model}

In this appendix we recall the notion of model categories and some of their theory that we need in this paper. For the basic theory the reader is referred to \cite{HovBook}, \cite{Hir} and the appendix of \cite{Lur}.

\begin{defn}
A \emph{model category} is a quadruple $(\cM,\cW,\cF,\cC)$ satisfying the following:
\begin{enumerate}
\item $\cM$ is a complete and cocomplete category.
\item $\cW,\cF,\cC$ are subcategories of $\cM$ that are closed under retracts.
\item $\cW$ satisfies the two out of three property.
\item $\cC\cap \cW\subseteq{}^{\perp}\cF$  and $\cC\subseteq{}^{\perp}(\cF\cap\cW)$.
\item There exist functorial factorizations of the morphisms in $\cM$ into a map in $\cC\cap \cW$ followed by a map in $\cF$, and into a map in $\cC$ followed by a map in $\cF\cap \cW$.
\end{enumerate}
\end{defn}

\begin{defn}\label{d:combinatorial}
Let $(\cM,\cW,\cF,\cC)$ be a model category. Then the model category $\cM$ is called \emph{combinatorial} if it is locally presentable (see \cite{AR}) and there are sets $I$ and $J$ of morphisms in $\cM$ (called generating cofibrations and generating acyclic cofibrations) such that $\cF=J^{\perp}$ and $\cF\cap\cW=I^{\perp}$. In particular, a combinatorial model category is cofibrantly generated (see \cite[Definition 2.1.17]{HovBook}).
\end{defn}

\subsection{Simplicial model categories}

\begin{defn}\label{d:hom_map}
Let $\cM$ and $\cC$ be categories. An \emph{adjunction of two variables} from $\cM\times \cC$ to $\cC$ is a quintuple $(\otimes,\Map,\hom,\phi_r,\phi_l)$, where
$$(-)\otimes (-):\cM\times \cC\to \cC,$$
$$\Map(-,-):\cC^{\op}\times\cC\to \cM,$$
$$\hom(-,-):\cM^{\op}\times \cC\to \cC$$
are bifunctors, and $\phi_r,\phi_l$ are natural isomorphisms
$$\phi_r: \cC(K \otimes X,Y)\xrightarrow{\cong} \cM(K,\Map(X,Y)),$$
$$\phi_l: \cC(K \otimes X,Y)\xrightarrow{\cong} \cC(X,\hom(K,Y)).$$
In the sequel we will suppress the natural isomorphisms $\phi_r,\phi_l$ and write the adjunction of two variables just as $(\otimes,\Map,\hom).$
\end{defn}

\begin{defn}\label{d:LQB}
Let $\cM$ and $\cC$ be model categories and let $(-)\otimes (-):\cM\times \cC\to \cC$ be a bifunctor.
The bifunctor $\otimes$ is called a \emph{left Quillen bifunctor} if $\otimes$ is a part of a two variable adjunction $(\otimes,\Map,\hom)$, and for every cofibration $j:K\to L$ in $\cM$ and every cofibration $i:X\to Y$ in $\cC$ the induced map
$$K \otimes Y \coprod_{K \otimes X}L\otimes X\to L\otimes Y$$
is a cofibration (in $\cC$), which is acyclic if either $i$ or $j$ is.
\end{defn}

\begin{prop}[{\cite[Lemma 4.2.2]{HovBook}}]\label{p:Qbifunc}
Let $\cM$ and $\cC$ be model categories. Let $(\otimes,\Map,\hom)$ be a two variable adjunction.
Then the following conditions are equivalent:
\begin{enumerate}
\item The bifunctor $\otimes$ is a left Quillen bifunctor.
\item For every cofibration $j:K\to L$ in $\cM$ and every fibration $p:A\to B$ in $\cC$, the induced map:
$$\hom(L,A)\to \hom(K,A) \prod_{\hom(K,B)}\hom(L,B)$$
is a fibration (in $\cC$), which is acyclic if either $j$ or $p$ is.
\item For every cofibration $i:X\to Y$ in $\cC$ and every fibration $p:A\to B$ in $\cC$ the induced map:
$$\Map(Y,A)\to \Map(X,A) \prod_{\Map(X,B)}\Map(Y,B)$$
is a fibration (in $\cM$), which is acyclic if either $i$ or $p$ is.
\end{enumerate}
\end{prop}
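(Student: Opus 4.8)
The plan is to derive all three equivalences from a single purely formal fact about adjunctions of two variables, after which each implication becomes routine bookkeeping with the lifting-property descriptions of the model-category classes. Fix a cofibration $j:K\to L$ in $\cM$, a cofibration $i:X\to Y$ in $\cC$ and a map $p:A\to B$ in $\cC$. Write $j\mathbin{\square}i$ for the pushout-product map $K\otimes Y\coprod_{K\otimes X}L\otimes X\to L\otimes Y$ appearing in Definition \ref{d:LQB}, write $\hom^{\square}(j,p)$ for the map $\hom(L,A)\to\hom(K,A)\prod_{\hom(K,B)}\hom(L,B)$ of condition (2), and write $\Map^{\square}(i,p)$ for the map $\Map(Y,A)\to\Map(X,A)\prod_{\Map(X,B)}\Map(Y,B)$ of condition (3). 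The first step is to prove the following adjunction of lifting problems: the map $j\mathbin{\square}i$ has the left lifting property with respect to $p$ if and only if $i$ has the left lifting property with respect to $\hom^{\square}(j,p)$, if and only if $j$ has the left lifting property (in $\cM$) with respect to $\Map^{\square}(i,p)$.

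I would establish this lemma using only the natural isomorphisms $\phi_l,\phi_r$ of Definition \ref{d:hom_map} together with the universal properties of the pushout defining the source of $j\mathbin{\square}i$ and of the two pullbacks defining the targets of $\hom^{\square}(j,p)$ and $\Map^{\square}(i,p)$. The content is that a commutative square from $j\mathbin{\square}i$ to $p$ is exactly the same data, under $\phi_l$ and $\phi_r$, as a commutative square from $i$ to $\hom^{\square}(j,p)$ and as a commutative square from $j$ to $\Map^{\square}(i,p)$; and that the three adjunction isomorphisms carry a diagonal filler in any one of these squares to a diagonal filler in the other two. This matching of squares-with-lifts is the conceptual heart of the proof, and it is the step that demands the most care: one must verify that $\phi_l$ and $\phi_r$ are compatible with \emph{both} triangles of a lifting square simultaneously, so that the correspondence on squares restricts to a correspondence on solutions of the lifting problem.

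Granting the lemma, each equivalence follows formally from the fact that in any model category the cofibrations are precisely the maps with the left lifting property against all acyclic fibrations, the acyclic cofibrations those with the left lifting property against all fibrations, the fibrations those with the right lifting property against all acyclic cofibrations, and the acyclic fibrations those with the right lifting property against all cofibrations. For (1)$\Rightarrow$(2), given a cofibration $j$ and a fibration $p$ I would show $\hom^{\square}(j,p)$ is a fibration by letting $i$ range over all acyclic cofibrations of $\cC$: then $j\mathbin{\square}i$ is an acyclic cofibration by (1), hence lifts against the fibration $p$, so by the lemma $i$ lifts against $\hom^{\square}(j,p)$; as $i$ was arbitrary, $\hom^{\square}(j,p)$ is a fibration. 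If $p$ or $j$ is moreover acyclic I would instead let $i$ range over all cofibrations: then $j\mathbin{\square}i$ is a cofibration (resp.\ acyclic cofibration) and lifts against the acyclic fibration $p$ (resp.\ the fibration $p$), whence $\hom^{\square}(j,p)$ has the right lifting property against every cofibration and is an acyclic fibration. The implication (2)$\Rightarrow$(1) simply runs this correspondence backwards: to certify that $j\mathbin{\square}i$ is a (possibly acyclic) cofibration one checks lifting against an arbitrary (acyclic) fibration $p$, invokes (2) to learn that $\hom^{\square}(j,p)$ is an (acyclic) fibration, and applies the lemma. The equivalence (1)$\Leftrightarrow$(3) is the same argument with $\hom^{\square}(j,p)$ replaced by $\Map^{\square}(i,p)$, the only difference being that the transported lifting tests are now performed against $j$ inside $\cM$, so that $\Map^{\square}(i,p)$ is correctly asserted to be a fibration of $\cM$ while $i$ and $p$ remain in $\cC$.

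The main obstacle is the lemma of the first two paragraphs; everything after it is a matter of matching each target class ((acyclic) fibration, or (acyclic) cofibration) with the family of test maps that certifies it, and all of the ``acyclic if either is'' clauses drop out of the same two cases already used for the non-acyclic statements. The only point to watch in the (1)$\Leftrightarrow$(3) half is the change of ambient category for the lifting tests, which I have flagged above.
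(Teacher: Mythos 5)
The paper gives no proof of this proposition at all---it is imported verbatim from Hovey's book (Lemma 4.2.2 there)---and your argument is precisely the standard one underlying that citation: the three-way correspondence of lifting problems under the two-variable adjunction $(\otimes,\Map,\hom)$, combined with the characterizations of the (acyclic) cofibrations and (acyclic) fibrations by lifting properties, which hold in any model category by the retract argument. Your proposal is correct and takes essentially the same approach as the cited source, so there is nothing to add.
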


\begin{defn}\label{d:simplicial sets}
Let $\cS=\Set^{\Delta^{\op}}$ denote the category of simplicial sets. The category $\cS$ has a standard model structure where a map $X\to Y$ in $\cS$ is:
\begin{enumerate}
\item A cofibration, if it is one to one (at every degree).
\item A weak equivalence, if the induced map of geometric realizations $|X|\to |Y|$ is a weak equivalence of topological spaces.
\item A fibration, if it has the right lifting property with respect to all acyclic cofibrations.
\end{enumerate}
\end{defn}

\begin{defn}\label{d:simplicial}
A \emph{simplicial model category} is a model category $\cC$ together with a left Quillen bifunctor $\otimes:\cS\times \cC\to \cC$ and coherent natural isomorphisms
$$L\otimes(K\otimes X)\cong (K\times L)\otimes X,$$
$$\Delta^0\otimes X\cong X,$$
for $X$ in $\cC$ and $K,L$ in $\cS$.
\end{defn}

\subsection{Left and right proper model categories}
\begin{defn}[{\cite[Section A.2.4]{Lur}}]
A model category $\cC$ is called:
\begin{enumerate}
\item \emph{Left proper}, if for every push out square in $\cC$ of the form
\[
\xymatrix{A\ar[d]^i\ar[r]^f & B\ar[d]^j\\
C\ar[r] & D,}
\]
such that $i$ is a weak equivalence and $f$ is a cofibration, the map $j$ is also a weak equivalence.
\item \emph{Right proper}, if for every pull back square in $\cC$ of the form
\[
\xymatrix{C\ar[d]^j\ar[r] & D\ar[d]^i\\
A\ar[r]^f & B,}
\]
such that $i$ is a weak equivalence and $f$ is a fibration, the map $j$ is also a weak equivalence.
\item Proper, if it is both left and right proper.
\end{enumerate}
\end{defn}

\subsection{Pointed simplicial model categories}\label{s:cofiber}

Recall that a category is called \emph{pointed} if it has a zero object, that is, an object which is both initial and terminal.

Let $\cM$ be any pointed simplicial model category. It follows from the general theory of simplicial model categories that $\cM$ can be turned naturally into an $\cS_*$-enriched model category, where $\cS_*=(\cS_*,\wedge,S^0)$ is the symmetric monoidal model category of \emph{pointed} simplicial sets. (This just means that we replace $\cS$ by $\cS_*$ and $\times$ by $\wedge$ in Definition \ref{d:simplicial}.)
Thus, for every $A$ and $B$ in $\cM$ there is a pointed simplicial set $\Map_*(A,B)$. Actually we have:
$$\Map_*(A,B)=\Map(A,B),$$
as simplicial sets, where the distinguished morphism from $A$ to $B$ is the zero morphism, given by the composition:
$$A\to 0\to B.$$
Moreover, for every $A,B,C$ in $\cM$ the pointed enriched composition
$$\circ:\Map_*(B,C)\wedge \Map_*(A,B)\to \Map_*(A,C),$$
is just the quotient of the unpointed composition $\circ:\Map(B,C)\times \Map(A,B)\to \Map(A,C).$

Furthermore, for every object $A$ in $\cM$ and every pointed simplicial set $K$ we have the pointed left and right actions:
$$K\wedge A\in \cM\:\:,\:\:\hom_*(K,A)\in \cM.$$
It can be shown that for every (unpointed) simplicial set $K$ we have natural isomorphisms
$$K_+\wedge A\cong K\otimes A\:\:,\:\: \hom_*(K_+,A)\cong\hom(K,A),$$
where $K_+$ denotes $K$ with a disjoint basepoint.

The cofiber of a map in $\cM$ is defined to be the coequalizer of this map with the zero map. In the pointed simplicial model category $\cS_*$ we define the object $S^1$ as
$$S^1:=\cofib(\partial\Delta^1_+\hookrightarrow \Delta^1_+)\in \cS_*$$
Since $\cM$ is an $\cS_*$-enriched model category and $S^1$ is cofibrant in $\cS_*$, we have a Quillen pair
$$S^1\wedge(-):\cM\rightleftarrows\cM:\hom_*(S^1,-).$$
We define $\Sigma$ and $\Omega$ to be the adjoint pair of derived functors induced by this Quillen pair
$$\Sigma:=\mathbb{L}(S^1\wedge(-)):\Ho\cM\rightleftarrows
\Ho\cM:\mathbb{R}(\hom_*(S^1,-))=:\Omega.$$
Thus, for every object $A$ in $\cM$ we have:
$$\Sigma A\cong S^1\wedge A^c\:\:,\:\: \Omega A\cong\hom_*(S^1,A^f),$$
where $A^c$ and $A^f$ are any cofibrant and fibrant replacements for $A$ respectively.

\subsection{Left Bousfield localizations of model categories}\label{s:Bousfield_model}

Let $\cM$ be a simplicial model category. It follows that $\Ho\cM$ is naturally enriched tensored and cotensored over the monoidal category $(\Ho\cS,\times,*)$.
\begin{defn}\label{d:bousfield}
Let $T$ be a class of morphisms in $\Ho\cM$.
 \begin{enumerate}
 \item An object $W$ in $\Ho\cM$ is called $T$-\emph{local} if for every element $f:A\to B$ in $T$ the induced map $$f^*:\mathbb{R}\Map(B,W)\to \mathbb{R}\Map(A,W)$$
     is an isomorphism in $\Ho\cS$.
 \item A morphism $g:X\to Y$ in $\Ho\cM$ is called a $T$-\emph{local isomorphism} if for every $T$-local object $W$ in $\Ho\cM$ the induced map
     $$g^*:\mathbb{R}\Map(Y,W)\to \mathbb{R}\Map(X,W)$$
     is an isomorphism in $\Ho\cS$.
 \item A morphism $g:X\to Y$ in $\cM$ is called a $T$-\emph{local equivalence} if the induced morphism $X\to Y$ in $\Ho\cM$ is a $T$-local isomorphism.
 \item If the cofibrations in $\cM$ and the $T$-local equivalences constitute a model structure on $\cM$ then the \emph{left Bousfield localization} of $\cM$ with respect to $T$ is said to exist and is defined to be this model structure and denoted $\mathrm{L}_T \cM$.
 \end{enumerate}
\end{defn}

\begin{rem}
Sometimes we will apply Definition \ref{d:bousfield} and other results on Bousfield localization to a class of morphisms $T$ in $\cM$, the intended meaning being that we are considering the image of $T$ under the natural functor $\cM\to\Ho\cM$.
\end{rem}

The following proposition is shown in \cite{Hir} Propositions 3.3.5, 3.3.16, 3.4.1, 3.4.4 and Theorem 3.3.19.
\begin{prop}\label{p:bousfield}
Let $T$ be a class of morphisms in $\Ho\cM$ and suppose that the  left Bousfield localization of $\cM$ with respect to $T$ exists. Then the following hold:
    \begin{enumerate}
    \item If $\cM$ is left proper then $\mathrm{L}_T \cM$ is also left proper and the fibrant objects in $\mathrm{L}_T \cM$ are precisely the fibrant objects in $\cM$ that are $T$-local as objects in $\Ho\cM$.
    \item The left Quillen functor $\id:\cM\to \mathrm{L}_T \cM$ is initial among left Quillen functors $F:\cM\to \cN$ such that $\mathbb{L}F$ transfers morphisms in $T$ to isomorphisms in $\Ho\cN$. That is, if $F:\cM\to \cN$ is a left Quillen functor as above, then $F$ itself is also a left Quillen functor from $\mathrm{L}_T \cM$ to $\cN$.
    \end{enumerate}
\end{prop}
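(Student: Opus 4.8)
The plan is to derive all assertions from the general theory of left Bousfield localization in \cite{Hir}, treating the existence of $\mathrm{L}_T\cM$ as given by hypothesis, so that no localization-existence argument is needed. Throughout I would exploit the two defining features of $\mathrm{L}_T\cM$: it has the same underlying category and the \emph{same cofibrations} as $\cM$, while its weak equivalences are exactly the $T$-local equivalences, a class containing all weak equivalences of $\cM$.

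For part (1), left properness concerns pushouts of weak equivalences along cofibrations; since the cofibrations are unchanged and $T$-local equivalences are closed under cobase change along cofibrations, the left properness of $\cM$ passes to $\mathrm{L}_T\cM$, which is \cite[Proposition 3.4.4]{Hir}. For the description of the fibrant objects I would invoke \cite[Proposition 3.4.1]{Hir}: an object is fibrant in $\mathrm{L}_T\cM$ if and only if it is fibrant in $\cM$ and $T$-local. The substance behind this is the reformulation of $T$-locality through derived mapping spaces together with the fact that a $T$-local equivalence between $T$-local objects is already a weak equivalence of $\cM$ (\cite[Propositions 3.3.5, 3.3.16]{Hir}).

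For part (2), I would run the adjunction argument underlying the universal property \cite[Theorem 3.3.19]{Hir}. Let $F:\cM\to\cN$ be left Quillen, with right adjoint $G$, such that $\mathbb{L}F$ carries each morphism of $T$ to an isomorphism in $\Ho\cN$. Because the cofibrations of $\mathrm{L}_T\cM$ coincide with those of $\cM$, the functor $F$ already preserves cofibrations of $\mathrm{L}_T\cM$, so by the localization-specific criterion it suffices to verify that $G$ carries each fibrant object of $\cN$ to a $T$-local object of $\cM$ (which is then fibrant in $\mathrm{L}_T\cM$ by part (1)). Given a fibrant $W$ in $\cN$, the object $G(W)$ is fibrant in $\cM$ since $G$ is right Quillen there, and $T$-locality follows from the derived adjunction equivalence
$$\mathbb{R}\Map_{\cM}(A,G(W))\simeq\mathbb{R}\Map_{\cN}(\mathbb{L}F(A),W),$$
natural in $A$: for each $f:A\to B$ in $T$ the map $\mathbb{L}F(f)$ is an isomorphism in $\Ho\cN$, hence $f^{*}$ induces an equivalence on $\mathbb{R}\Map(-,G(W))$, which is precisely $T$-locality of $G(W)$. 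Initiality of $\id:\cM\to\mathrm{L}_T\cM$ among all such $F$ is then exactly the universal property of \cite[Theorem 3.3.19]{Hir}.

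The main obstacle is the locality step in part (2): one must pass from the hypothesis that $\mathbb{L}F$ inverts the \emph{generating} maps in $T$ to the conclusion that $F$ inverts \emph{all} $T$-local equivalences (equivalently, that $G(W)$ is $T$-local for every fibrant $W$). This is where the derived-mapping-space reformulation of locality and the adjunction isomorphism above do the essential work; the remaining verifications are bookkeeping about how the classes of cofibrations and weak equivalences change under localization.
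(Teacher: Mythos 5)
Your proposal is correct and takes essentially the same approach as the paper: the paper's entire proof consists of citing Propositions 3.3.5, 3.3.16, 3.4.1, 3.4.4 and Theorem 3.3.19 of \cite{Hir}, which are exactly the results you invoke and unpack. The one blemish is the parenthetical appeal to part (1) inside your argument for part (2) --- it silently imports a left-properness hypothesis that part (2) does not assume --- but it is inessential, since your stated criterion only needs $G(W)$ to be $T$-local for every fibrant $W$ in $\cN$, which your derived-adjunction argument establishes without any properness assumption.
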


We now state the main theorem in the theory of left Bousfield localizations. It is shown in \cite[Proposition A.3.7.3]{Lur} (see also \cite[Theorem 4.1.1]{Hir}).
\begin{thm}\label{t:bousfield}
Suppose that $\cM$ is left proper and combinatorial. Then the left Bousfield localization of $\cM$ with respect to any small set $T$  of morphisms in $\Ho\cM$ exists and is again combinatorial. Moreover the model category $\mathrm{L}_T \cM$ is simplicial, with the same simplicial structure as $\cM$.
\end{thm}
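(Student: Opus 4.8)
The plan is to obtain $\mathrm{L}_T\cM$ by applying Jeff Smith's recognition theorem for combinatorial model categories (see the treatment in \cite{Lur}, Appendix~A.2.6), keeping the underlying category $\cM$ and its cofibrations unchanged and taking the $T$-local equivalences of Definition~\ref{d:bousfield} as the weak equivalences. Concretely, the cofibrations of the localized structure are exactly the cofibrations of $\cM$ (so they remain generated by the generating cofibrations $I$ of the combinatorial structure), the weak equivalences are the class $\mathbf{W}_T$ of $T$-local equivalences, and the fibrations are then forced to be the maps with the right lifting property against those cofibrations that lie in $\mathbf{W}_T$. The entire burden is to check that this triple satisfies the hypotheses of Smith's theorem.

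The first preparatory step is to replace $T$ by a convenient \emph{set} $S$ of cofibrations detecting locality. Using the simplicial action I factor each member of $T$ as a cofibration followed by a trivial fibration, so as to assume $T$ consists of cofibrations $A\to B$ between cofibrant objects; then for a fibrant $W$ the map $\mathbb{R}\Map(B,W)\to\mathbb{R}\Map(A,W)$ is modeled by the fibration $\Map(B,W)\to\Map(A,W)$ (by Proposition~\ref{p:Qbifunc} applied to the original simplicial structure), which is a weak equivalence precisely when $W$ lifts against the pushout–products of $A\to B$ with the boundary inclusions. Thus setting
$$S=\left\{(A\otimes\Delta^n)\coprod_{A\otimes\partial\Delta^n}(B\otimes\partial\Delta^n)\to B\otimes\Delta^n \;\middle|\; (A\to B)\in T,\ n\geq 0\right\},$$
a fibrant object is $T$-local if and only if it is $S$-injective. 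Running the small object argument on $J\cup S$, where $J$ generates the trivial cofibrations of $\cM$, produces an accessible fibrant-replacement functor $L$ landing in $T$-local fibrant objects, and a map is a $T$-local equivalence exactly when $L$ carries it to a weak equivalence of $\cM$.

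With $S$ and $L$ in hand I would verify Smith's axioms. That $\mathbf{W}_T$ contains the weak equivalences of $\cM$, satisfies two-out-of-three and is closed under retracts is immediate from the mapping-space characterization; that every trivial fibration (a map with the right lifting property against all cofibrations) lies in $\mathbf{W}_T$ is clear since such maps are already weak equivalences of $\cM$. The two genuinely substantial points are that $\mathbf{W}_T$ is an accessible and accessibly embedded full subcategory of the arrow category $\cM^{[1]}$, and that the cofibrations lying in $\mathbf{W}_T$ are closed under pushout and transfinite composition; the latter is exactly where \emph{left properness} of $\cM$ is used. Smith's theorem then yields the model structure and simultaneously its combinatoriality (locally presentable plus cofibrantly generated by $I$ and by $J\cup S$). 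Finally, for the simplicial assertion, since both the cofibrations and the bifunctor $\otimes\colon\cS\times\cM\to\cM$ are unchanged, Proposition~\ref{p:Qbifunc} reduces the claim to checking that the pushout–product of a cofibration of $\cS$ with a trivial cofibration of $\mathrm{L}_T\cM$ is again a $T$-local equivalence; because $S$ was built through the simplicial action, $\mathbf{W}_T$ is stable under $\otimes$ with cofibrant simplicial sets, which gives the required compatibility.

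I expect the accessibility of $\mathbf{W}_T$ to be the main obstacle: establishing that the $T$-local equivalences form an accessibly embedded accessible subcategory of $\cM^{[1]}$ is the technical heart of the argument, and it is precisely the feature that combinatoriality of $\cM$ (together with $T$ being a small set) is designed to supply.
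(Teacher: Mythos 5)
Your proposal is correct and follows essentially the same route as the proof the paper relies on: the paper does not argue Theorem \ref{t:bousfield} itself but cites \cite[Proposition A.3.7.3]{Lur}, whose proof is precisely the Smith-recognition-theorem strategy you outline (cofibrations kept fixed, $T$-local equivalences as weak equivalences, reduction of $T$ to a set of cofibrations detected by pushout--products with $\partial\Delta^n\to\Delta^n$, an accessible localization functor giving accessibility of the class of $T$-local equivalences, left properness for closure of trivial cofibrations under pushout and transfinite composition, and the pushout--product check for the simplicial assertion). Nothing essential is missing from your sketch.
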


\subsection{Stabilization of model categories}\label{s:stab_model}

In this subsection we recall the notion of a stable model category and the process of stabilization in the world of model categories. We will be using results from \cite{Hov}.

A pointed simplicial model category $\cM$ is called stable if the suspension functor $\Sigma:\Ho\cM\to \Ho\cM$ is an equivalence of categories, or in other words, if the Quillen pair
$$S^1\wedge(-):\cM\rightleftarrows\cM:\hom_*(S^1,-),$$
is a Quillen equivalence.

Let $\cM$ be any pointed simplicial model category. It is desirable to have at our disposal a stable model category that is as close to $\cM$ as possible. This can be achieved using a construction of Hovey \cite{Hov}, provided $\cM$ satisfies the following conditions:
\begin{enumerate}
\item $\cM$ is left proper.
\item $\cM$ is combinatorial.
\item The domains of the generating cofibrations of $\cM$ can be taken to be cofibrant.
\end{enumerate}
(The results in \cite{Hov} are stated under the assumption that $\cM$ is {\em cellular} but according to the results in \cite[Section A.3.7]{Lur}, it suffices that it is combinatorial.) In the notation of \cite{Hov} the category that we need is $\SpN(\cM,S^1)$, but we denote it here simply by $\SpN(\cM)$. We sketch the construction of $\SpN(\cM)$ and the natural functor $G_0:\cM\to \SpN(\cM)$.

An object of $\SpN(\cM)$ is a sequence $\{X_0,X_1,\dots\}$ of objects of $\cM$ together with structure maps $S^1\wedge X_n\map X_{n+1}$.
A morphism $\{X_0,X_1,\dots\}\to \{Y_0,Y_1,\dots\}$ in $\SpN(\cM)$ consists of a sequence of morphisms $X_n\to Y_n$ preserving the structure maps.

We now define a model structure on $\SpN(\cM)$ which is called the stable model structure. We begin with the projective model structure on $\SpN(\cM)$ in which a morphism $\{X_0,X_1,\dots\}\to \{Y_0,Y_1,\dots\}$ is a weak equivalence or fibration if $X_n\to Y_n$ is a weak equivalence or fibration for every $n$.

An object $\{X_0,X_1,\dots\}$ of $\SpN(\cM)$ is called an $\Omega$-spectrum if for every $n$ the map $X_n\map \hom_*(S^1,X_{n+1})$, adjoint to the structure map $S^1\wedge X_n\map X_{n+1}$, is a weak equivalence.

The stable structure on $\SpN(\cM)$ is obtained from the projective structure by a process of left Bousfield localization (see Definition \ref{d:bousfield}). We take the left Bousfield localization in such a way that the fibrant objects in the localized model structure are precisely the projective fibrant objects that are also $\Omega$-spectra.

For every $n\geq 0$ we have a Quillen adjunction
$$G_n:\cM\rightleftarrows \SpN(\cM):Ev_n,$$
where $Ev_n$ is the evaluation functor sending the object $\{X_0,X_1,\dots\}$ to $X_n$, and $G_n$ is its left adjoint.
The functor $G_0$
sends $X$ to the sequence of objects $\{X,S^1\wedge X,\cdots,  S^n\wedge X,\dots \}.$

The following Proposition follows from \cite[Theorem 6.3 and the paragraph before it]{Hov}, \cite[Corollary 6.5]{Hov} and \cite[Theorem 10.3]{Hov}.
\begin{thm}\label{t:spectra}
The model category $\SpN(\cM)$ is stable, left proper, simplicial and combinatorial.
The functors $G_n:\cM\map\SpN(\cM)$ are left Quillen and preserve the simplicial action up to a natural isomorphism.
\end{thm}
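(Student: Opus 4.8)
The plan is to assemble the statement from the results of Hovey in \cite{Hov}, after accounting for the passage from cellular to combinatorial model categories. Recall that $\cM$ is assumed left proper, combinatorial, and to have cofibrant domains for a chosen set of generating cofibrations; these are precisely (the combinatorial analogues of) the hypotheses under which Hovey's stabilization machine operates. The results in \cite{Hov} are phrased for cellular model categories, but as noted in the text and justified by \cite[Section A.3.7]{Lur}, the existence and left properness of the relevant Bousfield localizations go through for combinatorial model categories; so throughout I would silently replace ``cellular'' by ``combinatorial.''

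First I would treat the projective model structure on $\SpN(\cM)$. Since $\cM$ is combinatorial, the category $\SpN(\cM)$ of sequences $\{X_n\}$ with structure maps $S^1\wedge X_n\to X_{n+1}$ is again locally presentable, and the projective (levelwise) model structure is combinatorial, left proper, and simplicial, the simplicial action being defined levelwise. For this projective structure each adjunction $G_n\dashv Ev_n$ is a Quillen pair: $Ev_n$ preserves levelwise fibrations and weak equivalences by definition, so $G_n$ is left Quillen, and $G_n$ manifestly commutes with the levelwise simplicial action up to natural isomorphism.

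Next I would pass to the stable model structure. By construction (the paragraph preceding \cite[Theorem 6.3]{Hov} together with that theorem) this is the left Bousfield localization of the projective structure at the set of ``stabilization'' maps $G_{n+1}(S^1\wedge C)\to G_n(C)$, with $C$ ranging over the cofibrant domains and codomains of the generating cofibrations of $\cM$; the fibrant objects of the localization are exactly the projectively fibrant $\Omega$-spectra. Since the projective structure is left proper and combinatorial, Theorem \ref{t:bousfield} guarantees that this localization exists, is combinatorial, and is simplicial with the same simplicial structure, while Proposition \ref{p:bousfield} guarantees that it remains left proper. Because left Bousfield localization preserves the class of cofibrations and only enlarges the class of weak equivalences, each $G_n$, being left Quillen for the projective structure, remains left Quillen for the stable structure (alternatively one invokes the universal property in Proposition \ref{p:bousfield}), and the isomorphism $G_n(K\otimes X)\cong K\otimes G_n(X)$ is unaffected by the localization. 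This establishes every assertion except stability.

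The genuinely substantial point, and the one I expect to be the main obstacle, is stability: that in the localized structure the Quillen pair $(S^1\wedge(-),\hom_*(S^1,-))$ is a Quillen equivalence, equivalently that $\Sigma$ induces an equivalence on $\Ho\SpN(\cM)$. This is the content of \cite[Theorem 10.3]{Hov}, with \cite[Corollary 6.5]{Hov} supplying the identification of stable fibrant objects and the compatibility needed to apply it. The argument there is not formal: one must check that the shift functor models the inverse of suspension on $\Omega$-spectra and that the cofibrant-domain hypothesis forces the comparison maps to be stable equivalences. I would therefore invoke \cite[Theorem 10.3]{Hov} directly, the only verification required on our side being that the three standing assumptions on $\cM$ are exactly those Hovey needs, which is immediate.
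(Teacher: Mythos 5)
Your proposal is correct and takes essentially the same route as the paper: the paper's proof is precisely the citation of \cite[Theorem 6.3 and the paragraph before it]{Hov}, \cite[Corollary 6.5]{Hov} and \cite[Theorem 10.3]{Hov}, with the cellular-to-combinatorial replacement justified by \cite[Section A.3.7]{Lur}, exactly as you do. Your additional unpacking (projective structure first, then the Bousfield localization via Theorem \ref{t:bousfield} and Proposition \ref{p:bousfield}, then stability from Hovey) is a correct expansion of that same argument rather than a different approach.
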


\section{$\infty$-categories}\label{a:infinity}

In this appendix we recall the notion of ${\infty}$-categories and some of their theory that we need in this paper. Our approach is based on quasi-categories, and  the reader is referred to \cite{Lur} for the basic theory.

\begin{defn}[Joyal, Lurie]
An \emph{$\infty$-category} is a simplicial set $\cC$ satisfying the right lifting property with respect to the maps $\Lambda^n_i \to \Delta^n$ for $0 < i < n$ (where $\Lambda^n_i$ is the simplicial set obtained by removing from $\partial\Delta^n$ the $i$'th face). If $\cC$ and $\cD$ are $\infty$-categories, then an $\infty$-functor $\cC\to\cD$ is just a simplicial set map. In fact, we have an $\infty$-category of $\infty$-functors from $\cC$ to $\cD$ denoted $\Fun(\cC,\cD)$ and defined by
$$\Fun(\cC,\cD)_n:=\Hom_\cS(\Delta^n\times\cC,\cD).$$
\end{defn}

\subsection{Relative categories and their associated ${\infty}$-categories}\label{ss:infinity}

In this subsection we will recall the notion of \emph{${\infty}$-localization} which associates an underlying ${\infty}$-category to any relative category.  The material here is based on \cite{Hin}.

\begin{defn}\label{d:rel}
A \emph{relative category} is a category $\cC$ equipped with a subcategory
\[\cW \subseteq \cC\]
containing all the identities. We will refer to the maps in $\cW$ as \emph{weak equivalences}.
\end{defn}

Given a relative category $(\cC,\cW)$ one may associate to it an ${\infty}$-category $\cC_{\infty} = \cC[\cW^{-1}]$, equipped with a map $\cC \xrightarrow{} \cC_{\infty}$, which is characterized by the following universal property: for every ${\infty}$-category $\cD$, the natural map
$$ \Fun(\cC_{\infty},\cD) \xrightarrow{} \Fun(\cC,\cD) $$
is fully-faithful, and its essential image is spanned by those functors $\cC \xrightarrow{} \cD$ which send $\cW$ to equivalences. The ${\infty}$-category $\cC_{\infty}$ is called the \emph{${\infty}$-localization} of $\cC$ with respect to $\cW$. In this paper we will also refer to $\cC_{\infty}$ as the \emph{underlying ${\infty}$-category of $\cC$}, or the ${\infty}$-category \emph{modelled by $\cC$}. We note that this notation and terminology is slightly abusive, as it makes no direct reference to $\cW$.

The ${\infty}$-category $\cC_{\infty}$ may be constructed in one of the following equivalent ways:
\begin{enumerate}
\item
One may construct the \emph{Hammock localization} of $\cC$ with respect to $\cW$ (see~\cite{DK}), and obtain a simplicial category $L^\H(\cC,\cW)$. The ${\infty}$-category $\cC_{\infty}$ can then be obtained by taking the coherent nerve of any fibrant model of $L^\H(\cC,\cW)$ (with respect to the Bergner model structure).
\item
One may consider the \emph{marked simplicial set} $\Ne_+(\cC,\cW) = (\Ne(\cC),\cW)$, where $\Ne$ denotes the nerve functor. The ${\infty}$-category $\cC_{\infty}$ can then be obtained by taking the underlying simplicial set of any fibrant model of $\Ne_+(\cC,\cW)$ (with respect to the Cartesian model structure, see~\cite[Chapter 3]{Lur}).
\end{enumerate}

\subsection{Stabilization of $\infty$-categories}\label{s:stab_infinity}
In this subsection we consider the notion of stabilization of $\infty$-categories.
The following is based on the very accessible presentation of Harpaz \cite{Har}. For a more detailed account see \cite{Lur2}.

Let $\Catfinc$ denote the (big) $\infty$-category of pointed finitely cocomplete small $\infty$-categories and finite-colimit-preserving functors between them. If $\cC$ is an object in $\Catfinc$ then we can define the \emph{suspension functor} on $\cC$
$$\Sigma_\cC:\cC\to \cC$$
by the formula
$$\Sigma_\cC(X):=*\coprod_X *.$$

We define $\Catex$ to be the full subcategory of
$\Catfinc$ spanned by the objects where the suspension functor is an equivalence.
$\Catex$ is called the $\infty$-category of small stable $\infty$-categories and exact functors between them.

Let $\cC$ be an object in $\Catfinc$. We denote by $\SW(\cC)$ the colimit of the sequence
$$\cC\xrightarrow{\Sigma_\cC}\cC\xrightarrow{\Sigma_\cC}\cdots$$
in the $\infty$-category $\Catfinc$. In fact, $\SW(\cC)$ is also the colimit of the sequence above in $\Catinfty$, which is the $\infty$-category of all small $\infty$-categories and all $\infty$-functors between them. Thus, the objects
of $\SW(\cC)$ are pairs $(X,n)$ where $X\in\cC$ and $n\in \mathbb{N}$, and the mapping spaces
are given by
$$\Map_{\SW(\cC)}((X,n),(Y,m)) = \colim_k\Map_\cC(\Sigma_\cC^{k-n} X,\Sigma_\cC^{k-m} Y),$$
where the colimit is taken in the $\infty$-category of spaces.
This construction will yield a left adjoint to the inclusion $\Catex\to\Catfinc$.
More precisely, we have a unit map
$$\Sigma^{\infty}_\cC:\cC\to \SW(\cC)$$
given by $X\mapsto (X,0)$, which satisfies the following universal property: For every stable $\infty$-category $\cD$, pre-composition with $\Sigma^{\infty}_\cC$
induces an equivalence of $\infty$-categories
$$\Funex(\SW(\cC),\cD)\to\Funfinc(\cC,\cD).$$

Let $\Catfinl$ denote the (big) $\infty$-category of pointed finitely complete small $\infty$-categories and finite-limit-preserving functors between them. If $\cC$ is an object in $\Catfinl$ then we can define the \emph{loop functor} on $\cC$
$$\Omega_\cC:\cC\to \cC$$
by the formula
$$\Omega_\cC(X):=*\prod_X *.$$

It can be shown that the $\infty$-category $\Catex$ is equivalent to the full subcategory of
$\Catfinl$ spanned by the objects where the loop functor is an equivalence.

We will denote by $\Sp(\cC)$ the limit of the tower
$$\cC\xleftarrow{\Omega_\cC}\cC\xleftarrow{\Omega_\cC}\cdots$$
in the $\infty$-category $\Catfinl$. In fact, $\Sp(\cC)$ is also the limit in $\Catinfty$, namely, an object
of $\Sp(\cC)$ is given by a sequence $\{X_n \}$ of objects of $\cC$ together with equivalences
$X_n\simeq\Omega_\cC X_{n+1}$ and maps are given by compatible families of maps.

This construction will yield a right adjoint to the inclusion $\Catex\to\Catfinl$.
More precisely, we have a counit map
$$\Omega^{\infty}_\cC:\Sp(\cC)\to \cC$$
given by  $\{X_n \}\mapsto X_0$, which satisfies the following universal property: For every stable $\infty$-category $\cD$, composition with $\Omega^{\infty}_\cC$
induces an equivalence of $\infty$-categories
$$\Funex(\cD,\Sp(\cC))\to\Funfinl(\cD,\cC).$$

We now discuss the process of stabilization in the context of presentable $\infty$-categories. Let $\Prl$ denote the (big) $\infty$-category of pointed presentable $\infty$-categories and left functors between them (i.e. functors which admit right adjoints) and $\Prr$ the $\infty$-category of pointed presentable $\infty$-categories and right functors between
them (i.e. functors which admit left adjoints). The categories $\Prl$
and $\Prr$ are naturally opposite to each other.
The adjoint functor theorem for presentable $\infty$-categories tells us that a
functor $f:\cC\to\cD$ between presentable $\infty$-categories is a left functor if and
only if it preserves all colimits and is a right functor if and only if it is accessible
and preserves all limits. In particular, if $\cC$ and $\cD$ are stable presentable $\infty$-categories
then any left functor between them and any right functor between them is
exact. We will denote by $\Prlex\subseteq \Prl$ the full subcategory spanned by the stable $\infty$-categories and similarly by $\Prrex\subseteq \Prr$.

Observe that for a pointed presentable $\infty$-category $\cC$ the following are equivalent:
\begin{enumerate}
\item $\cC$ is stable.
\item $\Sigma_\cC$ is an equivalence.
\item $\Omega_\cC$ is an equivalence.
\end{enumerate}
We thus see that in order to perform the stabilization process inside the world of pointed presentable $\infty$-categories one just needs to invert
either the suspension or the loop functor. As above, this can be done from the
left or from the right. However, since $\Prl$ and $\Prr$
are opposite to each other,
it will be enough to understand just one of these procedures. In this case the
right option has an advantage, and that is that limits in $\Prr$
can be computed
just as limits in $\Catinfty$ (where the same is not true for colimits in $\Prl$).

Now the functor $\Omega_\cC$ has a left adjoint $\Sigma_\cC$, so we see that $\Omega_\cC$ is a right functor,
i.e., a legitimate morphism in $\Prr$. As above, we can invert it by taking the
inverse limit of the tower
$$\cC\xleftarrow{\Omega_\cC}\cC\xleftarrow{\Omega_\cC}\cdots$$
in the $\infty$-category $\Prr$. Fortunately, this procedure is the same as computing
the limit in $\Catinfty$, i.e., it will coincide with $\Sp(\cC)$ described above. However,
we are now guaranteed that $\Sp(\cC)$ will be a presentable $\infty$-category and that
the projection map
$$\Omega^{\infty}_\cC:\Sp(\cC)\to \cC$$
will be a right functor of presentable $\infty$-categories. Now if $\cD$ is any stable
presentable $\infty$-category then composition with $\Omega^{\infty}_\cC$
induces an equivalence of
$\infty$-categories
$$\Funr(\cD,\Sp(\cC))\to \Funr (\cD,\cC)$$
The duality between $\Prr$ and $\Prl$ means that we can automatically get a dual result with no extra work. Namely, the left adjoint
$$\Sigma^{\infty}_\cC:\cC\to\Sp(\cC)$$
of $\Omega^{\infty}_\cC$
will also exhibit $\Sp(\cC)$ as a stabilization of $\cC$ from the left in the $\infty$-category $\Prl$. In other words, if $\cD$ is any stable presentable $\infty$-category then
pre-composition with $\Sigma^{\infty}_\cC$ induces an equivalence of $\infty$-categories
$$\Funl(\Sp(\cC),\cD)\to \Funl(\cC,\cD).$$

Now suppose that $\cC\in\Prl$ is also compactly generated, i.e. it
is of the form $\Ind(\cC_0 )$ where $\cC_0$ is a small pointed ${\infty}$-category with finite colimits. Then one can attempt to left-stabilize $\cC$ by first left-stabilizing $\cC_0$ using the construction $\SW(\cC_0)$ considered previously, and then considering its ind-completion $\Ind(\SW(\cC_0))$. This construction will yield again a stable presentable $\infty$-category satisfying the same universal property as $\Sp(\cC)$. We will hence deduce that there is a natural equivalence
$$\Ind(\SW(\cC_0 ))\simeq \Sp(\cC).$$
Note that in the equivalence above we are referring to the $\infty$-categorical construction of the ind-category (see \cite[Section 5.3]{Lur}).

We now wish to connect the $\infty$-categorical stabilization presented above to the model categorical stabilization presented in Appendix \ref{s:stab_model}.

Let $\cM$ be a left proper combinatorial pointed simplicial model category such that the domains of the generating cofibrations of $\cM$ can be taken to be cofibrant. Since $\cM$ is combinatorial, it follows from \cite{Lur} that $\cM_\infty$ is a presentable $\infty$-category.

As explained in Appendix \ref{s:cofiber} we have a Quillen pair
$$S^1\wedge(-):\cM\rightleftarrows\cM:\hom_*(S^1,-).$$
By \cite[Theorem 2.1]{MG} this Quillen pair induces an adjoint pair of $\infty$-categories
$$\Sigma:=\mathbb{L}(S^1\wedge(-)):\cM_{\infty}\rightleftarrows
\cM_{\infty}:\mathbb{R}(\hom_*(S^1,-))=:\Omega.$$
(The adjoint pair of usual categories
$$\Sigma:\Ho\cM\rightleftarrows\Ho\cM:\Omega$$
considered in Appendix \ref{s:cofiber}, is obtained from the $\infty$-categorical adjoint pair by passing to the homotopy categories.)
Then we have $\Sigma=\Sigma_{\cM_{\infty}}$ and $\Omega=\Omega_{\cM_{\infty}}$. In particular, we see that $\cM$ is a stable model category iff $\cM_{\infty}$ is a stable $\infty$-category.

According to \cite[Proposition 4.15]{Rob}, we have a natural equivalence of $\infty$-categories
$$\SpN(\cM)_\infty\simeq \Sp(\cM_\infty).$$
Moreover, if we consider the Quillen adjunction
$$G_0:\cM\rightleftarrows \SpN(\cM):Ev_0,$$
then the adjunction between the underlying $\infty$-categories given by \cite[Theorem 2.1]{MG}
$$\Sigma^{\infty}:=\mathbb{L}G_0:\cM_\infty\rightleftarrows
\SpN(\cM)_\infty:\mathbb{R}Ev_0=:\Omega^{\infty},$$
is equivalent to the adjunction $(\Sigma^{\infty}_{\cM_\infty},\Omega^{\infty}_{\cM_\infty})$ defined above, under this natural equivalence.


\bibliographystyle{abbrv}
\bibliography{ourBib}

\end{document}